\setlist[itemize]{topsep=0ex,itemsep=0ex,parsep=0.3ex}
\setlist[enumerate]{topsep=0ex,itemsep=0ex,parsep=0.3ex}
\crefname{lem}{Lemma}{Lemmas}
\crefname{thm}{Theorem}{Theorems}
\crefname{cor}{Corollary}{Corollaries}
\newcommand{\defn}[1]{\textcolor{Maroon}{\emph{#1}}}
\def\NAT@spacechar{~}
\renewcommand{\baselinestretch}{1.1}
\DeclarePairedDelimiter{\floor}{\lfloor}{\rfloor}
\DeclarePairedDelimiter{\abs}{\lvert}{\rvert}
\DeclarePairedDelimiter{\set}{\{}{\}} 
\renewcommand{\epsilon}{\varepsilon}
\renewcommand{\emptyset}{\varnothing}
\renewcommand{\geq}{\geqslant}
\renewcommand{\leq}{\leqslant}
\DeclareMathOperator{\dist}{dist}
\DeclareMathOperator{\tw}{tw}
\DeclareMathOperator{\stw}{stw}
\DeclareMathOperator{\ltw}{ltw}
\DeclareMathOperator{\rtw}{rtw}
\newcommand{\JJ}{\mathcal{J}}
\newcommand{\BB}{\mathcal{B}}
\newcommand{\FF}{\mathcal{F}}
\newcommand{\GG}{\mathcal{G}}
\newcommand{\LL}{\mathcal{L}}
\newcommand{\NN}{\mathbb{N}}
\newcommand{\OO}{\mathcal{O}}
\newcommand{\WW}{\mathcal{W}}
\renewcommand{\thefootnote}{\fnsymbol{footnote}}
\theoremstyle{plain}
\newtheorem{thm}{Theorem}
\newtheorem{lem}[thm]{Lemma}
\newtheorem{cor}[thm]{Corollary}
\newtheorem{obs}[thm]{Observation}
\crefname{obs}{Observation}{Observations}
\theoremstyle{definition}
\begin{document}
\title{\bf\Large Product structure of graphs with an excluded minor}
\author{%
Freddie Illingworth\footnotemark[2] \qquad 
Alex Scott\footnotemark[2] \qquad David~R.~Wood\footnotemark[3]}
\date{}
\footnotetext[0]{\today. MSC classification: 05C83  	graph minors. }
\footnotetext[2]{Mathematical Institute, University of Oxford, United Kingdom (\texttt{\{illingworth,scott\}\allowbreak@maths\allowbreak.ox\allowbreak.ac\allowbreak.uk}). Research supported by EPSRC grant EP/V007327/1.}
\footnotetext[3]{\,School of Mathematics, Monash   University, Melbourne, Australia  (\texttt{david.wood@monash.edu}). Research supported by the Australian Research Council, and by a Visiting Research Fellowship of Merton College, University of Oxford.}
\sloppy
\maketitle

\begin{abstract}
This paper shows that $K_t$-minor-free (and $K_{s, t}$-minor-free) graphs $G$ are subgraphs of products of a tree-like graph $H$ (of bounded treewidth) and a complete graph $K_m$. Our results include optimal bounds on the treewidth of $H$ and optimal bounds (to within a constant factor) on $m$ in terms of the number of vertices of $G$ and the treewidth of $G$. These results follow from a more general theorem whose corollaries include a strengthening of the celebrated separator theorem of Alon, Seymour, and Thomas [\emph{J.~Amer.~Math.~Soc.}~1990] and the Planar Graph Product Structure Theorem of Dujmovi\'c~\emph{et~al.}~[\emph{J.~ACM}~2020]. \end{abstract}

\renewcommand{\thefootnote}{\arabic{footnote}}

\section{\large Introduction}\label{Intro}

Graph Product Structure Theory is a body of research which describes complicated graphs as subgraphs of products of simpler graphs.  Typically, the simpler graphs are tree-like, in the sense that they have bounded treewidth, which is the standard measure of how similar a graph is to a tree. (We postpone the definition of treewidth and other standard graph-theoretic concepts until \cref{sec:prelim}.)\ This area has recently received a lot of attention~\citep{DHHW,BDJMW22,DJMMUW20,UTW,DMW,HW21b,DHJLW21,BDHK,HJMW,UWY22,Wood22} with highlights including the Planar Graph Product Structure Theorem of \citet{DJMMUW20}; see \cref{PGPST} below. 


Our main contribution is a powerful general result, \cref{JJstMain}, that essentially converts a tree-decomposition of a graph excluding a particular minor into a product that inherits some of the properties of the decomposition. Its applications include a strengthening of the celebrated Alon-Seymour-Thomas separator theorem as well as the Planar Graph Product Structure Theorem.

Throughout the paper we work with strong products of graphs. The \defn{strong product} $A \boxtimes B$ of graphs $A$ and $B$ has vertex-set $V(A) \times V(B)$, where distinct vertices $(v,x), (w,y)$ are adjacent if $v = w$ and $xy \in E(B)$, or $x = y$ and $vw \in E(A)$, or $vw \in E(A)$ and $xy \in E(B)$. This paper focuses on products of the form $H \boxtimes K_m$ and $H \boxtimes P \boxtimes K_m$ where $H$ is a graph of bounded treewidth, $P$ is a path and $m$ is some function of the original graph. An alternative view of the product $H \boxtimes K_m$ is as a `blow-up' of the graph $H$, obtained by replacing each vertex of $H$ be a copy of the complete graph $K_m$ and each edge of $H$ by a copy of the complete bipartite graph $K_{m, m}$.

In one of the cornerstone results of Graph Minor Theory, \citet{AST90} proved that every $K_t$-minor-free graph has a balanced separator of size at most $t^{3/2} n^{1/2}$. In fact, they proved the following stronger result.\footnote{The balanced separator result follows from \cref{AST} and the separator lemma of \citet[(2.6)]{RS-II}.}

\begin{thm}[\citep{AST90}] 
\label{AST}
Every $n$-vertex $K_t$-minor-free graph $G$ has treewidth $\tw(G)< t^{3/2} n^{1/2}$. 
\end{thm}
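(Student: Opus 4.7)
I would follow the classical route via the haven characterization of treewidth (Seymour--Thomas): $\tw(G) < k$ if and only if $G$ admits no \emph{haven of order $k+1$}, i.e., no function $\beta$ that assigns to every $X \subseteq V(G)$ with $|X| \le k$ a component $\beta(X)$ of $G - X$ satisfying $\beta(X') \subseteq \beta(X)$ whenever $X \subseteq X'$. It therefore suffices to prove the contrapositive: if $G$ is an $n$-vertex graph admitting a haven $\beta$ of order $k := \lceil t^{3/2}n^{1/2}\rceil$, then $G$ contains a $K_t$-minor, contradicting the hypothesis.

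Given such a haven $\beta$, I would construct $t$ pairwise disjoint, connected branch sets $V_1, \ldots, V_t$ with an edge between every pair, greedily. After producing $V_1, \ldots, V_j$, set $X_j := V_1 \cup \cdots \cup V_j$ and use $\beta(X_j)$ to locate the next starting vertex in a remote component; grow $V_{j+1}$ as a BFS tree of radius $\approx \sqrt{n/t}$ inside $\beta(X_j)$, and attach short paths back to each earlier $V_i$ to supply the required pairwise edges. Each such connecting path is routed via a further application of $\beta$ (now to a slightly enlarged argument), which keeps it disjoint from everything previously constructed while guaranteeing that it reaches $V_i$.

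The main obstacle is the quantitative bookkeeping. I would aim for branch sets of radius $r \approx (n/t)^{1/2}$ and connecting paths of similar length, so that the total number of committed vertices is at most $t \cdot r + \binom{t}{2} \cdot r = O(t^{3/2}n^{1/2})$, matching the haven order. The existence of short BFS paths at each step follows from a density lemma: in any connected subgraph of $G$ supplied by $\beta$, BFS balls of radius $\sqrt{n/t}$ must contain at least $\sqrt{nt}$ vertices, for otherwise iterating the ball construction produces a set of more than $n$ vertices. Getting the constant $t^{3/2}$ exactly requires a sharp choice of $r$ and a careful accounting of the auxiliary vertices consumed by the connecting paths; this calibration is the technical heart of the argument, and matches the threshold $k = t^{3/2}n^{1/2}$ precisely because the minimax $t \cdot r + t^2/r$ is optimized at $r = \sqrt{n/t}$.
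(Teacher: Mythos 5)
Your approach is genuinely different from the one taken in this paper: the paper does not prove \cref{AST} at all (it cites \citep{AST90}) but instead derives it as an immediate corollary of the strictly stronger \cref{KtMinorFreeSqrt}(a), which is proved by an inductive clique--sum construction that \emph{avoids} havens and treewidth duality entirely. Your plan---refute a haven of order $\lceil t^{3/2}n^{1/2}\rceil$ by greedily building a $K_t$-model---is in spirit the route of the original \citep{AST90} paper, and the final bookkeeping ($t\cdot r+\binom{t}{2}r$ with $r\approx\sqrt{n/t}$) is the right budget, so the high-level plan is sound.

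However, there is a real gap in the step that does all the work. First, the ``density lemma'' you state---that in a component supplied by $\beta$ the BFS ball of radius $\sqrt{n/t}$ contains at least $\sqrt{nt}$ vertices, else ``iterating the ball construction produces a set of more than $n$ vertices''---does not hold as written and does not follow from the haven axioms: a haven forbids small \emph{separators that kill every large component}, but a sparse BFS level inside $\beta(X_j)$ is not automatically such a separator (the haven may simply nominate the ball side), and the ``iteration'' can only ever cover at most $n$ vertices, so no contradiction materialises. Second, even granting short connecting paths to each earlier $N(V_i)$, you have not argued that the $j$ paths leaving $V_{j+1}$ can be chosen internally disjoint from one another and from the BFS tree; ``a further application of $\beta$'' does not route paths, it only designates a component. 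Both problems are exactly what the Alon--Seymour--Thomas hitting-set lemma (\cref{FindTree}, i.e.\ (1.2) of \citep{AST90}) is designed to solve in one stroke: applied in $G-X_j$ to the sets $A_i=N(V_i)\setminus X_j$ with threshold $x$, it either returns a single tree of at most $x$ vertices meeting every $A_i$ (which you take as $V_{j+1}$, automatically connected, small, and disjoint from $X_j$), or a hitting set $Y$ of size at most $(j-1)n/x$; in the latter case $X_j\cup Y$ is a set of fewer than $\lceil t^{3/2}n^{1/2}\rceil$ vertices for which no component of $G-(X_j\cup Y)$ can be $\beta$'s nominee, contradicting the haven. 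Replacing your density lemma and per-pair path routing with a single invocation of \cref{FindTree} at each greedy step closes the gap and recovers the $t^{3/2}n^{1/2}$ bound; without it the construction does not go through.
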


Our first result is the following strengthening of \cref{AST} that describes $K_t$-minor-free graphs as blow-ups of simpler graphs, namely graphs with bounded treewidth.

\begin{thm}
\label{KtMinorFreeSqrt}
For any integer $t \geq 4$, every $n$-vertex $K_t$-minor-free graph $G$ is
\begin{enumerate}[label = \textnormal{(\alph*)}]
    \item isomorphic to a subgraph of $H \boxtimes K_{\floor{m}}$, where $\tw(H) \leq t - 1$ and $m \coloneqq \sqrt{(t - 3)n}$\textnormal{;}
    \item isomorphic to a subgraph of $H \boxtimes K_{\floor{m}}$, where $\tw(H) \leq t - 2$ and $m \coloneqq 2 \sqrt{(t - 3) n}$.
\end{enumerate}
\end{thm}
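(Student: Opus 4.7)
The plan is to apply the paper's main general theorem to a tree-decomposition of $G$, taking $K_t$ as the excluded minor. Recall that $G \subseteq H \boxtimes K_{\lfloor m \rfloor}$ is equivalent to exhibiting a partition $\{V_x : x \in V(H)\}$ of $V(G)$ with $\abs{V_x} \le m$ for all $x$, in which every edge of $G$ lies either inside some $V_x$ or between $V_x, V_y$ with $xy \in E(H)$; combined with $\tw(H) \le t-1$ (resp.\ $t-2$), this yields the claimed product structure.

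First, I would obtain a tree-decomposition of $G$ whose bag sizes are bounded by $tm = t\sqrt{(t-3)n}$ for part~(a), or by $(t-1)(2m) = 2(t-1)\sqrt{(t-3)n}$ for part~(b). Note that \cref{AST} on its own gives only $\tw(G) < t^{3/2}\sqrt{n}$, which is larger than $tm$, so neither bound is a direct corollary of AST: one has to sharpen or replace the AST argument, or produce the tree-decomposition in tandem with the partition. This step is where I expect the general theorem to do its work: from the $K_t$-minor-free hypothesis it should produce both a tree-decomposition of the required width and a consistent grouping of its bags.

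Next, the grouping itself: partition each bag of this tree-decomposition into at most $t$ groups of size $\le m$ (for (a)) or at most $t-1$ groups of size $\le 2m$ (for (b)), consistently so that a vertex $v$ appearing in several bags is assigned the same group throughout. Letting $V(H)$ be the set of all groups and joining two groups by an edge whenever they touch in $G$ or co-occur in a bag gives a graph $H$ that inherits a tree-decomposition of width $\le t-1$ (resp.\ $\le t-2$) from the tree-decomposition of $G$. The inclusion $G \subseteq H \boxtimes K_{\lfloor m \rfloor}$ is then immediate by construction.

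The main obstacle is precisely the paper's main general theorem: executing the simultaneous tree-decomposition-plus-grouping with sharp constants. Achieving $m = \sqrt{(t-3)n}$ rather than a weaker $\sqrt{tn}$ or $\sqrt{t^3 n}$ requires an extremal argument showing that any attempted partition violating the bound would already witness a $K_t$-minor in $G$, so that the $t-3$ really comes from ``$K_t$ has $t$ vertices, minus a small correction for overlap between adjacent bags.'' Once that sharp general theorem is in hand, parts (a) and (b) become two instantiations of the same machinery with different splits of the trade-off between the number of groups per bag and the permissible group size.
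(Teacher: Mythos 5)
Your plan is to route the proof through a tree-decomposition of $G$ and then group vertices within bags. That is not what the paper does for the $\sqrt{n}$ bound, and the approach as written has a genuine gap.

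First, the paper's Theorem~\ref{JJstMain} (the ``main general theorem'' you likely have in mind) takes a tree-decomposition as \emph{input} and produces a partition whose $\WW$-width is at most $t-1$; it does not produce a new tree-decomposition, and a partition of $\WW$-width $1$ has parts of size up to the bag size. Since by \cref{AST} we only know $\tw(G) < t^{3/2}\sqrt{n}$, this route gives partition width $\Omega(t^{3/2}\sqrt{n})$ at best, not $\sqrt{(t-3)n}$. You notice this yourself and hope the general theorem ``produces both a tree-decomposition of the required width and a consistent grouping of its bags,'' but no such statement exists in the paper, and this is precisely where your argument stops rather than proceeds. Moreover, \cref{KtMinorFreeSqrt}(a) actually yields $\tw(G) < t\sqrt{(t-3)n}$, slightly sharper than AST; an argument that treats AST (or any width bound of that order) as a black box cannot also reprove it.

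Second, your ``group each bag'' construction is not well-defined. If parts are required to fit inside single bags (so that bags of $G$ project to $\le t$ parts, giving $\tw(H)\le t-1$), then a vertex $v$ occurring in bags $W_x$ and $W_y$ must have its part contained in just one of them, say $W_x$. For an edge $vw$ where $w$'s part lives only in $W_y$, the parts of $v$ and $w$ never co-occur in a projected bag, so the collection $\set{\text{parts of } W_z \colon z\in V(T)}$ fails to be a tree-decomposition of $H$. The consistency requirement you wave at is exactly the hard part, and it is not solvable by projection from a single tree-decomposition. Even numerically, splitting bags of size $t^{3/2}\sqrt{n}$ into groups of size $\sqrt{(t-3)n}$ needs $\lceil t\sqrt{t/(t-3)}\rceil > t$ groups per bag (for instance $8$ when $t=4$), already overshooting the required $t$.

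What the paper actually does for \cref{KtMinorFreeSqrt} is a direct induction on $\abs{V(G)}$ (\cref{JJstSqrtRooted}), carrying along a rooted $K_r$-model $(U_1,\dotsc,U_r)$ with $\abs{U_i}\le m$. At each step one applies the Alon--Seymour--Thomas hitting dichotomy (\cref{FindTree,FindTrees}) to $G-U$: either there is a small tree (or $t$ disjoint small trees) hitting every $N(U_i)$, which yields a forbidden $K_{r+1}$-model or $\JJ_{s,t}$-model, or there is a small separator $Y$ with $\abs{Y}\le m$ such that no component of $G-U-Y$ meets every $N(U_i)$. One then recurses on the pieces, rooted at a new $K_r$-model obtained by contracting a connected set through $Y$, and assembles $H$ by clique-sums; the treewidth bound on $H$ comes for free from the clique-sum structure, never from a tree-decomposition of $G$. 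The parameter $t-3$ emerges from balancing the two failure terms $(\ell-1)x$ and $(k-1)n/x$ in \cref{FindTrees} at $x=\sqrt{n}$ (with $k=s=t-2$, $\ell=2$), not from an extremal counting of minor models as your sketch suggests. Your proposal instead follows the template of \cref{KtMinorFreetw}, which is the right story for the $\tw(G)$ blow-up but cannot be made to give $\OO(\sqrt{n})$.
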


\Cref{KtMinorFreeSqrt}(a) immediately implies \cref{AST}, since 
\begin{equation*}
    \tw(G) \leq \tw(H \boxtimes K_{\floor{m}}) \leq (\tw(H) + 1)m - 1 < t \sqrt{(t - 3)n}.
\end{equation*}
The dependence on $n$ in the blow-up factor $m$ is best possible since the $n^{1/2} \times n^{1/2}$ planar grid graph $G$ is $K_5$-minor-free and has treewidth $n^{1/2}$. If $G$ is isomorphic to a subgraph of $H \boxtimes K_m$ where $H$ has bounded treewidth, then $n^{1/2} \leq \tw(G) \leq (\tw(H) + 1)m - 1$ and so $m = \Omega(n^{1/2})$. 

While our proof of \cref{KtMinorFreeSqrt} uses some ideas from the proof of \cref{AST} (in particular, \cref{FindTree} below), it is in fact significantly simpler, avoiding the use of havens or any form of treewidth duality. Instead, the proof directly constructs an isomorphism from $G$ to $H \boxtimes K_{\floor{m}}$ where $H$ is a graph obtained by repeated clique-sums (which implies the desired treewidth bound).

We also prove the following analogous theorem for excluded complete bipartite minors. Let \defn{$K^{\ast}_{s,t}$} be the graph whose vertex-set can be partitioned $A \cup B$, where $\abs{A} = s$, $\abs{B} = t$,  $A$ is a clique, and every vertex in $A$ is adjacent to every vertex in $B$, that is, $K^{\ast}_{s, t}$ is obtained from $K_{s, t}$ by adding all the edges inside the part of size $s$.

\begin{thm}
\label{KstMinorFreeSqrt}
For all integers $s, t\geq 2$, every $n$-vertex $K^{\ast}_{s, t}$-minor-free graph $G$ is isomorphic to a subgraph of $H \boxtimes K_{\floor{m}}$, where $\tw(H) \leq s$ and $m \coloneqq 2\sqrt{(s - 1)(t - 1)n}$.
\end{thm}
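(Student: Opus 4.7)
The plan is to imitate the proof of \cref{KtMinorFreeSqrt}, inducting on $n$ and directly building $H$ via repeated clique-sums over cliques of order at most $s$. The base case is when $n \le (s+1)\floor{m}$: take $H = K_{s+1}$, which has $\tw(H) = s$, and embed $V(G)$ arbitrarily into $H \boxtimes K_{\floor{m}}$. For the inductive step I would peel off a small separating bag and recurse on each remaining piece.

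The main tool would be a BFS layering of $G$ from an arbitrary root $v_0$, with layers $L_0, L_1, \ldots$ and BFS tree $R$. The key consequence of $K^{\ast}_{s,t}$-minor-freeness is this: if $P_1, \ldots, P_{s-1}$ are vertex-disjoint root-to-layer paths in $R$ whose top endpoints lie in a common layer $L_i$ and are pairwise adjacent in $G$, then at most $t-1$ components of $G - (P_1 \cup \cdots \cup P_{s-1})$ that meet $L_i$ can each be adjacent to every $P_j$. Indeed, contracting each $P_j$ produces a branch set for the $A$-side of a would-be $K^{\ast}_{s,t}$-minor, while such components would play the role of the $B$-side. By an averaging argument across the BFS layers, analogous to the one used for \cref{KtMinorFreeSqrt}, I would use this to locate a set $C \subseteq V(G)$ with $\abs{C} \le (s+1)\floor{m}$, spanning at most $s+1$ vertices of $H$ (each blown up by $K_{\floor{m}}$), such that every component of $G - C$ has strictly fewer than $n$ vertices and attaches to $C$ via a clique $C'$ with $\abs{C'} \le s$.

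Applying the induction hypothesis to each such component (together with its attaching clique $C'$) produces an embedding into $H'' \boxtimes K_{\floor{m}}$ with $\tw(H'') \le s$, and I can arrange that $C'$ lands inside a single bag of $H''$. Gluing all the $H''$s to the new bag along $C'$ by clique-sums of order at most $s$ yields a host graph $H$ with $\tw(H) \le s$ into whose product with $K_{\floor{m}}$ the original $G$ embeds.

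The main obstacle is the separator construction: the set $C$ must be simultaneously small (so that $\abs{C} \le (s+1)\floor{m}$ and it fits in a blow-up of $\le s+1$ vertices of $H$) and strongly separating (so that each recursive piece uses fewer than $n$ vertices and therefore still admits a blow-up by the fixed clique $K_{\floor{m}}$). The value $m = 2\sqrt{(s-1)(t-1)n}$ is precisely the calibration that makes these two demands compatible: the factor $s-1$ counts the vertical paths needed to assemble the $A$-side clique minor, the factor $t-1$ bounds the number of components that can attach via the $B$-side, and the factor $2$ supplies the slack that lets the induction telescope cleanly through the recursion.
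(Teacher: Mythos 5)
Your proposal takes a genuinely different route from the paper, and unfortunately the route has gaps that I do not see how to close.

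The paper does not use a BFS layering at all in the $\OO(\sqrt{n})$ results. Its engine is the Alon--Seymour--Thomas hitting lemma (\cref{FindTree}) and its iterated version (\cref{FindTrees}): given a graph and non-empty sets $A_1,\dotsc,A_k$, either there is a small tree meeting all of them, or a small vertex set after whose deletion no component meets all of them. The induction (\cref{JJstSqrtRooted}) maintains a rooted $K_r$-model $(U_1,\dotsc,U_r)$ with $r\leq s$ and each $\abs{U_i}\leq m$, sets $A_i \coloneqq N_G(U_i)\setminus U$, and applies the hitting lemma to $G-U$ with these $A_i$. When $r<s$ it uses \cref{FindTree} with $x=\sqrt{(s-2)n}$ (a small tree would extend the model, contradicting maximality); when $r=s$ it uses \cref{FindTrees} with $\ell=t$, $k=s$, $x=\sqrt{\tfrac{s-1}{t-1}n}$ (finding $t$ disjoint trees would, after gluing in the connected complement, yield a member of $\JJ_{s,t}$ as a minor). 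The value $m=2\sqrt{(s-1)(t-1)n}$ is exactly the balance point $(t-1)x + (s-1)n/x$, i.e.\ it comes from an AM--GM optimisation in the hitting lemma, not from slack in a telescoping recursion. The resulting set $Y$ is made a new part of the partition, forming a clique with $U_1,\dotsc,U_r$ in $H$, and $H$ is assembled by clique-sums over the pieces.

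Concretely, here are the gaps in your version. (i) ``Vertex-disjoint root-to-layer paths in $R$'' cannot exist: all root-to-layer paths in a BFS tree pass through the root $v_0$. If you allow them to share the root, the contracted branch sets overlap and do not form a minor model; if you instead start the paths at $L_1$, it is no longer clear that they can be found pairwise adjacent and chosen to interact usefully with a common layer. (ii) With $P_1,\dotsc,P_{s-1}$ you only have $s-1$ candidate branch sets for the $A$-side of $K^{\ast}_{s,t}$, but the $A$-side needs $s$. The missing set is never identified. (iii) The ``averaging argument across the BFS layers'' is the entire difficulty and is left unstated. For planar graphs the Lipton--Tarjan averaging works because two BFS layers together bound an annular region whose interior can be triangulated and controlled; no analogous structural fact is available for general $K^{\ast}_{s,t}$-minor-free graphs, and indeed the whole point of the Alon--Seymour--Thomas lemma is to replace that topology with a counting argument about connected subgraphs and hitting sets. (iv) The base case ``$n\leq (s+1)\floor{m}$'' is self-referential (since $m$ depends on $n$), which is fixable, but it signals that the bookkeeping of the induction has not been worked out: in the paper the induction is on $\abs{V(G)}$ with the bound $m(\abs{V(G)})$ varying, and one needs to check that a partition of width $m(n')$ for $n'<n$ is still a partition of width $m(n)$, which is true but must be said.

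In short, your proposal is a Lipton--Tarjan--style BFS separator argument transplanted into the $K^{\ast}_{s,t}$-minor-free world; the paper instead uses a rooted-model induction powered by the ASt hitting lemma, which sidesteps all the topological and averaging difficulties your approach would have to overcome. The two would only coincide, roughly, in the planar case.
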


Again the $n^{1/2} \times n^{1/2}$ planar grid (which is $K_{3, 3}$-minor-free) shows the dependence on $n$ in the blow-up factor is best possible---we must have $m = \Omega(n^{1/2})$.

In light of \cref{AST}, it is natural to try to qualitatively strengthen \cref{KtMinorFreeSqrt,KstMinorFreeSqrt} by bounding the blow-up factor by a function of the treewidth of $G$, and ideally by a linear function of $\tw(G)$ since if $G \subseteq H \boxtimes K_m$ and $\tw(H) = \OO(1)$, then $m = \Omega(\tw(G))$. In this direction, \citet[Thm.~18]{UTW} proved that every $K_t$-minor-free graph $G$ is isomorphic to a subgraph of $H \boxtimes K_m$ where $\tw(H) \leq t - 2$ and $m  = \OO_t(\tw(G)^2)$. Similarly, they proved \citep[Thm.~19]{UTW} that every $K_{s,t}$-minor-free graph $G$ is isomorphic to a subgraph of $H \boxtimes K_m$  where $\tw(H) \leq s$ and $m = \OO_{s, t}(\tw(G)^2)$. Here $\OO_{s,t}(\cdot)$ and $\Omega_{s,t}(\cdot)$ hide dependence on $s$ and $t$. 

We achieve a blow-up factor that is linear in $\tw(G)$, and is independent of $t$ for $K_t$-minor-free graphs.

\begin{thm}
\label{KtMinorFreetw}
For any integer $t \geq 2$, every $K_t$-minor-free graph $G$ is isomorphic to a subgraph of $H \boxtimes K_m$, where $\tw(H) \leq t - 2$ and $m \coloneqq \tw(G) + 1$.
\end{thm}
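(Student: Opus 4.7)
The plan is as follows. Fix a tree decomposition $(T, (B_x)_{x \in V(T)})$ of $G$ with $|B_x| \leq m := \tw(G) + 1$ for every $x$. Root $T$ at an arbitrary vertex, and for each $v \in V(G)$ let $\psi(v) \in V(T)$ be the root of the subtree $T_v := \{y \in V(T) : v \in B_y\}$; set $V_x := \psi^{-1}(x) \subseteq B_x$. Since the chordal completion of $(T, (B_x))$ has clique number at most $m$, we may choose a proper $m$-colouring $\phi : V(G) \to [m]$ for which the colours in each bag are all distinct.

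Let $H$ be the graph with vertex set $V(T)$ and edge set $E(H) := \{\psi(u)\psi(v) : uv \in E(G),\ \psi(u) \neq \psi(v)\}$. For any $uv \in E(G)$ with $u, v \in B_z$, the nodes $\psi(u), \psi(v)$ both lie on the root-to-$z$ path of $T$, so they are comparable in the tree order. Thus every edge of $H$ runs between an ancestor–descendant pair, and the map $v \mapsto (\psi(v), \phi(v))$ is an embedding of $G$ into $H \boxtimes K_m$: when $\psi(u) = \psi(v)$ the edge is supplied by the $K_m$ factor (since $\phi(u) \neq \phi(v)$), and otherwise $\psi(u)\psi(v) \in E(H)$ by construction.

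The core of the proof is the bound $\tw(H) \leq t - 2$. The natural candidate tree decomposition of $H$ along $T$ has bags $\tilde{B}_y := \{y\} \cup \{x \in V(T) : x \text{ is a proper ancestor of } y \text{ and } y \in T_u \text{ for some } u \in V_x\}$. Each $\tilde{B}_y$ is a chain in $T$, these bags cover every edge of $H$, and the set $\{y : z \in \tilde{B}_y\} = \bigcup_{u \in V_z} T_u$ is a connected subtree of $T$ for every $z$, so this decomposition is valid. It remains to show $|\tilde{B}_y| \leq t - 1$ for every $y$. Suppose otherwise: enumerate $\tilde{B}_y$ as $y < y_1 < y_2 < \ldots < y_s$ with $s \geq t - 1$, and for each $i$ pick a witness $u_i \in V_{y_i}$ with $y \in T_{u_i}$. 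Each $T_{u_i}$ then contains the downward chain $y_i, y_{i-1}, \ldots, y_1, y$, so $u_1, \ldots, u_s$ all sit in $B_y$ and more generally $u_i, u_{i+1}, \ldots, u_s \in B_{y_i}$. The goal is to use this nested configuration, together with a vertex of $V_y$, to exhibit a $K_t$-minor of $G$, contradicting the hypothesis.

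The main obstacle is delivering that $K_t$-minor. The $u_i$ lie together in many bags but need not be pairwise adjacent in $G$, so the branch sets have to be enlarged by paths threaded through the nested bags $B_{y_1} \supseteq B_{y_2} \supseteq \ldots$, exploiting that each $u_i$ is introduced at $y_i$ and so all its edges in $G$ land inside $\bigcup_{z \in T_{u_i}} B_z$. Carrying this out cleanly is likely to require either a preliminary refinement of the tree decomposition (for instance, so that each $V_x$ is connected in $G$, which makes $H$ itself a minor of $G$), an appeal to the Alon--Seymour--Thomas-style \cref{FindTree}, or a direct application of the more general \cref{JJstMain}, which presumably abstracts exactly this step. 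Once the $K_t$-minor is in hand, $s \leq t - 2$ forces $|\tilde{B}_y| \leq t - 1$, and therefore $\tw(H) \leq t - 2$ as required.
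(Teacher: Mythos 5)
Your construction of $H$ is a valid embedding of $G$ into $H\boxtimes K_m$, and the tree decomposition $(\tilde B_y)_{y\in V(T)}$ of $H$ along $T$ is correct as far as it goes. However, the crucial claim $|\tilde B_y|\le t-1$ (equivalently $\tw(H)\le t-2$) is \emph{false} for the $H$ you build, so the gap you flag at the end is not one that can be filled: the approach, not just the write-up, is broken.

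Here is a concrete counterexample. Let $G$ be the $k\times k$ planar grid with $k\ge 5$, which is $K_5$-minor-free, so $t=5$ and we want $\tw(H)\le 3$. Take the standard path-shaped tree decomposition in which the bags are the $k(k-1)$ sets $B_{(i,j)}=\{(i-1,j),\dots,(i-1,k),(i,1),\dots,(i,j)\}$ for $2\le i\le k$ and $1\le j\le k$, ordered as a path and rooted at $B_{(2,1)}$. Then $\psi(1,j)=\psi(2,1)=B_{(2,1)}$ and $\psi(i,j)=B_{(i,j)}$ for the remaining $(i,j)$; in particular each $V_x$ is connected in $G$ (so even your suggested refinement is already in place and does not help). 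The bag $B_{(3,1)}=\{(2,1),\dots,(2,k),(3,1)\}$ has $k+1$ vertices with $k+1$ \emph{distinct} introduction nodes $B_{(2,1)},B_{(2,2)},\dots,B_{(2,k)},B_{(3,1)}$, so $|\tilde B_{B_{(3,1)}}|=k+1$, far exceeding $t-1=4$. Worse, the problem is not confined to your candidate decomposition of $H$: the vertical grid edges $(i,j)$--$(i+1,j)$ with $i\ge 2$ give $H$ (restricted to $\{B_{(i,j)}:i\ge 2,(i,j)\ne(2,1)\}$) the structure of a $(k-1)\times k$ grid minus one corner, which contains a $(k-1)\times(k-1)$ grid as a subgraph. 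Hence $\tw(H)\ge k-1$, which is unbounded, not $\le 3$. The trouble is that $\psi$ is far too fine: essentially every vertex of $G$ becomes its own node of $H$, so $H$ inherits almost all of the structure of $G$ instead of compressing it. No clever $K_t$-minor argument can rescue a false bound.

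The paper's proof (\cref{JJstMain}/\cref{JJstMainRooted}) avoids exactly this trap. It does not take $V(H)$ to be the node set of a tree decomposition. Instead it inductively builds $H$ as a \emph{partition} of $V(G)$: at each step it finds a hitting set $Y$ of small $\WW$-width (via Helly / \cref{TreeHit}) that separates $G-U$, makes $Y$ a single part of $H$, recurses on the pieces, and glues the recursive partitions back together by clique-sums. The clique-sum structure is what delivers $\tw(H)\le t-2$; the tree decomposition $(T,\WW)$ is used only to control the $\WW$-width of the parts, which bounds $m$. So the two objects you conflate -- the tree-decomposition tree of $G$, and the target graph $H$ -- are genuinely different in the paper, and a correct proof along your lines would have to coarsen $\psi$ drastically, which in effect reconstructs the paper's partition argument.
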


The value of $m$ in \cref{KtMinorFreetw} is within a factor $t - 1$ of best possible, since
\begin{equation*}
    \tw(G) \leq \tw(H \boxtimes K_m) \leq (\tw(H) + 1)m - 1 < (t - 1)m.
\end{equation*}
Furthermore, the $t - 2$ bound on the treewidth of $H$ is best possible, since \citet[Thm.~18]{UTW} proved  that, for any function $f$ and for all $t$, there is a $K_t$-minor-free graph $G$ that is not a subgraph of $H \boxtimes K_{f(\tw(G))}$ for any graph $H$ with treewidth at most $t - 3$. 

For $K^{\ast}_{s, t}$-minor-free graphs we also obtain a blow-up factor that is linear in $\tw(G)$.

\begin{thm}
\label{KstMinorFreetw}
For all integers $s, t\geq 2$, every $K^{\ast}_{s, t}$-minor-free graph $G$ is isomorphic to a subgraph of $H \boxtimes K_m$, where $\tw(H) \leq s$ and $m \coloneqq (t - 1)(\tw(G) + 1)$.
\end{thm}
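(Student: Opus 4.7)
The plan is to derive \cref{KstMinorFreetw} as a direct consequence of the main structural theorem \cref{JJstMain} of the paper, applied to a tree-decomposition of $G$ of minimum width. Fix such a tree-decomposition $(B_x : x \in V(T))$ of $G$, so $\abs{B_x} \le \tw(G) + 1$ for every $x \in V(T)$. The goal is to construct a partition of $V(G)$ into columns of size at most $(t-1)(\tw(G)+1)$, together with a quotient graph $H$ of treewidth at most $s$, realising the embedding $G \subseteq H \boxtimes K_{m}$ with $m \coloneqq (t-1)(\tw(G)+1)$.

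Following the structure suggested by \cref{JJstMain}, the columns are formed by grouping vertices associated with runs of at most $(t-1)$ consecutive nodes along root-paths in $T$ (after rooting $T$ arbitrarily). Since $\abs{B_x} \le \tw(G)+1$, any such group contributes at most $(t-1)(\tw(G)+1)$ vertices, matching $m$. The quotient graph $H$ inherits a tree-like backbone from $T$, with additional edges wherever two columns contain vertices adjacent in $G$ across the grouping boundaries.

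The key step, and the main obstacle, is showing $\tw(H) \le s$ from the $K^{\ast}_{s, t}$-minor-free hypothesis. The strategy is to prove that any treewidth obstruction in $H$ of order exceeding $s+1$ would lift back through the column partition to a $K^{\ast}_{s, t}$-minor in $G$: namely, $s$ pairwise-touching branch sets forming the clique part $A$, together with $t$ further branch sets each adjacent to all of $A$ forming the part $B$. The factor $(t-1)$ in the column size is exactly the slack that permits up to $t-1$ such ``pendant'' branch sets to fit inside a single column, but forces a $t$-th pendant to come from a separate column, thereby producing the forbidden minor whenever $\tw(H) > s$. With this implemented inside \cref{JJstMain}, the required product structure and its parameters follow immediately.
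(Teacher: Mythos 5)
Your opening sentence is the right proof, and it is exactly the paper's proof: apply \cref{JJstMain} to a minimum-width tree-decomposition $(T,\WW)$ of $G$, note that a $K^{\ast}_{s,t}$-minor-free graph is $\JJ_{s,t}$-minor-free (since $K^{\ast}_{s,t}$ is a subgraph of every graph in $\JJ_{s,t}$), obtain a partition $H$ with $\WW$-width at most $t-1$ and $\tw(H)\le s$, observe that each part therefore lies in the union of at most $t-1$ bags of size at most $\tw(G)+1$ and so has size at most $(t-1)(\tw(G)+1)$, and conclude via \cref{ProductPartition}. That is the whole argument; \cref{JJstTw} in the paper is precisely this.

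However, everything after your first sentence misdescribes what \cref{JJstMain} provides and then tries to re-prove part of it. Two concrete problems. First, the partition produced by \cref{JJstMain} is not ``groups of $(t-1)$ consecutive nodes along root-paths in $T$'' and the quotient $H$ does not ``inherit a tree-like backbone from $T$''; the proof of \cref{JJstMainRooted} builds the parts inductively from carefully chosen separators and hitting sets, and $H$ is assembled by clique-sums. The naive bag-grouping you describe would not yield bounded treewidth in general (a long induced path in $T$ with heavily overlapping bags can make such a quotient dense), so if you were actually proposing it as an alternative construction it would fail. Second, you identify ``the key step, and the main obstacle'' as proving $\tw(H)\le s$ via a minor-lifting argument from $H$ back to $G$. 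That bound is already a conclusion of \cref{JJstMain}; nothing remains to prove once you invoke it. Your sketch of the lifting argument (extracting $s$ mutually adjacent branch sets and $t$ pendants from a treewidth obstruction in $H$) is also not justified and is not how the paper obtains the treewidth bound. If your intent was merely to use \cref{JJstMain} as a black box, you should say so and stop; if your intent was to reconstruct its proof, the construction and the treewidth argument you give are both wrong and the actual proof (\cref{JJstMainRooted}) is substantially different. You also never note the reduction from $K^{\ast}_{s,t}$-minor-free to $\JJ_{s,t}$-minor-free, which is needed to apply \cref{JJstMain} at all.
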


Here the value of $m$ is within a factor $(s + 1)(t - 1)$ of best possible and the $\tw(H) \leq s$ bound is best possible \citep[Thm.~19]{UTW}.

An attraction of \cref{KstMinorFreeSqrt,KstMinorFreetw} is that $\tw(H)$ depends on $s$ and not on the size of the excluded minor. This is particularly relevant for graphs of Euler genus\footnote{The \defn{Euler genus} of a surface with $h$ handles and $c$ cross-caps is $2h + c$. The \defn{Euler genus} of a graph $G$ is the minimum integer $g\geq 0$ such that $G$ embeds in a surface of Euler genus $g$; see \cite{MoharThom} for more about graph embeddings in surfaces.} $g$, since these contain no $K_{3, 2g + 3}$-minor. Thus the next result follow from \cref{KstMinorFreeSqrt,KstMinorFreetw}.

\begin{cor}
\label{Surface}
For any integer $g \geq 0$, every $n$-vertex graph $G$ of Euler genus $g$ is isomorphic to a subgraph of $H \boxtimes K_{\floor{m}}$, where $\tw(H) \leq 3$ and
\begin{equation*}
    m \coloneqq \min\set{4\sqrt{(g + 1)n},\, 2 (g + 1)(\tw(G) + 1)}.
\end{equation*}
\end{cor}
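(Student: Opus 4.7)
The plan is to derive \cref{Surface} as a direct application of \cref{KstMinorFreeSqrt,KstMinorFreetw} to the excluded minor $K^{\ast}_{3, 2g + 3}$. First I would invoke the footnoted fact that every graph of Euler genus $g$ is $K_{3, 2g+3}$-minor-free; this is a classical consequence of Euler's formula applied to the bipartite graph $K_{3, 2g+3}$ (whose faces in any embedding have length at least $4$), combined with the observation that taking a minor preserves embeddability in a fixed surface.

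Next I would make the routine but essential observation that $K_{3, 2g + 3}$ is a subgraph of $K^{\ast}_{3, 2g + 3}$, so every $K_{3, 2g+3}$-minor-free graph is also $K^{\ast}_{3, 2g + 3}$-minor-free. Consequently, $G$ satisfies the hypotheses of both \cref{KstMinorFreeSqrt} and \cref{KstMinorFreetw} with the parameter choice $s = 3$ and $t = 2g + 3$.

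Finally I would apply each theorem in turn and simplify the constants. \Cref{KstMinorFreeSqrt} yields a graph $H_1$ with $\tw(H_1) \le 3$ and a blow-up factor $m_1 = 2\sqrt{(s-1)(t-1)\,n} = 2\sqrt{2 \cdot (2g+2) \cdot n} = 4\sqrt{(g+1)n}$ such that $G \subseteq H_1 \boxtimes K_{\floor{m_1}}$. Similarly, \cref{KstMinorFreetw} yields a graph $H_2$ with $\tw(H_2) \le 3$ and $m_2 = (t - 1)(\tw(G) + 1) = 2(g+1)(\tw(G)+1)$ such that $G \subseteq H_2 \boxtimes K_{m_2}$. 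Picking whichever decomposition has the smaller blow-up factor gives the stated bound $m = \min\set{m_1, m_2}$.

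There is no real obstacle here: the only conceptual step is the subgraph containment $K_{s, t} \subseteq K^{\ast}_{s, t}$, which transfers excluded-minor hypotheses in the direction needed to invoke the $K^{\ast}_{s,t}$-minor-free theorems from a classical Euler-genus bound; everything else is arithmetic substitution.
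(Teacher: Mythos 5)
Your proposal is correct and matches the paper's own derivation: the paper likewise deduces \cref{Surface} from \cref{KstMinorFreeSqrt,KstMinorFreetw} via the fact that Euler-genus-$g$ graphs are $K_{3,2g+3}$-minor-free (hence $K^{\ast}_{3,2g+3}$-minor-free), with the same substitution $s=3$, $t=2g+3$. (Minor note: the $K_{3,2g+3}$-minor-freeness claim appears in the paper's main text, not in the footnote, which only defines Euler genus.)
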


\Cref{Surface} is a product strengthening of results about balanced separators (equivalently, about treewidth) in graphs embeddable on surfaces of genus $g$, independently due to \citet{Djidjev81} and  \citet{GHT-JAlg84}. 
In particular, \cref{Surface} implies that $\tw(G) \leq (\tw(H)+1)m-1 = 4m -1 < 16\sqrt{(g + 1)n}$ and that $G$ has a balanced separator of size at most $4m\leq 16\sqrt{(g + 1)n}$. Both these bounds are tight up to the multiplicative constant. 

\Cref{KtMinorFreetw,KstMinorFreetw} are in fact special cases of a more general result, \cref{JJstMain}, that essentially converts any tree-decomposition of a graph excluding a particular minor into a strong product.  The starting tree-decomposition may be chosen to suit one's needs.  Making use of this flexibility, we deduce the Planar Graph Product Structure Theorem, \cref{PGPST}(b).

\begin{thm}[\cite{DJMMUW20}]\label{PGPST}
Every planar graph is isomorphic to a subgraph of\textnormal{:}
\begin{enumerate}[\textnormal{(\alph{*})}]
    \item $H \boxtimes P$ for some graph $H$ of treewidth  $8$ and for some path $P$.
    \item $H \boxtimes P \boxtimes K_3$ for some graph $H$ of treewidth $3$ and for some path $P$.
\end{enumerate}
\end{thm}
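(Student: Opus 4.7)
The plan is to deduce \cref{PGPST}(b) from the main theorem \cref{JJstMain} by supplying it with a tree-decomposition of the planar graph $G$ tailored to a BFS layering; the paper explicitly highlights this flexibility as the route from the general framework to PGPST.

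First I would fix a root $r$ in $G$, let $L_i$ be the set of vertices at distance $i$ from $r$, and let $P$ be a path whose vertices index the nonempty layers. Then I would build a tree-decomposition $(B_x : x \in V(T))$ of $G$ whose intersections $B_x \cap L_j$ each have at most three vertices, and whose underlying tree structure has small width. This is essentially the known layered-treewidth-at-most-$3$ property of planar graphs: triangulate $G$ without changing planarity, take the BFS tree $T_r$ from $r$, and build bags from the face/fundamental-cycle structure of the triangulation relative to $T_r$. Planarity, combined with the BFS property that every edge of $G$ lies within a single layer or between consecutive layers, forces each bag to meet each layer in at most three vertices and to span at most three consecutive layers.

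Next, I would feed this layered tree-decomposition into \cref{JJstMain}. The general theorem is advertised as converting such a decomposition into a product whose factors mirror the decomposition's structure. The BFS layering pulls out the path factor $P$, the three-vertices-per-layer bound pulls out the factor $K_3$, and the residual tree structure yields the graph $H$. Since planar graphs are $K_5$-minor-free, the treewidth bound $\tw(H) \leq 3$ on the resulting $H$ aligns with the $t - 2 = 3$ bound that \cref{KtMinorFreetw} produces for $t = 5$; this consistency is the main qualitative sanity check.

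The main obstacle is the interface with \cref{JJstMain}: one must verify that, when the theorem is fed a decomposition equipped with a BFS layering of bounded intersection, it outputs a separate path factor $P$ rather than absorbing the layering into a larger $K_m$ factor. Given that interface, the planarity-specific content reduces to the standard construction of a layered tree-decomposition of width $3$ for a planar triangulation, which is classical and does not use anything beyond Euler's formula and BFS.
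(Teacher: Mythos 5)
Your broad strategy is the right one (planar graphs have layered treewidth at most $3$ and are $K_5$-minor-free, then invoke the general machinery), but the step you yourself flag as ``the main obstacle'' is a genuine gap, and it is exactly where the proof does its work. \cref{JJstMain} never sees the layering: it takes only a tree-decomposition $(T,\WW)$ and outputs a partition $H$ of bounded treewidth and bounded $\WW$-width. The layering is not ``fed into'' the theorem, and nothing in \cref{JJstMain} ``pulls out a path factor.'' What the paper actually does is the following. First, write $K_5 = \JJ_{3,2}$, so applying \cref{JJstMain} with $s=3$, $t=2$ to the layered tree-decomposition of $G$ gives a partition $H$ with $\tw(H)\le 3$ and, crucially, $\WW$-width at most $t-1=1$: \emph{each part of $H$ is contained in a single bag}. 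Second, run the BFS layering $\LL$ alongside: since layered treewidth $\le 3$ means each bag meets each layer in at most $3$ vertices, each part of $H$ meets each layer in at most $3$ vertices. Third, \cref{ProductLayering} converts this pair (partition $H$, layering $\LL$, intersections of size $\le 3$) into an embedding $G \hookrightarrow H\boxtimes P\boxtimes K_3$.

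The ingredient you are missing is the $\JJ_{3,2}$ reformulation, and it is not cosmetic. If you instead reason directly from ``$K_5$-minor-free'' using \cref{KtMinorFreetw} (as your sanity check suggests) or feed in $\JJ_{4,1}$, you do not get $\WW$-width $1$, and the resulting bound on the intersection of a part with a layer is $(\text{$\WW$-width})\cdot\ltw(G)$, which overshoots $3$. The point of parameterising by $\JJ_{s,t}$ rather than by $K_t$ alone is precisely that for $K_5$ one may choose $(s,t)=(3,2)$ to obtain both $\tw(H)\le 3$ and a part-per-bag partition simultaneously; this is what gives $K_3$ rather than a larger clique factor. Without \cref{ProductLayering} and this choice, your outline does not yet compile into a proof.
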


\Cref{PGPST} has been the key tool to resolve several open problems regarding queue layouts~\citep{DJMMUW20}, nonrepetitive colouring~\citep{DEJWW20}, $p$-centered colouring~\citep{DFMS21}, adjacency labelling~\citep{BGP20,EJM,DEJGMM21}, infinite graphs~\citep{HMSTW},  twin-width~\citep{BKW,BDHK}, and comparable box dimension~\citep{DGLTU22}. 

The bound of 3 on the treewidth of $H$ in (b) is tight \citep{DJMMUW20} even if $K_3$ is replaced by any constant-sized complete graph. Note that $\tw(H \boxtimes K_3) \leq 3 \tw(H) + 2$ for any graph $H$, so (b) implies (a) but with 8 replaced by 11. Our proof of \cref{PGPST}(b) removes much of the topology from the original proof, avoiding the use of Sperner's planar triangulation lemma. This allows us to prove a more general $H \boxtimes P \boxtimes K_m$ structure theorem, \cref{JJstLtw}, which we apply in the more general setting of apex-minor-free graphs, \cref{Apex}. This in turn has applications for $p$-centred colourings. 

\section{\large Preliminaries}\label{sec:prelim}

We consider simple finite undirected graphs $G$ with vertex-set $V(G)$ and edge-set $E(G)$. For each vertex $v\in V(G)$, let $N_G(v) = \set{w \in V(G) \colon vw\in E(G)}$. For $S \subseteq V(G)$, let $N_G(S) = \bigcup \set{N_G(v) \colon v \in S} \setminus S$.

A graph $H$ is a \defn{minor} of a graph $G$ if a graph isomorphic to $H$ can be obtained from a subgraph of $G$ by contracting edges. Say $G$ is \defn{$H$-minor-free} if $H$ is not a minor of $G$. A \defn{$K_r$-model} in a graph $G$ consists of pairwise-disjoint vertex-sets $(U_1, \dotsc, U_r)$ such that, for each $i$, the induced subgraph $G[U_i]$ is connected and, for all distinct $i, j$, there is an edge between $U_i$ and $U_j$. Clearly $K_r$ is a minor of a graph $G$ if and only if $G$ contains a $K_r$-model.

\subsection{Tree-decompositions and treewidth}\label{subsec:treedecom}

A \defn{tree-decomposition} $(T, \WW)$ of a graph $G$ consists of a collection $\WW=(W_x \colon x \in V(T))$ of subsets of $V(G)$, called \defn{bags}, indexed by the nodes of a tree $T$, such that:
\begin{itemize}
\item for each vertex $v \in V(G)$, the set $\set{x\in V(T) \colon v \in W_x}$ induces a non-empty (connected) subtree of $T$; and
\item for each edge $vw \in E(G)$, there is a node $x \in V(T)$ for which $v, w \in W_x$. 
\end{itemize}
The \defn{width} of such a tree-decomposition is $\max\set{\abs{W_x} \colon x \in V(T)} - 1$. The \defn{treewidth}, $\tw(G)$, of a graph $G$ is the minimum width of a tree-decomposition of $G$. Treewidth is the standard measure of how similar a graph is to a tree. Indeed, a connected graph has treewidth 1 if and only if it is a tree. Treewidth is of fundamental importance in structural and algorithmic graph theory; see \citep{Reed03,HW17,Bodlaender-TCS98} for surveys. 

We use the following property to prove treewidth upper bounds. A graph $G$ is a \defn{clique-sum} of graphs $G_1$ and $G_2$, if for some clique $\set{v_1, \dotsc, v_k}$ in $G_1$ and for some clique $\set{w_1, \dotsc, w_k}$ in $G_2$, $G$ is obtained from the disjoint union of $G_1$ and $G_2$ by identifying $v_i$ and $w_i$ for each $i$. In this case, it is well known and easily seen that $\tw(G) = \max\set{\tw(G_1), \tw(G_2)}$.

\subsection{Partitions}\label{subsec:part}

Instead of working with products, it is convenient to present our proofs using the following definition. A \defn{partition} of a graph $G$ is a graph $H$ such that:
\begin{itemize}
	\item each vertex of $H$ is a set of vertices of $G$, 
	\item each vertex of $G$ is in exactly one vertex of $H$, and
	\item for each edge $vw$ of $G$, if $v\in X\in V(H)$ and $w\in Y\in V(H)$ then $XY\in E(H)$ or $X=Y$. 
\end{itemize}
We call the vertices of $H$ the \defn{parts} of the partition. The \defn{width} of a partition is the size of its largest part. The \defn{treewidth} of a partition $H$ is $\tw(H)$. The next observation follows from the definitions and gives a useful characterisation of when a graph is isomorphic to a subgraph of a product of the form $H \boxtimes K_m$.

\begin{obs}
\label{ProductPartition}
A graph $G$ has a partition $H$ of width at most $m$ if and only if $G$ is isomorphic to a subgraph of $H \boxtimes K_{\floor{m}}$.
\end{obs}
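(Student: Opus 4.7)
The plan is to verify both directions of the equivalence by making explicit the correspondence between ``a part of $H$ together with an index in $\{1,\dotsc,\floor{m}\}$'' and ``a vertex of $H\boxtimes K_{\floor{m}}$''.

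For the forward direction, suppose $G$ has a partition $H$ of width at most $m$. For each part $X \in V(H)$, choose an injective labelling $\lambda_X \colon X \to \{1, \dotsc, \floor{m}\}$, which exists since $\abs{X}\le m$ and hence $\abs{X}\le\floor{m}$. Define $\phi \colon V(G)\to V(H)\times\{1,\dotsc,\floor{m}\}$ by $\phi(v) = (X,\lambda_X(v))$, where $X$ is the unique part containing $v$. Since each $\lambda_X$ is injective and the parts are disjoint, $\phi$ is injective. For an edge $vw\in E(G)$ with $v\in X$ and $w\in Y$, the definition of a partition gives either $X=Y$ (in which case $\lambda_X(v)\neq \lambda_X(w)$, so $\phi(v)$ and $\phi(w)$ are adjacent in $H\boxtimes K_{\floor{m}}$ via the first coordinate agreeing) or $XY\in E(H)$ (in which case $\phi(v)$ and $\phi(w)$ are adjacent in $H\boxtimes K_{\floor{m}}$ via the $H$-edge, regardless of whether the second coordinates coincide). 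Thus $\phi$ realises $G$ as a subgraph of $H\boxtimes K_{\floor{m}}$.

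For the backward direction, suppose $\phi \colon V(G) \hookrightarrow V(H)\times\{1,\dotsc,\floor{m}\}$ is an isomorphism onto a subgraph of $H\boxtimes K_{\floor{m}}$. For each $X\in V(H)$, let $X' \coloneqq \{v\in V(G) : \phi(v)\in\{X\}\times\{1,\dotsc,\floor{m}\}\}$. Let $H'$ be the graph on vertex-set $\{X' : X \in V(H)\}$ (discarding empty $X'$, or equivalently viewing $H'$ as a subgraph of $H$ with vertices relabelled by the sets $X'$) with $X'Y'\in E(H')$ whenever $XY\in E(H)$. The sets $X'$ partition $V(G)$ because $\phi$ is injective and every vertex lies in some fibre, and $\abs{X'}\le\floor{m}\le m$. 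For $vw\in E(G)$ with $v\in X'$ and $w\in Y'$, the fact that $\phi(v)\phi(w)$ is an edge of $H\boxtimes K_{\floor{m}}$ forces either $X=Y$ (so $v,w$ lie in the same part $X'$) or $XY\in E(H)$ (so $X'Y'\in E(H')$). Hence $H'$ is a partition of $G$ of width at most $m$.

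The bookkeeping above is entirely routine; the only care required is to note that the width bound $\abs{X}\le m$ translates to $\abs{X}\le\floor{m}$ (since $\abs{X}$ is an integer), which is exactly what matches the size of $K_{\floor{m}}$. No real obstacle arises --- the observation is essentially a reformulation of the definition of the strong product.
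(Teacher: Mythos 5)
Your proof is correct and is exactly the routine verification that the paper declines to spell out (the paper simply asserts the observation "follows from the definitions"). Both directions are handled cleanly, and you rightly note the small point that $\abs{X}\le m$ with $\abs{X}$ an integer gives $\abs{X}\le\floor{m}$.
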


In light of \cref{ProductPartition}, to prove our results it suffices to find a suitable partition. The following definition enables inductive proofs. A partition $H$ of a graph $G$ is \defn{rooted} at a $K_r$-model $(U_1, \dotsc, U_r)$ in $G$ if $U_1, \dotsc, U_r$ are vertices of $H$. Note that $U_1, \dotsc, U_r$ must be the vertices of an $r$-clique in $H$.

Finally, it will be useful to measure the `complexity' of a vertex-set with respect to a tree-decomposition $(T, \WW)$ of $G$. For a vertex-set $S \subseteq V(G)$, the \defn{$\WW$-width} of $S$ is the minimum number of bags of $\WW$ whose union contains $S$. The \defn{$\WW$-width} of a collection of vertex-sets is the maximum $\WW$-width of one of its sets. In a slight abuse of terminology, the \defn{$\WW$-width} of a partition $H$ of $G$ is the maximum $\WW$-width of one of the vertices of $H$.

\subsection{Hitting sets}\label{subsec:hitting}

Our proofs use results that say a collection of connected subgraphs of a graph (satisfying certain conditions) either has a small `hitting set' (a small set of vertices that meets every subgraph in the collection) or contains some suitable graphs. The following lemma is folklore (see \citep[(8.7)]{RS-V}). We include the proof for completeness. The  \defn{independence number $\alpha(G)$} of a graph $G$ is the size of a largest set $S\subseteq V(G)$ such that no edge of $G$ has both its end-vertices in $S$.

\begin{lem}
\label{TreeHit}
For any integer $\ell\geq 0$ and any collection $\FF$ of subtrees of a tree $T$, either\textnormal{:}
\begin{enumerate}[\textnormal{(}a\textnormal{)}]
    \item there are $\ell + 1$ vertex-disjoint trees in $\FF$, or
    \item there is set $S$ of at most $\ell$ vertices such that $S \cap V(T') \neq \emptyset$ for all $T' \in \FF$.
\end{enumerate}
\end{lem}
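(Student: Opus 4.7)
I would proceed by induction on $\ell$. The base case $\ell = 0$ is immediate: if $\FF$ is non-empty then any single tree in $\FF$ is one vertex-disjoint tree, giving (a); and if $\FF$ is empty then $S = \emptyset$ hits every member of $\FF$ vacuously, giving (b).

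For the inductive step, the plan is to isolate a single vertex $v \in V(T)$ that hits a large sub-family of $\FF$, apply induction to what remains, and combine. To find such a $v$, root $T$ at an arbitrary vertex. For each $T' \in \FF$, let $\rho(T')$ denote the vertex of $T'$ of smallest depth in the rooted tree $T$; since $T'$ is connected, $\rho(T')$ is an ancestor (in $T$) of every vertex of $T'$. Choose $T^{\ast} \in \FF$ so that the depth of $v \coloneqq \rho(T^{\ast})$ is as large as possible.

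The key observation, which I expect to be the main point to verify carefully, is that every $T' \in \FF$ which meets $T^{\ast}$ contains $v$. Indeed, if $u \in V(T^{\ast}) \cap V(T')$, then both $v$ and $\rho(T')$ are ancestors of $u$ in $T$, so they are comparable in the ancestor order. By the maximality of the depth of $v$, the depth of $\rho(T')$ is at most that of $v$, so $v$ lies on the (ancestor) path from $u$ to $\rho(T')$. This path is contained in $T'$ (as $T'$ is a subtree containing both endpoints), so $v \in V(T')$, as claimed.

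Now let $\FF' \coloneqq \set{T' \in \FF \colon v \notin V(T')}$. By the above, every tree in $\FF'$ is vertex-disjoint from $T^{\ast}$. Apply the induction hypothesis to $\FF'$ with parameter $\ell - 1$. In case (a) for $\FF'$ we obtain $\ell$ vertex-disjoint subtrees in $\FF'$, and appending $T^{\ast}$ gives $\ell + 1$ vertex-disjoint subtrees in $\FF$, establishing (a) for $\FF$. In case (b) for $\FF'$ we obtain a set $S'$ of at most $\ell - 1$ vertices meeting every tree in $\FF'$; then $S' \cup \set{v}$ has size at most $\ell$ and meets every tree in $\FF$, establishing (b) for $\FF$. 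This completes the induction.
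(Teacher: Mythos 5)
Your proof is correct, and it takes a genuinely different (and more elementary) route than the paper. The paper argues via the intersection graph $I$ of $\FF$: since $T$ is a tree, $I$ is chordal and hence perfect, so if there is no independent set of size $\ell+1$ then $I$ has a cover by at most $\ell$ cliques, and each clique of pairwise-intersecting subtrees has a common node by the Helly property; those $\leq\ell$ common nodes form the hitting set. You instead root $T$, take $T^{\ast}\in\FF$ whose shallowest vertex $v=\rho(T^{\ast})$ is deepest, show that every member of $\FF$ meeting $T^{\ast}$ must pass through $v$, and recurse on $\FF'=\set{T'\in\FF\colon v\notin V(T')}$ with parameter $\ell-1$; the two branches of the inductive outcome give (a) and (b) respectively. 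The key step --- that if $u\in V(T^{\ast})\cap V(T')$ then $v\in V(T')$ --- is correct: both $v$ and $\rho(T')$ are ancestors of $u$ and hence comparable, maximality of $v$'s depth puts $v$ on the ancestor path from $u$ to $\rho(T')$, and that path lies in $T'$ by connectivity. Your approach avoids invoking perfection of chordal graphs and the Helly property as black boxes, at the cost of a short induction; the paper's version is more compact but relies on heavier tools. Both are standard and both are correct.
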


\begin{proof}
Let $I$ be the intersection graph of $\FF$. Since $T$ is a tree, $I$ is chordal and thus perfect. If $\alpha(I) \geq \ell + 1$, then (a) occurs. Otherwise $\alpha(I) \leq \ell$. Since $I$ is perfect, it has a partition $X_1, \dotsc, X_{r}$ into cliques where $r \leq \ell$. For each $i$, the subtrees in $X_i$ are pairwise intersecting. By the Helly property, there is a node $x_i \in V(T)$ in every subtree in $X_i$. Then $S \coloneqq \set{x_1, \dotsc, x_r}$ meets every subtree in $\FF$.
\end{proof}

In the setting of $\OO(\sqrt{n})$ blow-ups we need the following hitting set lemma due to \citet*{AST90}. Let $\FF$ be the collection of connected subgraphs of $G$ that intersect all of $A_1, \dotsc, A_k$. \Cref{FindTree} says that $\FF$ either contains a small graph or has a small hitting set.

\begin{lem}[{\cite[(1.2)]{AST90}}] 
	\label{FindTree}
	Let $G$ be a graph, $A_1, \dotsc, A_k$ be non-empty subsets of $V(G)$, and $x \geq 1$ be a real. Then either\textnormal{:}
	\begin{enumerate}[\textnormal{(}a\textnormal{)}]
		\item there is a tree $X$ in $G$ with $\abs{V(X)} \leq x$ such that $V(X) \cap A_i \neq \emptyset$ for each $i$, or
		\item there is a set $Y$ of at most $(k - 1) \abs{V(G)}/x$ vertices such that no component of $G - Y$ intersects all of $A_1, \dotsc, A_k$. 
	\end{enumerate}
\end{lem}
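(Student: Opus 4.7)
The plan is to prove \cref{FindTree} by induction on $k$. The base case $k = 1$ is immediate: since $A_1$ is nonempty, any single vertex $v \in A_1$ is a tree with $|V(X)| = 1 \le x$ (using $x \ge 1$), so (a) holds trivially.

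For the inductive step, I would apply the inductive hypothesis to the $k-1$ sets $A_1, \dotsc, A_{k-1}$ with the same parameter $x$. This yields one of two outcomes. If it produces a set $Y_0 \subseteq V(G)$ of size at most $(k-2)|V(G)|/x$ such that no component of $G - Y_0$ meets all of $A_1, \dotsc, A_{k-1}$, then in particular no component of $G - Y_0$ meets all of $A_1, \dotsc, A_k$, so $Y = Y_0$ satisfies (b) (with room to spare). Otherwise, the inductive hypothesis supplies a tree $X_0$ in $G$ with $|V(X_0)| \le x$ meeting each of $A_1, \dotsc, A_{k-1}$.

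In the latter case, if $V(X_0) \cap A_k \ne \emptyset$ then $X_0$ itself witnesses (a), so assume $V(X_0) \cap A_k = \emptyset$. I would then grow $X_0$ by breadth-first search, defining $L_j$ to be the set of vertices at graph distance exactly $j$ from $V(X_0)$, so $L_0 = V(X_0)$. If some vertex $a \in A_k$ lies in $L_j$ with $j \le x - |V(X_0)|$, then appending a shortest $V(X_0)$--$a$ path to $X_0$ gives a tree of size at most $x$ meeting every $A_i$, so (a) holds. Otherwise, the layers $L_1, \dotsc, L_{\lceil x - |V(X_0)| \rceil}$ collectively separate $X_0$ from $A_k$ within the component of $X_0$; by pigeonhole among these layers (whose vertices are disjoint and lie in $V(G)$), some $L_j$ has size at most $|V(G)|/x$, and then $Y := Y_0 \cup L_j$ has size at most $(k-2)|V(G)|/x + |V(G)|/x = (k-1)|V(G)|/x$, as required.

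The main obstacle is the careful bookkeeping to ensure the totals add up to exactly $(k-1)|V(G)|/x$, and in particular that components of $G - Y_0$ not containing $X_0$ do not spoil (b). I expect this is handled by applying the induction component-by-component (or by a finer induction on $|V(G)| + k$), so that within each component of $G$ one invokes the hypothesis afresh, with the resulting separators then being disjoint across components and hence summing as required. An additional subtlety is that the extension tree produced from $X_0$ together with a shortest path might share vertices with $X_0$ or have a path length that needs to be counted carefully; this is handled by noting that a shortest path of edge-length $j$ from $V(X_0)$ contributes exactly $j$ new vertices, so the resulting tree has at most $|V(X_0)| + j \le x$ vertices, exactly matching the budget.
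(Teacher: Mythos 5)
The paper does not prove this lemma; it is a direct citation of \citet[(1.2)]{AST90}, so there is no in-paper proof to compare against. Evaluating your argument on its own terms, it has a genuine gap at the pigeonhole step. After the inductive hypothesis returns a tree $X_0$ with $\abs{V(X_0)} \le x$ meeting $A_1, \dotsc, A_{k-1}$, you take the BFS layers $L_1, \dotsc, L_m$ from $V(X_0)$, where $m = \ceil{x - \abs{V(X_0)}}$, and claim that some layer has size at most $\abs{V(G)}/x$. But pigeonhole over $m$ pairwise-disjoint sets only gives a layer of size at most $\abs{V(G)}/m$, and since $\abs{V(X_0)} \ge 1$ we always have $m < x$. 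Worse, the inductive hypothesis allows $\abs{V(X_0)}$ to be as large as $\floor{x}$, in which case $m \le 1$ and the pigeonhole bound degenerates to $\abs{V(G)}$ rather than $\abs{V(G)}/x$. There is also a smaller slip: in the branch where the inductive call returned a tree, no set $Y_0$ was produced, so $Y \coloneqq Y_0 \cup L_j$ is undefined.

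These are not bookkeeping issues. Passing the same parameter $x$ to the inductive call leaves essentially no layer budget when $X_0$ is large, while shrinking it to some $x' < x$ inflates the cut returned by induction to $(k-2)\abs{V(G)}/x'$, and the two effects do not trade off to give $(k-1)\abs{V(G)}/x$. The further concern you flag — that a component of $G - L_j$ not containing $X_0$ might still meet all of $A_1, \dotsc, A_k$ — is also real: removing a single layer can split $X_0$'s component into several pieces, more than one of which can meet every $A_i$, and applying the lemma component-by-component beforehand does not help because those pieces only arise after the deletion. The actual argument in \citet{AST90} takes a different route, and I would recommend consulting their proof of (1.2) rather than trying to repair the BFS/pigeonhole scheme.
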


The next result is a straightforward extension of \cref{FindTree}.

\begin{lem}
	\label{FindTrees}
	Let $G$ be a graph, $A_1, \dotsc, A_k$ be non-empty subsets of $V(G)$, $x \geq 1$ be a real, and $\ell \geq 1$ be an integer. Then either\textnormal{:}
	\begin{enumerate}[\textnormal{(}a\textnormal{)}]
		\item there are pairwise disjoint trees $X_1, \dotsc, X_\ell$ in $G$ with $\abs{V(X_j)} \leq x$ and such that $V(X_j) \cap A_i \neq \emptyset$ for each $i$ and $j$, or
		\item there is a set $Y$ of at most $(\ell - 1)x + (k - 1)\abs{V(G)}/x$ vertices such that no component of $G - Y$ intersects all of $A_1, \dotsc, A_k$. 
	\end{enumerate}
\end{lem}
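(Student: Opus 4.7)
The plan is to induct on $\ell$, using \cref{FindTree} to peel off one tree at a time. The base case $\ell = 1$ is exactly \cref{FindTree}, since $(\ell - 1)x = 0$ and the bounds match.

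For the inductive step, apply \cref{FindTree} to $G$, $A_1, \dotsc, A_k$, and $x$. If outcome (b) occurs, we obtain a hitting set of size at most $(k - 1)\abs{V(G)}/x$, which trivially satisfies the stronger bound $(\ell - 1)x + (k - 1)\abs{V(G)}/x$, so we are done. Otherwise outcome (a) gives a tree $X_1$ in $G$ with $\abs{V(X_1)} \leq x$ and $V(X_1) \cap A_i \neq \emptyset$ for each $i$. Set $G' \coloneqq G - V(X_1)$ and $A_i' \coloneqq A_i \setminus V(X_1)$, and apply the inductive hypothesis to $G'$, $A_1', \dotsc, A_k'$, $x$, and $\ell - 1$. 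Either this produces $\ell - 1$ pairwise disjoint trees $X_2, \dotsc, X_\ell$ in $G'$, each of order at most $x$ and meeting every $A_i'$ (and hence every $A_i$), which together with $X_1$ yield outcome (a); or it produces a set $Y' \subseteq V(G')$ with
\begin{equation*}
\abs{Y'} \leq (\ell - 2)x + (k - 1)\abs{V(G')}/x \leq (\ell - 2)x + (k - 1)\abs{V(G)}/x
\end{equation*}
such that no component of $G' - Y'$ meets all of $A_1', \dotsc, A_k'$. Setting $Y \coloneqq V(X_1) \cup Y'$ then gives $G - Y = G' - Y'$, and since every component $C$ of $G - Y$ lies in $V(G')$ we have $C \cap A_i = C \cap A_i'$, so no component meets all of $A_1, \dotsc, A_k$; and $\abs{Y} \leq x + (\ell - 2)x + (k - 1)\abs{V(G)}/x = (\ell - 1)x + (k - 1)\abs{V(G)}/x$, as required.

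The only subtlety — and the main potential obstacle — is that some $A_i'$ may be empty, in which case \cref{FindTree} is not directly applicable to $G'$. This is easily sidestepped: if $A_i' = \emptyset$ for some $i$, then $A_i \subseteq V(X_1)$, so choosing $Y \coloneqq V(X_1)$ already ensures that no component of $G - Y$ meets $A_i$, and $\abs{Y} \leq x \leq (\ell - 1)x + (k - 1)\abs{V(G)}/x$ (using $\ell \geq 1$ and $k \geq 1$). So outcome (b) holds. With this edge case absorbed, the induction goes through and the result follows.
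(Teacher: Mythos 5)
Your proof is correct and follows essentially the same strategy as the paper's: induction on $\ell$ using \cref{FindTree} to peel off small trees one at a time, then absorbing them into the hitting set. The only difference is cosmetic — the paper first invokes the inductive hypothesis to obtain $\ell-1$ trees and then applies \cref{FindTree} once to $G$ minus their union, whereas you apply \cref{FindTree} once to $G$ and then the inductive hypothesis to $G - V(X_1)$ — and your explicit handling of the case where some $A_i'$ becomes empty (and hence \cref{FindTree} is inapplicable to $G'$) is a genuine edge case the paper leaves implicit.
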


\begin{proof}
We proceed by induction on $\ell$. \Cref{FindTree} proves the result if $\ell = 1$. Now assume that $\ell \geq 2$ and the result holds for $\ell - 1$. 
If outcome (b) holds for $\ell - 1$, then the same set $Y$ satisfies outcome (b) for $\ell$. 
So assume that (a) holds for $\ell - 1$. That is, there are pairwise disjoint trees $X_1, \dotsc, X_{\ell - 1}$ in $G$ with $\abs{V(X_j)} \leq x$ and such that $V(X_j) \cap A_i \neq \emptyset$ for each $i$ and $j$.
Apply \cref{FindTree} to $G' \coloneqq  G - V(X_1 \cup \dotsb \cup X_{\ell-1})$. 
If there is a tree $X_\ell$ in $G'$ with $\abs{V(X_\ell)} \leq x$ such that $V(X_\ell) \cap A_i \neq \emptyset$ for each $i$, then $X_1, \dotsc, X_\ell$ are the desired set of trees, and outcome (a) holds. Otherwise there exists $Y' \subseteq V(G')$ with $\abs{Y'} \leq (k - 1) \abs{V(G)}/x$ such that no component of $G' - Y'$ intersects all of $A_1, \dotsc, A_k$. 
Let $Y \coloneqq  V(X_1 \cup \dots \cup X_{\ell-1}) \cup Y'$. 
Thus $\abs{Y} \leq (\ell - 1) x + (k - 1) \abs{V(G)}/x$ and no component of $G - Y$ intersects all of $A_1, \dotsc, A_k$ (since $G' - Y' = G - Y$). That is, $Y$ satisfies (b). 
\end{proof}

\section{\large\boldmath Main theorem and 
\texorpdfstring{$\OO(\tw(G))$}{O(tw(G)} blow-up}

We now prove our main technical theorem and deduce \cref{KtMinorFreetw,KstMinorFreetw} from it.

The following definition allows the $K_t$-minor-free and $K^{\ast}_{s, t}$-minor-free cases to be combined. Let \defn{$\JJ_{s, t}$} be the class of graphs $G$ whose vertex-set has a partition $A \cup B$, where $\abs{A} = s$ and $\abs{B} = t$, $A$ is a clique, every vertex in $A$ is adjacent to every vertex in $B$, and $G[B]$ is connected. A graph is \defn{$\JJ_{s,t}$-minor-free} if it contains no graph in $\JJ_{s,t}$ as a minor. The following is our main theorem.

\begin{thm}\label{JJstMain}
Let $s, t \geq 2$ be integers, $G$ be a $\JJ_{s, t}$-minor-free graph, and $(T, \WW)$ be a tree-decomposition of $G$. Then $G$ has a partition of $\WW$-width at most $t - 1$ and treewidth at most $s$.
\end{thm}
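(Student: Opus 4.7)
The plan is to prove, by induction on $(|V(G)|, s - r)$ in lexicographic order, the following strengthening of the theorem: for every $r \in \{0, 1, \dotsc, s\}$ and every $K_r$-model $(U_1, \dotsc, U_r)$ in $G$ whose branch sets each have $\WW$-width at most $t - 1$, $G$ admits a partition rooted at $(U_1, \dotsc, U_r)$ of $\WW$-width at most $t - 1$ whose partition graph has a tree-decomposition of width at most $s$ in which some bag contains $\{U_1, \dotsc, U_r\}$. The theorem itself is the case $r = 0$. Write $G' := G - \bigcup_{i = 1}^{r} U_i$ and, for each component $C$ of $G'$, set $F_C := \{i : U_i$ is adjacent to $C\}$; the trivial case $V(G') = \emptyset$ gives the partition $(U_1, \dotsc, U_r)$ directly.

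In the generic situation I decompose. If $r = s$, or every component $C$ of $G'$ satisfies $F_C \subsetneq \{1, \dotsc, r\}$, then for each $C$ I apply the inductive hypothesis to $G_C^{\ast} := G[V(C) \cup \bigcup_{i \in F_C} U_i]$ starting from the $K_{|F_C|}$-model $(U_i : i \in F_C)$. Distinct components of $G'$ share no edges, so the child partitions glue along the common $U_i$'s: the combined tree-decomposition introduces a new root bag $\{U_1, \dotsc, U_r\}$ (of size $r \leq s$) joined by an edge to each child's root bag (which, by the inductive guarantee, contains $\{U_i : i \in F_C\}$), keeping width at most $s$. This recursion has $|V(G_C^{\ast})| < |V(G)|$ unless there is a single component $C$ with $r = s$ and $F_C = \{1, \dotsc, s\}$, which I call the \emph{bad case}. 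At $r = 0$, one picks any singleton as $U_1$ to initialise and then applies the next step.

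Otherwise---either $r < s$ and some component $C$ has $F_C = \{1, \dotsc, r\}$, or we are in the bad case---I enlarge the partition using \cref{TreeHit}. For each $i \in F_C$, fix an edge $e_i = u_i v_i \in E_G(U_i, V(C))$ and let $T(e_i)$ be the subtree of $T$ whose nodes are the bags containing both $u_i$ and $v_i$; apply \cref{TreeHit} to $\FF := \{T(e_i) : i \in F_C\}$ with $\ell = t - 1$. In the hitting-set case, a set $S \subseteq V(T)$ of size at most $t - 1$ meets every $T(e_i)$, so $U := \bigcup_{x \in S} W_x \cap V(C)$ has $\WW$-width at most $t - 1$ and contains, for each $i$, a vertex adjacent to $U_i$ (an endpoint of $e_i$). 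If $r < s$, I augment $U$ to a connected subset of the same $\WW$-width and use it as $U_{r+1}$, decreasing $s - r$; if $r = s$, I add $U$ as a non-model part and recurse on the strictly smaller $G - U$, and a refinement of $\FF$ that ranges over all edges from each $U_i$ into $V(C)$ ensures that every component of $C - U$ misses some $U_i$, sending the recursion back to the decomposition step.

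The main obstacle is the alternative of \cref{TreeHit}: from $t$ pairwise vertex-disjoint members $T(e_{i_1}), \dotsc, T(e_{i_t})$ of the refined $\FF$ I must extract a $\JJ_{s, t}$-minor of $G$, contradicting the hypothesis. The witnessing vertices $v_{i_1}, \dotsc, v_{i_t}$ are distinct vertices of $C$, and since $C$ is connected and the refined family forces every $U_i$ to have many witnessed attachments into $C$, I expect these to complete into $t$ pairwise disjoint connected branch sets in $C$ each adjacent to every $U_i$; together with $(U_1, \dotsc, U_s)$ these exhibit the required minor. Making this extraction precise is the crux of the argument, closely coupled with the technical step of augmenting $U$ to a connected set in the extension case without exceeding $\WW$-width $t - 1$---both are where I anticipate the proof will need the most care, likely through a structured (e.g.\ subtree) hitting set rather than an arbitrary one.
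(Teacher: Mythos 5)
Your high-level plan --- induction with a rooted $K_r$-model, decomposing into components or using \cref{TreeHit} --- is close in spirit to the paper's, but the two places you flag as needing care are genuine gaps, and there is a third you do not flag. First, your hitting family $\FF$ is indexed by edges $e$ from the $U_i$'s into $C$, with $T(e)$ the subtree of bags containing both endpoints of $e$. This is too coarse: $t$ pairwise-disjoint members of the refined family give you $t$ edges from $\bigcup_i U_i$ into $C$, but nothing forces these to leave \emph{distinct} $U_i$'s (they could all leave $U_1$), and in any case each edge contributes exactly one adjacency, whereas a $\JJ_{s,t}$-minor needs each of $t$ disjoint connected branch sets in $C$ to be adjacent to \emph{every} $U_i$. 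No minor emerges. The paper instead takes $\FF$ to be the family of all \emph{connected subgraphs} $F$ of $G-U$ that meet every $A_i$, with $T_F$ the subtree of bags meeting $V(F)$. Now disjointness of $T_{F_1},\dotsc,T_{F_t}$ forces $F_1,\dotsc,F_t$ to be pairwise disjoint, and each $F_j$ is already connected and hits every $A_i$; extending them to a partition $Q_1,\dotsc,Q_t$ of $V(G)\setminus U$ (possible because $G-U$ is connected) and contracting gives a member of $\JJ_{s,t}$ outright. Second, the step ``augment $U$ to a connected subset of the same $\WW$-width and use it as $U_{r+1}$'' cannot in general be carried out: the $\leq t-1$ bags whose union contains your hitting set need not lie near one another in $T$, and joining them inside $C$ may require vertices from arbitrarily many additional bags. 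The paper avoids any such augmentation when $r<s$ by a different dichotomy: if some $T_{F_1},T_{F_2}$ are disjoint, a minimal separator $S\subseteq W_x\cap W_y$ (of $\WW$-width $1$) splits $G-U$, each side is \emph{contracted} with $S$ to a new branch vertex $v_j$, and the pieces are glued by a clique-sum; otherwise Helly's property gives a single bag hitting all of $\FF$, providing $Y$ of $\WW$-width $1$.

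The unacknowledged gap is in your $r=s$ case: you add $U$ as a ``non-model part'' and recurse on $G-U$. But vertices of $U$ are adjacent to vertices inside the components of $G-U-\bigcup_i U_i$, so in the resulting partition $U$ is adjacent to parts produced by the recursion, and the recursion's width-$s$ tree-decomposition has no bag containing $U$ together with those parts; attaching $U$ afterwards therefore destroys the treewidth bound. The paper's recursion instead builds a placeholder for $Y$ (your $U$) \emph{before} recursing: taking $Y$ minimal subject to no component of $G-U-Y$ meeting every $A_i$, for each component $G_j$ missing some $A_{i'}$, every $w\in Y$ with a neighbour in $G_j$ can be joined to $A_{i'}$ by a path $P_w$ avoiding $U\cup V(G_j)$ (this is exactly where minimality is used); the union $Z_j$ of $U_{i'}$ and these paths is connected and disjoint from $G_j$, is contracted to a vertex $z_j$, and $(\{z_j\},U_i:i\neq i')$ is the $K_s$-model used for the recursion on $G'_j$. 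The $z_j$'s are then identified and relabelled $Y$, and the pieces combine by genuine clique-sums. Without this device your induction does not close.
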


This says that, given a $\JJ_{s, t}$-minor-free $G$ and a tree-decomposition $(T, \WW)$ of $G$, there is a simple (low treewidth) partition that is also simple with respect to $\WW$.  \Cref{JJstMain} follows immediately from the next lemma (for example, by taking $r = 1$ and $U_1$ to consist of a single vertex).

\begin{lem}
\label{JJstMainRooted}
Let $s, t \geq 2$ be integers, $G$ be a $\JJ_{s, t}$-minor-free graph, and $(T, \WW)$ be a tree-decomposition of $G$. Suppose that $(U_1, \dotsc, U_r)$ is a $K_r$-model of $\WW$-width at most $t - 1$ where $r \leq s$. Then $G$ has a partition of $\WW$-width at most $t - 1$ and treewidth at most $s$ that is rooted at $(U_1, \dotsc, U_r)$. 
\end{lem}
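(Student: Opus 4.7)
I would proceed by induction on $(\abs{V(G)}, s - r)$ in lexicographic order. For the base case, if $V(G) = \bigcup_i U_i$, then the partition $\set{U_1, \dotsc, U_r}$ itself works: its partition graph is $K_r$ (treewidth $r - 1 \leq s$), and each $U_i$ has $\WW$-width at most $t - 1$ by hypothesis. Otherwise, let $C_1, \dotsc, C_k$ be the components of $G - \bigcup_i U_i$ and set $I_j \coloneqq \set{i \in \set{1, \dotsc, r} \colon N_G(V(C_j)) \cap U_i \neq \emptyset}$.

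If some $I_{j^*}$ is a strict subset of $\set{1,\dotsc,r}$ (Case A), I would apply the inductive hypothesis to $G_1 \coloneqq G - V(C_{j^*})$ rooted at $(U_1, \dotsc, U_r)$ and to $G_2 \coloneqq G[V(C_{j^*}) \cup \bigcup_{i \in I_{j^*}} U_i]$ rooted at $(U_i \colon i \in I_{j^*})$; both have strictly fewer vertices than $G$ ($G_1$ since $C_{j^*}$ is non-empty, $G_2$ since some $U_k$ with $k \notin I_{j^*}$ is excluded). The clique-sum of the resulting partitions along the common clique $(U_i \colon i \in I_{j^*})$ yields the desired partition of $G$: treewidth is preserved by clique-sums, the $\WW$-width is the maximum across pieces, and all edges are covered since edges leaving $V(C_{j^*})$ go only into $\bigcup_{i \in I_{j^*}} U_i \subseteq V(G_2)$.

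Otherwise every $I_j = \set{1,\dotsc,r}$ (Case B). Fix a component $C$ and apply \cref{TreeHit} to $\set{T_v \colon v \in V(C)}$, where $T_v = \set{x \in V(T) \colon v \in W_x}$, with $\ell = t - 1$. If it yields a hitting set $X$ of size at most $t - 1$, then $V(C) \subseteq \bigcup_{x \in X} W_x$ has $\WW$-width at most $t - 1$, is connected, and is adjacent to every $U_i$. When $r < s$, I would take $U_{r+1} \coloneqq V(C)$ and recurse (the secondary measure $s - r$ drops); when $r = s$, I would add $V(C)$ as a new part, forming a $K_{s+1}$ with the $U_i$'s in the partition graph (still of treewidth $s$, by including $V(C)$ in a bag containing the $(U_1, \dotsc, U_s)$-clique), and recurse on $G - V(C)$ rooted at $(U_1, \dotsc, U_s)$ (the primary measure drops). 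Otherwise \cref{TreeHit} produces $t$ pairwise node-disjoint subtrees $T_{v_1}, \dotsc, T_{v_t}$, giving vertices $v_1, \dotsc, v_t \in V(C)$ that share no bag of $\WW$; using a spanning tree of $G[V(C)]$ the aim is to partition $V(C)$ into connected subtrees $V_1, \dotsc, V_t$ with $v_j \in V_j$, each adjacent to every $U_i$---the contracted graph on $\set{V_1, \dotsc, V_t}$ is automatically connected since $V(C)$ is---producing a $\JJ_{r, t}$-minor, or (after extending to a full $s$-clique-model when $r < s$) a $\JJ_{s, t}$-minor, contradicting the hypothesis.

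The main obstacle lies precisely in this final sub-case: ensuring that the pieces $V_1, \dotsc, V_t$ can be chosen so that each is connected and meets every $N_i \coloneqq N_G(U_i) \cap V(C)$. A naive Voronoi-style partition around the seeds $v_j$ in the spanning tree of $C$ yields connected pieces but provides no such adjacency guarantee. I expect the full argument to require a refined tool---perhaps \cref{FindTrees} applied inside $G[V(C)]$ to produce $t$ disjoint trees already hitting every $N_i$, which are then grown inside a spanning tree of $C$ to cover $V(C)$---together with an additional step that promotes a $\JJ_{r, t}$-minor to a $\JJ_{s, t}$-minor when $r < s$.
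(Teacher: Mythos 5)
Your Case A and the base case are sound, and you have correctly located the critical obstruction: when \cref{TreeHit} fails on $\set{T_v \colon v \in V(C)}$, the resulting $t$ vertices $v_1,\dots,v_t$ carry no information about adjacency to the $U_i$'s, so there is no way to grow them into $t$ disjoint connected sets each meeting every $A_i$, and hence no $\JJ_{s,t}$-minor. Your proposed repair via \cref{FindTrees} does not close this: that lemma is about \emph{bounded-size} trees (for the $\OO(\sqrt n)$ version), which is irrelevant to $\WW$-width, and even with disjoint trees hitting every $A_i$, your plan of ``growing them to cover $V(C)$'' is exactly the step that cannot be done without a connectivity assumption on $G-U$ as a whole, and your promotion from $\JJ_{r,t}$ to $\JJ_{s,t}$ when $r<s$ is unsubstantiated (no spare disjoint connected sets are available, as the other components of $G-U$ are not adjacent to $C$). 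There is a second structural mismatch: your Case B tries to realise the \emph{entire} component $V(C)$ as one part of $\WW$-width at most $t-1$, but $V(C)$ will usually be far too large for that, so the hitting-set branch of your dichotomy essentially never fires; the inductive step therefore has nowhere productive to go.

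The paper proceeds differently on both counts. Instead of the singleton subtrees $T_v$, it applies the tree-Helly machinery to the family $\FF$ of \emph{all} connected subgraphs $F$ of $G-U$ that meet every $A_i$, via their associated subtrees $T_F$. For $r=s$, excluding $\JJ_{s,t}$ gives no $t$ pairwise disjoint $F$'s, hence (since disjoint $T_F$'s give disjoint $F$'s) no $t$ disjoint $T_F$'s, and \cref{TreeHit} yields $S\subseteq V(T)$ with $\abs{S}\le t-1$ so that $Y\coloneqq\bigcup_{x\in S}W_x$ hits every $F\in\FF$. For $r<s$ it does not use \cref{TreeHit} at all: either two $T_{F}$'s are disjoint (then a minimal separator $S\subseteq W_x\cap W_y$ is contracted together with one side into a new model vertex, giving a $K_{r+1}$-model on a strictly smaller graph---so no lexicographic induction is needed), or all $T_F$'s pairwise meet and the Helly property gives a single bag $Y=W_x$. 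Crucially, $Y$ is then a \emph{separator}, not the whole component: it is chosen so that no component $G_j$ of $G-U-Y$ meets every $A_i$. The final recursion is the step you are missing entirely: for each $G_j$ with missing neighbourhood $A_{i'}$, paths from $Y_j$ to $A_{i'}$ avoiding $U\cup V(G_j)$ are contracted together with $U_{i'}$ into a single vertex $z_j$, giving a new $K_r$-model $(\set{z_j},U_i\colon i\neq i')$ of $\WW_j$-width at most $t-1$ on a smaller graph; the pieces are then glued by clique-sums with the $z_j$'s identified and replaced by $Y$. This construction is what lets the induction carry the $\WW$-width bound through. I would recommend working through that $z_j$ step; everything else in your sketch can be salvaged.
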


\begin{proof}
Let $U \coloneqq U_1 \cup \dotsb \cup U_r$. 
We proceed by induction on $\abs{V(G)}$. If $V(G) = U$, then $(U_1, \dotsc, U_r)$ is the desired partition $H$ where $H = K_r$ has treewidth $r - 1 \leq s$. Now assume that $V(G) \setminus U \neq \emptyset$. Let $A_i \coloneqq N_{G}(U_i) \setminus U$ for each $i$.

First suppose that some $A_i$ is empty, say $A_1 = \emptyset$. By induction, $G - U_1$ has a partition $H_1$ of $\WW$-width at most $t - 1$ and treewidth at most $s$ that is rooted at $(U_2, \dotsc, U_r)$. Add a new part $U_1$ adjacent to each of $U_2, \dotsc, U_r$ to obtain the desired $H$-partition of $G$. The neighbourhood of $U_1$ is a clique on $r - 1$ vertices, so $\tw(H) = \max\set{\tw(H_1), r - 1} \leq s$. Thus we may assume that $A_i$ is non-empty for all $i$.

Next suppose that $G - U$ is disconnected. Then there is a partition $U, V_1, V_2$ of $V(G)$ into three non-empty sets such that there is no edge between $V_1$ and $V_2$. Let $G_1 \coloneqq G[U \cup V_1]$ and $G_2 \coloneqq G[U \cup V_2]$. For $j \in \set{1, 2}$, let $\WW_j$ be the tree-decomposition of $G_j$ obtained from $\WW$ by deleting all the vertices of $G$ not in $G_j$.  
By induction, each $G_j$ has a partition $H_j$ of $\WW_j$-width at most $t - 1$ and treewidth at most $s$ that is rooted at $(U_1, \dotsc, U_r)$. Let $H$ be the partition of $G$ obtained from $H_1$ and $H_2$ by identifying the vertex $U_i$ in $H_1$ with the vertex $U_i$ in $H_2$ for each $i$. The graph $H$ is a clique-sum of $H_1$ and $H_2$, so $\tw(H) = \max\set{\tw(H_1), \tw(H_2)} \leq s$. Since every bag of $\WW_1$ and $\WW_2$ is a subset of a bag of $\WW$, the partition $H$ has $\WW$-width at most $t - 1$. Thus we may assume that $G - U$ is connected.

We now show there exists a set $Y \subseteq V(G) \setminus U$ of $\WW$-width at most $t - 1$ such that 
\begin{equation}\label{eq:Ymain}
\textnormal{ no component of } G - U - Y  \textnormal{ meets every } A_i. \tag{$\dagger$}
\end{equation}
Let $\FF$ be the collection of all connected subgraphs $F$ of $G - U$ such that $V(F) \cap A_i \neq \emptyset$ for all $i$. For each $F\in \FF$, let $T_F \coloneqq T[\set{x \in V(T) \colon W_x \cap V(F) \neq \emptyset}]$. Since $F$ is connected, each $T_F$ is a (connected) subtree of $T$. 

First consider the case $r \leq s - 1$.

First suppose there exists $F_1, F_2 \in \FF$ such that $T_{F_1}$ and $T_{F_2}$ are disjoint. Let $xy$ be any edge of $T$ on the shortest path between $T_{F_1}$ and $T_{F_2}$. Then $W_x \cap W_y$ separates\footnote{Given a graph $G$ and $V_1, V_2 \subseteq V(G)$, a set $S$ \defn{separates} $V_1$ and $V_2$ if no connected component of $G - S$ contains a vertex of both $V_1$ and $V_2$.} $V(F_1)$ and $V(F_2)$. Let $S$ be a minimal subset of $W_x \cap W_y$ that separates $V(F_1)$ and $V(F_2)$. By construction, $S$ has $\WW$-width 1, $S \cap V(F_1) = \emptyset$, and $S \cap V(F_2) = \emptyset$. Then there is a partition $S \cup V_1 \cup V_2$ of $V(G) \setminus U$ such that $V(F_1) \subseteq V_1$, $V(F_2) \subseteq V_2$ and there is no edge between $V_1$ and $V_2$. We now show that $G[S \cup V_1]$ and $G[S \cup V_2]$ are connected. Consider some $s \in S$. Since $S$ is minimal, there is a path from $s$ to $V(F_1)$ internally disjoint from $S \cup V(F_2)$. Since there is no edge between $V_1$ and $V_2$, this path must lie entirely inside $S \cup V_1$. Since $F_1$ is connected, between any two vertices of $S$ there is a path entirely inside $S \cup V_1$. Since $G - U$ is connected, there is a path from any vertex of $V_1$ to $S$ inside $S \cup V_1$. Hence $G[S \cup V_1]$ is connected. Similarly for $G[S \cup V_2]$. For $j \in \set{1, 2}$, let $G_j$ be the graph obtained from $G$ by contracting all of $S \cup V_j$ into a single vertex $v_j$. Each $G_j$ is a minor of $G$ and thus is $\JJ_{s, t}$-minor-free. Furthermore, since $V(F_j) \subseteq V_j$, $(U_1, \dotsc, U_r, \set{v_j})$ is a $K_{r + 1}$-model in $G_j$. Let $\WW_j$ be the tree-decomposition of $G_j$ obtained from $\WW$ by replacing every instance of a vertex in $S \cup V_j$ by $v_j$. By induction, each $G_j$ has a partition $H_j$ of $\WW_j$-width at most $t - 1$ and treewidth at most $s$ that is rooted at $(U_1, \dotsc, U_r, \set{v_j})$. Let $H$ be obtained from the disjoint union of $H_1$ and $H_2$ where the corresponding $U_i$ are identified and the vertices $v_1$ and $v_2$ from $H_1$ and $H_2$ are identified and replaced by $S$. If $X \subseteq V(G_j) \setminus \set{v_j}$ is a subset of a  bag of $\WW_j$, then $X$ is a subset of a bag of $\WW$. So if $X \subseteq V(G_j) \setminus \set{v_j}$ has $\WW_j$-width at most $t - 1$, then $X$ has $\WW$-width at most $t - 1$. Since $S$ also has $\WW$-width at most $t - 1$, the partition $H$ has $\WW$-width at most $t - 1$. The graph $H$ is a clique-sum of $H_1$ and $H_2$, so $\tw(H) \leq \max\set{\tw(H_1), \tw(H_2)} \leq s$ and the partition has all the required properties.

Now assume that $T_{F_1}$ and $T_{F_2}$ intersect for all $F_1, F_2 \in \FF$. By the Helly property, there is a node $x\in V(T)$ such that $x\in V(T_{F})$ for all $F\in\FF$. Let $Y \coloneqq W_x$. Then $Y$ has $\WW$-width 1 and intersects every $F \in \FF$. Thus $G - U - Y$ contains no graph of $\FF$ and so every component of $G - U - Y$ avoids some $A_i$. This $Y$ satisfies \eqref{eq:Ymain}.

Now consider the case $r = s$.

Suppose that $\FF$ contains $t$ vertex-disjoint graphs $F_1, \dotsc, F_t$. Since $G - U$ is connected, there is a partition $Q_1, \dotsc, Q_t$  of $V(G) \setminus U$ such that $V(F_i) \subseteq Q_i$ and $G[Q_i]$ is connected, for all $i$. Contract each $Q_i$ to a single vertex $q_i$ and each $U_i$ to a single vertex $u_i$ to get a graph $G'$ with vertex-set $\set{u_1, \dotsc, u_s, q_1, \dotsc, q_t}$. Since $G - U$ is connected, $G'[\set{q_1, \dotsc, q_t}]$ is connected and so $G' \in \JJ_{s, t}$, a contradiction. Hence, there are no $t$ vertex-disjoint graphs in $\FF$. 
For any $F_1, F_2 \in \FF$, if $T_{F_1}$ and $T_{F_2}$ are disjoint, then $F_1$ and $F_2$ are disjoint. So $\set{T_F \colon F \in \FF}$ contains no $t$ pairwise disjoint subtrees. Thus, by \cref{TreeHit}, there is a set $S \subseteq V(T)$ of size at most $t - 1$ that meets every $T_F$. Let $Y \coloneqq \bigcup_{x \in S} W_x$. Then $Y$ has $\WW$-width at most $t - 1$ and intersects every $F \in \FF$. This $Y$ satisfies \eqref{eq:Ymain}.

We have shown in all cases that there exists $Y \subseteq V(G) \setminus U$ satisfying \eqref{eq:Ymain}. Take a minimal such $Y$ and let $G_1, \dotsc, G_q$ be the components of $G - U - Y$. Consider each $G_j$ in turn. Let $Y_j$ be the set of vertices $w \in Y$ that have a neighbour in $G_j$. By \eqref{eq:Ymain}, there exists $i'$ such that $A_{i'} \cap V(G_j) = \emptyset$. Since $G - U$ is connected and $A_{i'}$ is non-empty, both $Y$ and $Y_j$ are non-empty. We claim that for each $w \in Y_j$ there is a path $P_w$ from $w$ to $A_{i'}$ that avoids $U \cup V(G_j)$. By the minimality of $Y$, some component $Q$ of $G - U - (Y \setminus \set{w})$ meets every $A_i$. Since $Y$ satisfies \eqref{eq:Ymain},  $w$ is a cut-vertex of $Q$. Also $w$ has a neighbour in $G_j$, so $G_j$ is a subgraph of $Q$ and, furthermore, $G_j$ is a component of $Q - w$. Since $Q$ meets every $A_{l}$, there is a path $P_{w}$ from $w$ to $A_{i'}$ inside $Q$. But $V(G_j)$ does not meet $A_{i'}$ and $G_j$ is a component of $Q - w$, so $P_{w}$ avoids $G_j$. Also $P_{w}$ is in $Q$, so $P_{w}$ avoids $U$. Hence, $P_{w}$ has the required properties. Let $Z_j$ be the subgraph induced by the union of $U_{i'}$ and all $P_{w}$ (where $w \in Y_j$). By construction, $Z_j$ is connected and disjoint from $V(G_j) \cup (U \setminus U_{i'})$.

Take the subgraph of $G$ induced by $V(G_j) \cup Z_j \cup U$ and contract $Z_j$ into a new vertex $z_j$. Call the graph obtained $G'_j$, which has vertex-set $V(G_j) \cup (U \setminus U_{i'}) \cup \set{z_j}$. Now $(\set{z_j},U_i \colon i \neq i')$ is a $K_r$-model in $G'_j$. Let $\WW_j$ be the tree-decomposition of $G'_j$ obtained from $\WW$ by deleting vertices of $G$ not in $V(G_j)\cup Z_j\cup U$, and then replacing each vertex in $Z_j$ by $z_j$. By induction, $G'_j$ has a partition $H_j$ of $\WW_j$-width at most $t - 1$ and treewidth at most $s$ that is rooted at $(\set{z_j}, U_i \colon i \neq i')$. Add to $H_j$ the vertex $U_{i'}$ adjacent to all other $U_{i}$ and to $\set{z_j}$. Since the neighbourhood of this added vertex is a clique of order $r \leq s$, $H_j$ still has treewidth at most $s$.
Let $H$ be obtained from the disjoint union of $H_1, \dotsc, H_q$, where corresponding $U_i$ are identified and the vertices $z_1, \dotsc, z_q$ from $H_1, \dotsc, H_q$ are identified and replaced by $Y$. Note that if $X \subseteq V(G_j) \setminus \set{z_j}$ is a subset of a  bag of $\WW_j$, then $X$ is a subset of a bag of $\WW$. So if $X \subseteq V(G_j) \setminus \set{z_j}$ has $\WW_j$-width at most $t - 1$, then $X$ has $\WW$-width at most $t - 1$. Since $Y$ has $\WW$-width at most $t - 1$, the partition $H$ has $\WW$-width at most $t - 1$. The graph $H$ is a clique-sum of $H_1, \dotsc, H_q$,  so $\tw(H) \leq \max_j\tw(H_j) \leq s$. 

We finally check that $H$ is a partition of $G$. The vertices $U_1, \dotsc, U_r, Y$ form a clique in $H$ so all edges of $G$ inside $Y \cup U$ appear in $H$. Every edge inside $G_j$ appears in $G'_j - z_j$, thus appears in $H_j$ and hence in $H$. Any edge between $U$ and $G_j$ is, by definition of $i'$, an edge between $G_j$ and $U \setminus U_{i'}$ so appears in $G'_j - z_j$ and hence in $H$. Finally consider edges between $Y$ and $G_j$. Let $vw$ be an edge with $v \in V(G_j)$ and $w \in Y$. By definition, $w \in Y_j$ and so the edge $vz_j$ is present in $G'_j$ and hence in $H_j$. Since $z_j$ is replaced by $Y$, the edge $vw$ is in $H$.
\end{proof}

Applying \cref{JJstMain} to a tree-decomposition of minimum width gives the following corollary.

\begin{thm}
\label{JJstTw}
For all integers $s, t \geq 2$, every $\JJ_{s, t}$-minor-free graph $G$ is isomorphic to a subgraph of $H \boxtimes K_m$, where $\tw(H) \leq s$ and $m \coloneqq (\tw(G) + 1)(t - 1)$.
\end{thm}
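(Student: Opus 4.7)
The plan is to apply \cref{JJstMain} to an optimal tree-decomposition of $G$ and then translate the resulting partition into a subgraph-of-product statement via \cref{ProductPartition}.

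First, I would let $(T, \WW)$ be a tree-decomposition of $G$ of minimum width, so that every bag $W_x$ satisfies $\abs{W_x} \le \tw(G) + 1$. Since $G$ is $\JJ_{s, t}$-minor-free and $s, t \ge 2$, \cref{JJstMain} applies directly and yields a partition $H$ of $G$ with $\WW$-width at most $t - 1$ and $\tw(H) \le s$.

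Next, I would bound the width of the partition $H$ (that is, the size of the largest part). By definition of $\WW$-width, each part of $H$ is contained in a union of at most $t - 1$ bags of $\WW$, each of size at most $\tw(G) + 1$. Hence every part has size at most $(t - 1)(\tw(G) + 1) = m$, so $H$ has width at most $m$. Applying \cref{ProductPartition} (noting that $m$ is already an integer, so $\floor{m} = m$) then gives that $G$ is isomorphic to a subgraph of $H \boxtimes K_m$, which is exactly the conclusion we want.

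There is no real obstacle here: the theorem is an immediate packaging of \cref{JJstMain} with the definition of $\WW$-width and the product–partition correspondence. The only point worth double-checking in writing it up is that the bound on part size comes from multiplying the two width parameters (number of bags times maximum bag size), and that the treewidth bound $\tw(H) \le s$ is transported verbatim from \cref{JJstMain}.
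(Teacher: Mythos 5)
Your proposal is correct and matches the paper's proof essentially verbatim: both take a minimum-width tree-decomposition, apply \cref{JJstMain} to obtain a partition of $\WW$-width at most $t-1$ and treewidth at most $s$, bound each part's size by $(t-1)(\tw(G)+1)$, and conclude via \cref{ProductPartition}. No differences worth noting.
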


\begin{proof}
Let $G$ be a $\JJ_{s, t}$-minor-free graph. Fix a tree-decomposition $(T, \WW)$ of $G$ in which every bag has size at most $\tw(G) + 1$. By \cref{JJstMain}, $G$ has a partition $H$ of $\WW$-width at most $t - 1$ where $\tw(H) \leq s$. Since each bag of $\WW$ has size at most $\tw(G) + 1$, the partition has width at most $(t - 1)(\tw(G) + 1) = m$. Hence, by \cref{ProductPartition}, $G$ is isomorphic to a subgraph of $H \boxtimes K_m$.
\end{proof}

Observe that $\JJ_{t - 2, 2} = \set{K_t}$ so every $K_t$-minor-free graph is $\JJ_{t - 2, 2}$-minor-free. Hence \cref{JJstTw} implies \cref{KtMinorFreetw}. Clearly, $K^{\ast}_{s, t}$ is a subgraph of every graph in $\JJ_{s, t}$ and so every $K^{\ast}_{s, t}$-minor-free graph is $\JJ_{s, t}$-minor-free. Hence, \cref{JJstTw} implies \cref{KstMinorFreetw}.

\section{\large Layered treewidth: planar and apex-minor-free graphs}
\label{LTW}

A \defn{layering} of a graph $G$ is a partition $\LL = (V_1, V_2, \dotsc)$ of $V(G)$ such that for each edge $vw \in E(G)$, if $v \in V_i$ and $w \in V_j$, then $\abs{i - j} \leq 1$.  A layering of $G$ is equivalent to a partition $P$ of $G$ where $P$ is a path. The next observation, first noted in \citep{DJMMUW20}, gives a useful characterisation of when a graph is isomorphic to a subgraph of a product of the form $H \boxtimes P \boxtimes K_m$.

\begin{obs}[\citep{DJMMUW20}]
\label{ProductLayering}
A graph $G$ has a layering $\LL$ and a partition $H$ such that each layer of $\LL$ and each part of $H$ intersect in at most $m$ vertices if and only if $G$ is isomorphic to a subgraph of $H \boxtimes P \boxtimes K_m$ for some path $P$.
\end{obs}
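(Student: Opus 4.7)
The statement is essentially a definition-chase about the strong product, so my plan is to verify each direction by matching the defining conditions of $\boxtimes$ against the conditions on layerings and partitions.

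For the forward direction, I would assume $G$ has a layering $\LL = (V_1, V_2, \dotsc)$ and a partition $H$ satisfying $\abs{X \cap V_i} \leq m$ for every part $X$ and every layer $V_i$. Take $P$ to be the path $p_1 p_2 \dotsb$ corresponding to the layers, and for each $X \in V(H)$ and each $i$ arbitrarily label the vertices of $X \cap V_i$ with distinct colours from $V(K_m) = \set{1, \dotsc, m}$. Define $\phi \colon V(G) \to V(H) \times V(P) \times V(K_m)$ by sending $v \in X \cap V_i$ with label $c$ to the triple $(X, p_i, c)$; this is injective by construction. For each edge $vw \in E(G)$ with $\phi(v) = (X, p_i, c)$ and $\phi(w) = (Y, p_j, c')$, the partition property of $H$ forces $X = Y$ or $XY \in E(H)$, the layering property of $\LL$ forces $\abs{i - j} \leq 1$, and any two distinct colours in $K_m$ are adjacent. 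Reading off the definition of the strong product at each coordinate gives $\phi(v) \phi(w) \in E(H \boxtimes P \boxtimes K_m)$, so $\phi$ is the required embedding.

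For the reverse direction, suppose $\phi \colon V(G) \to V(H) \times V(P) \times V(K_m)$ realises $G$ as a subgraph of $H \boxtimes P \boxtimes K_m$ for some path $P$ with vertex-set $\set{p_1, p_2, \dotsc}$. I would declare the part of $v$ to be the first coordinate of $\phi(v)$ and the layer $V_i$ to be $\phi^{-1}(V(H) \times \set{p_i} \times V(K_m))$. For any edge $vw \in E(G)$, adjacency in the strong product forces the $H$-coordinates of $\phi(v)$ and $\phi(w)$ to be equal or adjacent in $H$ (so $H$ is a partition of $G$ in the sense of \cref{subsec:part}) and forces the $P$-coordinates to be equal or adjacent in $P$, which is exactly $\abs{i-j} \leq 1$ (so $\LL$ is a layering). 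Finally, injectivity of $\phi$ restricts to an injection $X \cap V_i \hookrightarrow \set{X} \times \set{p_i} \times V(K_m)$, giving $\abs{X \cap V_i} \leq m$.

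There is no substantive obstacle to this argument. The only genuine content is the arbitrary labelling of each intersection $X \cap V_i$ by elements of $V(K_m)$ in the forward direction, which is legal precisely because of the $\leq m$ bound. Everything else is a transcription of the definitions of layering, partition, and strong product.
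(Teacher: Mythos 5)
Your argument is the same definition-unwinding the paper uses: read off the three coordinates of the strong-product embedding to get the partition, the layering, and the $\leq m$ bound, and conversely label each intersection $X\cap V_i$ by colours of $K_m$ to build the embedding. The paper merely compresses the forward direction into the phrase ``reversing this identification,'' which you have written out; there is no substantive difference.
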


\begin{proof}
Suppose that $G$ is isomorphic to a subgraph of $H \boxtimes P \boxtimes K_m$ where $V(H) = \set{x_1, \dotsc, x_h}$, $V(P) = \set{y_1, y_2, \dotsc}$, and $V(K_m) = \set{z_1, \dotsc, z_m}$. Then the isomorphism maps each vertex $v$ of $G$ to $(x_{a(v)}, y_{b(v)}, z_{c(v)})$ where $v \mapsto (a(v), b(v), c(v))$ is injective. Let $\LL$ have layers $V_i = \set{v \colon b(v) = i}$ and the partition $H$ have parts $\set{v \colon a(v) = j}$ for $j \in \set{1, \dotsc, h}$. Since $c(v)$ takes at most $m$ values, each layer and part have at most $m$ vertices in common.

Reversing this identification converts a suitable layering $\LL$ and partition $H$ into an isomorphism from $G$ to a subgraph of $H \boxtimes P \boxtimes K_m$.
\end{proof}

\Citet{DMW17} defined the \defn{layered treewidth}, $\ltw(G)$, of $G$ to be the minimum integer $k$ such that $G$ has a layering $\LL$ and tree-decomposition $(T, \WW)$ such that $\abs{L \cap W} \leq k$ for each layer $L \in \LL$ and each bag $W \in \WW$.  \Cref{JJstMain} has the following corollary.

\begin{thm}
\label{JJstLtw}
For all integers $s, t \geq 2$, every $\JJ_{s, t}$-minor-free graph $G$ is isomorphic to a subgraph of $H \boxtimes P \boxtimes K_m$, where $P$ is a path, $\tw(H) \leq s$, and $m \coloneqq (t - 1) \ltw(G)$.
\end{thm}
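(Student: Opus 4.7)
The plan is to combine \cref{JJstMain} with the definition of layered treewidth and then apply \cref{ProductLayering}. Specifically, fix an optimal witness $(\LL,(T,\WW))$ for $\ltw(G)$, so that $\LL$ is a layering of $G$, $(T,\WW)$ is a tree-decomposition of $G$, and $\abs{L\cap W}\leq \ltw(G)$ for every layer $L\in\LL$ and every bag $W\in\WW$.

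Next, feed this tree-decomposition into \cref{JJstMain}. Since $G$ is $\JJ_{s,t}$-minor-free, the theorem produces a partition $H$ of $G$ of $\WW$-width at most $t-1$ and treewidth at most $s$. The key observation is that the $\WW$-width bound interacts nicely with layers: every part $X\in V(H)$ is contained in the union of at most $t-1$ bags $W_{x_1},\dotsc,W_{x_{t-1}}$, so for any layer $L\in\LL$,
\begin{equation*}
    \abs{X\cap L}\leq \sum_{i=1}^{t-1}\abs{W_{x_i}\cap L}\leq (t-1)\ltw(G)=m.
\end{equation*}
Thus each part of $H$ meets each layer of $\LL$ in at most $m$ vertices.

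Finally, invoke \cref{ProductLayering} with this layering $\LL$ and partition $H$: since every layer-part intersection has size at most $m$, $G$ is isomorphic to a subgraph of $H\boxtimes P\boxtimes K_m$ for some path $P$, with $\tw(H)\leq s$ as required. There is no real obstacle here; the work was already done in \cref{JJstMain}, and the content of this theorem is just noticing that a partition with bounded $\WW$-width automatically respects any layering compatible with $\WW$ in the $\ltw$ sense.
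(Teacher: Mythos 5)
Your proof is correct and takes essentially the same approach as the paper: fix a layering and tree-decomposition witnessing $\ltw(G)$, apply \cref{JJstMain} to obtain a partition of $\WW$-width at most $t-1$ and treewidth at most $s$, bound each part--layer intersection by $(t-1)\ltw(G)$, and conclude via \cref{ProductLayering}.
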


\begin{proof}
Let $G$ be a $\JJ_{s, t}$-minor-free graph. Fix a layering $\LL$ and tree-decomposition $(T, \WW)$ of $G$ such that $\abs{L \cap W} \leq \ltw(G)$ for every layer $L \in \LL$ and each bag $W \in \WW$. By \cref{JJstMain}, $G$ has a partition $H$ of $\WW$-width at most $t - 1$ where $\tw(H) \leq s$.

Let $X \in V(H)$ be a part and $L \in \LL$ be a layer. Since the partition has $\WW$-width at most $t - 1$, there are bags $W_1, \dotsc, W_{t - 1} \in \WW$ such that $X \subseteq \bigcup_{i = 1}^{t - 1} W_i$. Since $\abs{L \cap W_i} \leq \ltw(G)$ for each $i$, $\abs{X \cap L} \leq (t - 1) \ltw(G)$. The result now follows from \cref{ProductLayering}.
\end{proof}

Again, since $\JJ_{t - 2, 2} = \set{K_t}$ and $K^{\ast}_{s, t}$ is a subgraph of every graph in $\JJ_{s, t}$, \cref{JJstLtw} has the following corollaries.

\begin{thm}\label{KtMinorFreeLayered}
For any integer $t \geq 2$, every $K_t$-minor-free graph $G$ is isomorphic to a subgraph of $H \boxtimes P \boxtimes K_m$, where $P$ is a path, $\tw(H) \leq t - 2$, and $m \coloneqq \ltw(G)$.
\end{thm}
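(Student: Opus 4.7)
The plan is to derive \cref{KtMinorFreeLayered} as an immediate specialization of \cref{JJstLtw}, mirroring how \cref{KtMinorFreetw} was obtained from \cref{JJstTw}. The first step is to identify the correct parameters: we apply \cref{JJstLtw} with $s := t - 2$ and with the role of the lemma's internal ``$t$'' played by $2$ (since the ``$t$'' in \cref{KtMinorFreeLayered} refers to the excluded minor $K_t$, I will rename it carefully in the write-up to avoid confusion).

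The key observation to justify the step is the identity $\JJ_{t-2,\,2} = \set{K_t}$. Any graph in $\JJ_{t-2,\,2}$ has its vertex-set partitioned into $A \cup B$ with $\abs{A} = t - 2$, $\abs{B} = 2$, where $A$ is a clique, $G[B]$ is connected (hence the two vertices of $B$ are adjacent), and every vertex of $A$ is joined to every vertex of $B$. These requirements force all $\binom{t}{2}$ edges, so the graph is exactly $K_t$. Consequently every $K_t$-minor-free graph is $\JJ_{t-2,\,2}$-minor-free, and \cref{JJstLtw} applied with $s = t - 2$ and internal $t$ equal to $2$ produces a subgraph embedding of $G$ into $H \boxtimes P \boxtimes K_m$ with $\tw(H) \leq s = t - 2$ and $m = (2 - 1)\ltw(G) = \ltw(G)$, exactly as desired.

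There is essentially no obstacle; the only thing to watch is the notational clash between the two uses of $t$. A minor subtlety is the degenerate range where \cref{JJstLtw}'s hypothesis $s \geq 2$ fails, namely $t \in \set{2, 3}$: in these cases $K_t$-minor-free graphs are edgeless or forests and admit the required product structure trivially (indeed they have layered treewidth $\leq 1$ and treewidth $\leq 1$), so the statement holds vacuously at those endpoints and the argument goes through for all $t \geq 2$.
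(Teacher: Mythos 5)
Your proposal is correct and takes exactly the paper's route: the paper derives \cref{KtMinorFreeLayered} directly from \cref{JJstLtw} via the identity $\JJ_{t-2,2} = \set{K_t}$. You are in fact slightly more careful than the paper, which does not explicitly address the boundary cases $t \in \set{2,3}$ where the hypothesis $s \geq 2$ of \cref{JJstLtw} fails; your note that these cases (edgeless graphs and forests) hold trivially closes that small gap cleanly.
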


\begin{thm}\label{KstMinorFreeLayered}
For all integers $s, t \geq 2$, every $K^{\ast}_{s, t}$-minor-free graph $G$ is isomorphic to a subgraph of $H \boxtimes P \boxtimes K_m$, where $P$ is a path, $\tw(H) \leq s$, and $m \coloneqq (t - 1) \ltw(G)$.
\end{thm}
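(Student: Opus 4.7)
The plan is to derive \cref{KstMinorFreeLayered} as an immediate corollary of \cref{JJstLtw}, just as the authors themselves indicate in the sentence preceding the statement. The key observation is a comparison between the graph class $\JJ_{s,t}$ and the single graph $K^{\ast}_{s,t}$: by definition, a graph $G' \in \JJ_{s,t}$ has a vertex partition $A \cup B$ with $|A| = s$, $|B| = t$, such that $A$ induces a clique in $G'$ and every vertex of $A$ is joined to every vertex of $B$. In particular, forgetting the additional requirement that $G'[B]$ be connected, the subgraph of $G'$ spanned by these edges is precisely $K^{\ast}_{s,t}$. Hence every graph in $\JJ_{s,t}$ contains $K^{\ast}_{s,t}$ as a (spanning) subgraph.

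From this inclusion I would argue as follows. Suppose $G$ is $K^{\ast}_{s,t}$-minor-free, and suppose for contradiction that some $G' \in \JJ_{s,t}$ is a minor of $G$. Then $K^{\ast}_{s,t}$, being a subgraph of $G'$, is also a minor of $G'$ and hence of $G$ (since the minor relation is transitive and subgraph implies minor). This contradicts the hypothesis on $G$. Thus $G$ is $\JJ_{s,t}$-minor-free.

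Having established that $K^{\ast}_{s,t}$-minor-free implies $\JJ_{s,t}$-minor-free, I simply feed $G$ into \cref{JJstLtw} with the same parameters $s$ and $t$. That theorem produces a path $P$, a graph $H$ with $\tw(H) \le s$, and an isomorphism from $G$ to a subgraph of $H \boxtimes P \boxtimes K_m$ with $m = (t-1)\,\ltw(G)$, which are exactly the conclusions sought. No further work is required, and there is no real obstacle: all of the genuine content (the combination of a layering with a tree-decomposition and the conversion into a product structure via \cref{ProductLayering}) has already been packaged inside \cref{JJstLtw}. The only thing to be careful about is the direction of the subgraph/minor containment in the opening observation, which is the one point worth spelling out explicitly.
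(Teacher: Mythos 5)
Your proposal is correct and is exactly the paper's argument: the paper deduces \cref{KstMinorFreeLayered} from \cref{JJstLtw} by noting that $K^{\ast}_{s,t}$ is a subgraph of every graph in $\JJ_{s,t}$, so $K^{\ast}_{s,t}$-minor-free implies $\JJ_{s,t}$-minor-free. Your careful spelling-out of the subgraph/minor transitivity step is the right (if minor) detail to check.
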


The Planar Graph Product Structure Theorem (\cref{PGPST}(b)) follows from \cref{KtMinorFreeLayered} (with $t = 5$) and the fact that every planar graph has layered treewidth at most 3, as proved by \citet{DMW17}. We sketch the proof for completeness.

\begin{thm}[{\citep[Thm.~12]{DMW17}}]\label{planarltw}
Every planar graph has layered treewidth at most $3$.
\end{thm}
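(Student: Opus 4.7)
The plan is to reduce to planar triangulations and then exhibit an explicit BFS layering together with a tree-decomposition whose bags are unions of three root-paths in a BFS tree. Since a layering and tree-decomposition of a supergraph restrict to any subgraph, $\ltw$ is monotone under taking subgraphs; since every planar graph extends to a planar triangulation, it suffices to prove the claim when $G$ is a planar triangulation.

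Fix such a $G$ together with a vertex $r$ on its outer face, and set $V_i \coloneqq \set{v \in V(G) \colon \dist_G(r, v) = i}$. Then $\LL \coloneqq (V_0, V_1, \dots)$ is a layering, since edges of $G$ join vertices whose distances from $r$ differ by at most one. Let $T_{\textnormal{BFS}}$ be a BFS tree of $G$ rooted at $r$, and for each vertex $v$ let $P_v$ denote the unique root-to-$v$ path in $T_{\textnormal{BFS}}$. The key observation is that $\abs{V(P_v) \cap V_i} \leq 1$ for every $i$, because $\dist_G(r, \cdot)$ decreases by exactly one at each step of $P_v$ from $v$ back to $r$.

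The structural heart of the proof is then the claim that $G$ admits a tree-decomposition $(T', \WW)$ in which every bag has the form $V(P_a) \cup V(P_b) \cup V(P_c)$ for some triple $a, b, c$ of vertices. Together with the previous paragraph this immediately yields $\ltw(G) \leq 3$, since each bag meets each layer $V_i$ in at most three vertices. I would establish the claim by induction on $\abs{V(G)}$, strengthened so as to apply to every planar near-triangulation whose boundary cycle is the union of three vertical paths meeting at $r$. In the inductive step one chooses an interior vertex to form a ``tripod'' with the three boundary paths, which splits the current region into three smaller near-triangulations each again bounded by three vertical paths; induction is then applied to each piece and the resulting decompositions are glued at the root bag $V(P_a) \cup V(P_b) \cup V(P_c)$.

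The main obstacle is the tripod-splitting step: one must verify that an interior vertex can always be selected so that the paths connecting it to the three boundary paths are themselves segments of the $P_v$'s, ensuring that each of the three resulting subregions is itself a near-triangulation bounded by three vertical paths. This is the combinatorial content of the construction of \citet{DMW17} and is the point where planarity interacts essentially with the BFS structure of $T_{\textnormal{BFS}}$.
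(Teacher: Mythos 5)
Your setup --- reduce to triangulations, take the BFS layering $(V_0, V_1, \dots)$, note that each root path meets each layer at most once, and aim for a tree-decomposition of $G$ whose bags are unions of three root-to-vertex paths in a BFS tree --- matches the paper exactly, and you correctly identify that constructing such a tree-decomposition is the crux. The gap is in how you propose to construct it. The paper does not use any inductive tripod-splitting; it invokes the classical interdigitating-spanning-tree construction. Let $T$ be the BFS tree, $G^\ast$ the planar dual of the triangulation, and $T^\ast$ the spanning subgraph of $G^\ast$ consisting of the edges dual to $E(G) \setminus E(T)$. By von Staudt's theorem $T^\ast$ is a spanning tree of $G^\ast$, and \citet{Eppstein99} observed that assigning to each node $x$ of $T^\ast$ (corresponding to a face $uvw$ of $G$) the bag $W_x \coloneqq V(P_u) \cup V(P_v) \cup V(P_w)$ yields a tree-decomposition of $G$. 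This is a direct verification; there is no selection of an interior vertex and no Sperner-type argument.

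The tripod recursion you sketch is essentially the mechanism of \citet{DJMMUW20} for the full Planar Graph Product Structure Theorem, but it proves something different: it produces a vertex-\emph{partition} of $G$ into tripods (each a union of up to three vertical paths that descend only to the boundary of the current near-triangulation, not all the way to $r$), together with a tree-decomposition of the \emph{partition graph} $H$ whose bags are formed from up to four tripods. As formulated, it does not yield a tree-decomposition of $G$ itself whose bags are exactly three full root paths, and the regions in the recursion are bounded by up to six paths alternating between vertical paths and single edges, not by three root paths meeting at $r$. The ``main obstacle'' you flag --- choosing a vertex/face whose splitting paths lie along $P_v$'s and simultaneously hit all three boundary paths --- is precisely where \citet{DJMMUW20} invoke Sperner's lemma, which is a genuinely nontrivial step. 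The Eppstein route sidesteps all of this: every face contributes a bag, and the dual tree supplies the adjacency. If you want a recursion, recurse on a face of $T^\ast$ rather than on an interior vertex; then the dual-tree argument does the work and no Sperner-type selection is needed.
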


\begin{proof}[Proof Sketch.] 
We may assume that $G$ is a planar triangulation. Let $T$ be a breadth-first-search spanning tree rooted at an arbitrary vertex $r$. Let $G^{\ast}$ be the dual of $G$ and $T^{\ast}$ be the spanning subgraph of $G^{\ast}$ consisting of those edges not dual to edges in $T$.  \Citet{vonStaudt} showed that $T^{\ast}$ is a spanning tree of $G^{\ast}$.  For each vertex $x$ of $T^{\ast}$, corresponding to face $uvw$ of $G$, let $W_x$ be the union of the $ur$-path in $T$, the $vr$-path in $T$, and the $wr$-path in $T$.  \Citet{Eppstein99} showed that $(W_x \colon x \in V(T^{\ast}))$ is a tree-decomposition of $G$.  Let $V_i \coloneqq \set{v \in V(G) \colon \dist_G(v, r) = i}$ and so $(V_0, V_1, \dotsc)$ is a layering of $G$.  Since $T$ is a breadth-first-search spanning tree, each bag $W_x$ has at most three vertices in each layer $V_i$. Hence $\ltw(G) \leq 3$.
\end{proof}

We now show that the bound in \cref{planarltw} is tight. Suppose on the contrary that $\ltw(G) \leq 2$ for every planar graph $G$. Then each layer induces a subgraph with treewidth 1, which is thus a forest. Taking alternate layers, $G$ has a vertex-partition into two induced forests (which would imply the 4-colour theorem). \Citet{CK69} constructed planar graphs $G$ that have no vertex-partition into two induced forests, implying $\ltw(G) \geq 3$.

\Cref{PGPST} is generalised as follows. The \defn{vertex-cover number $\tau(G)$} of a graph $G$ is the size of a smallest set $S\subseteq V(G)$ such that every edge of $G$ has at least one end-vertex in $S$. 
By definition, $G$ is a subgraph of every graph in $\JJ_{\tau(G), \abs{V(G)}-\tau(G)}$. A graph $X$ is \defn{apex} if $X-v$ is planar for some vertex $v\in V(X)$. \Citet{DMW17} showed that for any graph $X$, the class of $X$-minor-free graphs has bounded layered treewidth if and only if $X$ is apex. Thus, the next result follows from \cref{KstMinorFreeLayered}.

\begin{thm}\label{Apex}
For every apex graph $X$ there exists $m \in \NN$, such that every $X$-minor-free graph is isomorphic to a subgraph of $H \boxtimes P \boxtimes K_m$, where $P$ is a path and $\tw(H) \leq \tau(X)$.
\end{thm}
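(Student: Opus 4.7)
The plan is to combine \cref{JJstLtw} with the layered-treewidth bound for apex-minor-free graphs recalled just above the theorem statement, through exactly the same route that the text uses to deduce \cref{KstMinorFreeLayered} from \cref{JJstLtw}. Set $s \coloneqq \tau(X)$ and $t \coloneqq \abs{V(X)} - \tau(X)$, and let $G$ be an arbitrary $X$-minor-free graph; the goal is a product decomposition in which the treewidth of the first factor is at most $s$.

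First, I would verify that $G$ is $\JJ_{s, t}$-minor-free. Fix a vertex cover $A \subseteq V(X)$ of size $\tau(X)$ and set $B \coloneqq V(X) \setminus A$, so $\abs{A} = s$, $\abs{B} = t$, and $B$ is independent in $X$. Thus every edge of $X$ has at least one end in $A$ and therefore lies either inside $A$ or between $A$ and $B$. Now any $J \in \JJ_{s, t}$ contains an $s$-clique completely joined to a further set of $t$ vertices, which already realises $X$ as a subgraph of $J$. Consequently any $\JJ_{s, t}$-minor of $G$ would yield an $X$-minor of $G$, contradicting the hypothesis. This is exactly the observation ``$G$ is a subgraph of every graph in $\JJ_{\tau(G), \abs{V(G)} - \tau(G)}$'' applied with $G$ replaced by $X$.

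Second, I would invoke the theorem of \citet{DMW17} recalled just above the statement---that for any graph $X$ the class of $X$-minor-free graphs has bounded layered treewidth iff $X$ is apex---to obtain a constant $c = c(X)$ with $\ltw(G) \leq c$ for every $X$-minor-free $G$.

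Finally, I would apply \cref{JJstLtw} to $G$ with these parameters $s$ and $t$. This gives that $G$ is isomorphic to a subgraph of $H \boxtimes P \boxtimes K_{m}$ where $P$ is a path, $\tw(H) \leq s = \tau(X)$, and $m \coloneqq (t - 1)\ltw(G) \leq (t - 1)c$. Since this bound on $m$ depends only on $X$, the theorem follows. No serious obstacle is anticipated: the work lives in \cref{JJstMain} and \cref{JJstLtw} and in the cited layered-treewidth bound, and the only genuinely new check is the short vertex-cover observation in the first step.
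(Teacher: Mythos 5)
Your proof is correct and follows the paper's own route: express $X$ as a subgraph of $K^{\ast}_{\tau(X), |V(X)|-\tau(X)}$ (hence of every graph in $\JJ_{\tau(X), |V(X)|-\tau(X)}$) via a minimum vertex cover, invoke the \citet{DMW17} bounded-layered-treewidth characterisation of apex graphs, and apply \cref{JJstLtw}. The paper instead cites \cref{KstMinorFreeLayered}, but that is itself an immediate corollary of \cref{JJstLtw} derived by exactly the inclusion $K^{\ast}_{s,t}\subseteq J$ you used, so the arguments are the same.
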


\Citet{DJMMUW20} proved a similar result to \cref{Apex}, but with a much larger bound on $\tw(H)$ (depending on constants from the Graph Minor Structure Theorem).


\Cref{Apex} has applications to $p$-centred colouring, as we now explain. For $p \in \NN$, a vertex colouring of a graph $G$ is \defn{$p$-centred }if for every connected subgraph $X$ of $G$, $X$ receives more than $p$ colours or some vertex in $X$ receives a unique colour. The \defn{$p$-centred chromatic number $\chi_p(G)$} is the minimum number of colours in a $p$-centred colouring of $G$. Centred colourings are important within graph sparsity theory as they characterise graph classes with bounded expansion~\cite{Sparsity}. A result of \citet*[Lem.~8]{DFMS21} implies that $\chi_p(H \boxtimes P \boxtimes K_m) \leq m(p + 1) \chi_p(H)$ for every graph $H$. \Citet[Lem.~15]{PS21} proved that every graph of treewidth at most $t$ has $p$-centred chromatic number at most $\tbinom{p + t}{t} \leq (p + 1)^t$. In particular, \cref{Apex} implies:



\begin{thm}\label{pcentred}
For every apex graph $X$ with $\tau(X) \leq t$ there exists $m \in \NN$ such that for every $X$-minor-free graph $G$,
\begin{equation*}
    \chi_p(G)\leq m (p +1)^{t+1}.
\end{equation*}
\end{thm}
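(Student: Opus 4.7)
The plan is a direct combination of the three results already in hand, so I expect no substantial obstacle. First, apply \cref{Apex} to the apex graph $X$: this yields an integer $m \in \NN$ (depending on $X$) such that every $X$-minor-free graph $G$ is isomorphic to a subgraph of $H \boxtimes P \boxtimes K_m$ for some path $P$ and some graph $H$ with $\tw(H) \leq \tau(X) \leq t$. Since $\chi_p$ is monotone under taking subgraphs, it suffices to bound $\chi_p(H \boxtimes P \boxtimes K_m)$.

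Next, invoke the cited result of \citet{DFMS21} (their Lem.~8), which gives
\begin{equation*}
\chi_p(H \boxtimes P \boxtimes K_m) \leq m(p + 1)\chi_p(H).
\end{equation*}
Then apply the bound of \citet{PS21} (their Lem.~15) to $H$, which has treewidth at most $t$:
\begin{equation*}
\chi_p(H) \leq \binom{p + t}{t} \leq (p + 1)^t.
\end{equation*}
Chaining these two inequalities gives
\begin{equation*}
\chi_p(G) \leq \chi_p(H \boxtimes P \boxtimes K_m) \leq m(p + 1) \cdot (p + 1)^t = m(p + 1)^{t + 1},
\end{equation*}
as required.

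The only mild subtlety is making sure the constant $m$ in the statement is the same $m$ supplied by \cref{Apex}; this is immediate since \cref{Apex} produces a single $m$ depending only on the excluded apex graph $X$ (not on $G$ or $p$), and the factor $(p + 1)^{t + 1}$ absorbs all $p$-dependence. No further work is needed.
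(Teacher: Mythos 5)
Your proof is correct and is exactly the argument the paper uses: apply \cref{Apex} to get a subgraph of $H \boxtimes P \boxtimes K_m$ with $\tw(H) \leq \tau(X) \leq t$, then chain the bound $\chi_p(H \boxtimes P \boxtimes K_m) \leq m(p+1)\chi_p(H)$ from \citet[Lem.~8]{DFMS21} with $\chi_p(H) \leq (p+1)^t$ from \citet[Lem.~15]{PS21}. No differences of substance.
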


\citet{PS21} proved that for every graph $X$ there exists $c$ such that every $X$-minor-free graph has $p$-centred chromatic number $\OO(p^c)$. However, the known bounds on $c$ are huge (depending on the Graph Minor Structure Theorem). \Cref{pcentred} provides much improved bounds in the case of apex-minor-free graphs. As an example, since $K^{\ast}_{3,t}$ is apex with $\tau(K^{\ast}_{3,t})\leq 3$, \cref{pcentred} implies there exists $m = m(t)$ such that  $\chi_p(G)\leq m (p + 1)^4$ for every $K^{\ast}_{3,t}$-minor-free graph $G$.

\section{\large\boldmath Blow-up \texorpdfstring{$\OO(\sqrt{n})$}{O(rootn)}}

In this section we employ a similar proof strategy but with a different hitting result (\cref{FindTrees} in place of \cref{TreeHit}) to prove \cref{KtMinorFreeSqrt,KstMinorFreeSqrt}.

\begin{thm}\label{JJstSqrt}
Let $s, t, n$ be positive integers and define \begin{equation*}
m \coloneqq 
\begin{cases}
\max\set{t - 1, 1} & \textnormal{if } s = 1 \textnormal{ or } 2, \\
\sqrt{(s - 2)n} & \textnormal{if } s \geq 3 \textnormal{ and } t = 1,\\[1ex]
2\sqrt{(s - 1)(t - 1) n} & \textnormal{otherwise}.
\end{cases}
\end{equation*}
Then every $\JJ_{s,t}$-minor-free graph $G$ on $n$ vertices is isomorphic to a subgraph of $H \boxtimes K_{\floor{m}}$ for some graph $H$ of treewidth at most $s$.
\end{thm}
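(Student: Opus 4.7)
The plan is to mirror the inductive argument of \cref{JJstMainRooted}, substituting the AST-style bounds of \cref{FindTree,FindTrees} for the tree-decomposition-based hitting step and bounding part sizes directly rather than through $\WW$-width. Concretely I would prove a rooted version by lexicographic induction on $(\abs{V(G)}, s - r)$: given a $\JJ_{s,t}$-minor-free graph $G$ on at most $n$ vertices and a $K_r$-model $(U_1, \dotsc, U_r)$ in $G$ with $r \leq s$ and $\abs{U_i} \leq m$ for each $i$, construct a partition of $G$ of width at most $m$ and treewidth at most $s$ rooted at the model. The opening reductions---the case $V(G) = U$, the case where some $A_i \coloneqq N_G(U_i) \setminus U$ is empty (add $U_1$ as a new part and recurse on $G - U_1$), and the case where $G - U$ is disconnected (split at $U$ and clique-sum)---are copied verbatim from \cref{JJstMainRooted}.

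Assuming $G - U$ is connected and every $A_i$ is non-empty, the aim is to produce $Y \subseteq V(G) \setminus U$ with $\abs{Y} \leq m$ such that no component of $G - U - Y$ meets every $A_i$. If $r \leq s - 1$, I would apply \cref{FindTree} to $G - U$ with targets $A_1, \dotsc, A_r$ and $x = m$: outcome (a) produces a tree $X$ with $\abs{V(X)} \leq m$ meeting every $A_i$, so $(U_1, \dotsc, U_r, V(X))$ is a valid $K_{r + 1}$-model whose new part has size at most $m$, and we recurse (decreasing $s - r$); outcome (b) gives $Y$ with $\abs{Y} \leq (r - 1)n/m \leq (s - 2)n/m \leq m$, using the estimate $m^2 \geq 4(s - 1)(t - 1) n \geq (s - 2)n$. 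If $r = s$, I would apply \cref{FindTrees} with $\ell = t$ and $x = \sqrt{(s - 1)n/(t - 1)}$: outcome (a) produces $t$ pairwise disjoint connected subgraphs of $G - U$ each meeting every $A_i$, and, together with the connectivity of $G - U$, these yield a $\JJ_{s,t}$-minor exactly as in the $r = s$ sub-case of \cref{JJstMainRooted}, contradicting our hypothesis; outcome (b) gives $Y$ of size at most $(t - 1)x + (s - 1)n/x = 2\sqrt{(s - 1)(t - 1)n} = m$.

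Given such a $Y$, the remainder of the argument---take $Y$ minimal, decompose $G - U - Y$ into components $G_j$, choose $i'$ with $A_{i'} \cap V(G_j) = \emptyset$, construct connector subgraphs $Z_j$ linking $Y_j$ to $U_{i'}$ inside $G - U - V(G_j)$, contract $Z_j$ to a single vertex $z_j$, apply the inductive hypothesis to the resulting strictly smaller graph $G'_j$ rooted at $(\set{z_j}, U_i \colon i \neq i')$, and reattach $U_{i'}$---is copied verbatim from \cref{JJstMainRooted}, with the clique-sum structure automatically delivering $\tw(H) \leq s$. The separate base cases are handled as follows: for $s \geq 3, t = 1$ we have $\JJ_{s, 1} = \set{K_{s + 1}}$, the $r = s$ case never arises non-trivially (since $G - U$ itself would witness a $K_{s + 1}$-minor once $r = s$, all $A_i$ are non-empty, and $G - U$ is connected), and the $r \leq s - 1$ branch above suffices with $x = m = \sqrt{(s - 2)n}$; the cases $s \in \set{1, 2}$ reduce to direct structural arguments using the tight link to $K_t$-minor-freeness (for instance, $\JJ_{2, 2}$-minor-free equals $K_4$-minor-free, which implies $\tw(G) \leq 2$ and delivers the stated bound $m = 1$). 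The main obstacle I anticipate is the detailed verification of the $\JJ_{s,t}$-minor construction in the $r = s$ branch of outcome (a), where the $t$ disjoint trees must be extended into $t$ connected subgraphs partitioning $V(G) \setminus U$ so that contracting each of them and each $U_i$ produces a graph in $\JJ_{s, t}$; this is precisely the step borrowed verbatim from the proof of \cref{JJstMainRooted}.
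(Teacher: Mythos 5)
Your proposal follows the same overall strategy as the paper: a rooted version of the statement proved by induction, using \cref{FindTree} for $r \le s-1$ (to grow the model or to obtain a small cut) and \cref{FindTrees} for $r = s$ (to obtain the cut $Y$ of size $(t-1)x + (s-1)n/x$ or to build a $\JJ_{s,t}$-minor), then finishing by contraction and clique-sums exactly as in \cref{JJstMainRooted}. Your lexicographic induction on $(\abs{V(G)}, s - r)$ is an explicit and cleaner way to phrase the paper's ``maximality of $r$'' device, and your framing ``at most $n$ vertices'' (with $m$ fixed by $n$) makes the induction unambiguous when one recurses on the strictly smaller contracted graphs $G'_j$. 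The choice $x = m$ instead of the paper's $x = \sqrt{(s-2)n}$ for the $r \le s-1$ branch is a harmless variation: both yield $\abs{V(X)} \le m$ in outcome (a) and $\abs{Y} \le (s-2)n/x \le m$ in outcome (b).

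However, there is a genuine gap in your treatment of the cases $s \in \{1,2\}$, which you reduce to ``direct structural arguments using the tight link to $K_t$-minor-freeness.'' For these values of $s$ the target bound $m = \max\{t-1,1\}$ is a constant independent of $n$, while the \cref{FindTree,FindTrees} machinery only yields cuts of size $\Theta(\sqrt{n})$, so it genuinely cannot be applied here. Your example $\JJ_{2,2} = \{K_4\}$ is correct but does not generalise: for $s = 2$, $t \ge 3$ the relevant minor is $\JJ_{2,t}$, which is not a complete graph, and $\JJ_{2,t}$-minor-free graphs need not have treewidth at most $2$ (e.g.\ $K_4$ is $\JJ_{2,3}$-minor-free). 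The paper's proof handles these cases with bespoke arguments that you omit: for $s = 1$ one shows directly that $\abs{A_1} \le t-1$ (otherwise, since $G - U$ is connected, one can partition $V(G) \setminus U$ into $t$ connected pieces each hitting $A_1$ and contract to obtain a $\JJ_{1,t}$-minor), and for $s = 2$ one forces $r = 2$ (any vertex of $A_1$ extends a $K_1$-model to a $K_2$-model) and then applies Menger's theorem between $A_1$ and $A_2$ in $G - U$: either there is a separator $Y$ of size at most $t-1$, or $t$ disjoint $A_1$--$A_2$ paths yield a $\JJ_{2,t}$-minor. You need to supply these arguments; ``tight link to $K_t$-minor-freeness'' does not deliver them.
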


\Cref{JJstSqrt} implies \cref{KtMinorFreeSqrt,KstMinorFreeSqrt} since $\JJ_{t - 1, 1} = \JJ_{t - 2, 2} = \set{K_t}$ and $K^{\ast}_{s, t}$ is a subgraph of every graph in $\JJ_{s, t}$. \Cref{JJstSqrt} is implied by \cref{ProductPartition} and the following lemma.

\begin{lem}\label{JJstSqrtRooted}
Let $s, t, n$ be positive integers and define $m$ as in \cref{JJstSqrt}. Suppose $G$ is a $\JJ_{s, t}$-minor-free graph on $n$ vertices and $(U_1, \dotsc, U_r)$ is a $K_r$-model in $G$ where $r \leq s$ and $\abs{U_i} \leq m$ for all $i$. Then $G$ has a partition of width at most $m$ and treewidth at most $s$ that is rooted at $(U_1, \dotsc, U_r)$.
\end{lem}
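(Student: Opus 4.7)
My plan is to follow the overall structure of the proof of \cref{JJstMainRooted}, but with \cref{TreeHit} replaced by \cref{FindTrees} so the hitting set we extract is bounded in absolute size by $m$ rather than in $\WW$-width, and with the tree-decomposition (no longer in the hypothesis) dropped throughout. The induction is nested: outer on $\abs{V(G)}$ and inner on $s - r$, so we can enlarge the rooted $K_r$-model whenever $r < s$ without decreasing the vertex count.

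The preliminary cases go through exactly as in \cref{JJstMainRooted}. Write $U \coloneqq U_1 \cup \dotsb \cup U_r$ and $A_i \coloneqq N_G(U_i) \setminus U$. If $V(G) = U$, the partition $(U_1, \dotsc, U_r)$ itself works, since each $\abs{U_i} \le m$ and $K_r$ has treewidth $r - 1 \le s$. If some $A_i$ is empty, the outer induction applied to $G - U_i$ (rooted at the smaller model) yields a partition to which $U_i$ can be reattached as a new part whose neighbourhood is the clique $\{U_j : j \ne i\}$ of size $r - 1$, preserving treewidth at most $s$. If $G - U$ is disconnected, split off a non-trivial piece, recurse on the two strictly smaller subgraphs, and clique-sum.

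In the main case ($G - U$ connected, every $A_i$ non-empty) we apply \cref{FindTrees} to $G - U$ with the sets $A_1, \dotsc, A_r$, taking $\ell \coloneqq t$ and $x \coloneqq \sqrt{(s - 1)n/(t - 1)}$ in the principal range $s, t \ge 2$. (The degenerate ranges $s \in \{1, 2\}$ and $t = 1$ need analogous parameter choices; in particular, because $\JJ_{s, 1} = \{K_{s + 1}\}$, any single tree meeting every $A_i$ gives a $K_{s + 1}$-minor when $r = s$, so there the natural choice is $\ell = 1$ and $x = (s - 1)n/m$.) If outcome~(a) holds, we obtain $t$ pairwise disjoint trees $X_1, \dotsc, X_t$ in $G - U$, each of size at most $x$ and each meeting every $A_i$. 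When $r = s$, contracting these trees together with the $U_i$ yields a $\JJ_{s, t}$-minor of $G$, exactly as in \cref{JJstMainRooted}, contradicting the hypothesis. When $r < s$, the set $V(X_1)$ satisfies $\abs{V(X_1)} \le x \le m/(2(t - 1)) \le m$, and $(U_1, \dotsc, U_r, V(X_1))$ is a $K_{r + 1}$-model with parts of size at most $m$, so the inner inductive hypothesis (with $s - r$ decreased by one) finishes.

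If outcome~(b) holds instead, the set $Y$ satisfies $\abs{Y} \le (t - 1)x + (r - 1)n/x \le \sqrt{(s - 1)(t - 1)n} + \sqrt{(s - 1)(t - 1)n} = m$, and no component of $G - U - Y$ meets every $A_i$. From here the proof is a direct translation of the final third of \cref{JJstMainRooted}: take $Y$ minimal with this property (so the paths $P_w$ from each $w \in Y_j$ to $A_{i'}$ avoiding $U \cup V(G_j)$ exist), for each component $G_j$ of $G - U - Y$ build the auxiliary connected subgraph $Z_j \supseteq U_{i'}$ (for a suitable $i'$ with $A_{i'} \cap V(G_j) = \emptyset$), contract $Z_j$ to a vertex $z_j$ to obtain a strictly smaller graph $G'_j$ on which the outer inductive hypothesis applies (rooted at $(\{z_j\}, U_i : i \ne i')$), reattach $U_{i'}$, and clique-sum the resulting partitions after identifying the $z_j$'s with the single part $Y$. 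Every instance of ``$\WW$-width at most $t - 1$'' in the original becomes the sharper ``size at most $m$'', which holds because $\abs{Y} \le m$ by construction. The main obstacle is calibrating $x$ so that both outcomes of \cref{FindTrees} fit within the budget $m$; the uniform choice $x = \sqrt{(s - 1)n/(t - 1)}$ achieves this for the entire range $r \le s$ in the principal case, and checking the corresponding parameter choices in the small subcases is routine bookkeeping.
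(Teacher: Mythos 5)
Your proposal correctly identifies the overall architecture: replace $\cref{TreeHit}$ by $\cref{FindTrees}$ so that the hitting set is bounded in absolute size, reuse the $A_i$-empty / $G-U$-disconnected reductions, and finish by contracting the augmented connectors $Z_j$ exactly as in the $\WW$-width proof. The nested induction on $\abs{V(G)}$ and $s-r$ is essentially the paper's ``maximality of $r$'' device, and your uniform choice $\ell = t$, $x = \sqrt{(s-1)n/(t-1)}$ does give $(t-1)x + (r-1)n/x \le 2\sqrt{(s-1)(t-1)n} = m$ in outcome (b) and $x\le m$ in outcome (a), so the argument is correct for $s\ge 3$, $t\ge 2$.

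However, the claim that the cases $s\in\{1,2\}$ (and $t=1$) are handled by ``analogous parameter choices'' and ``routine bookkeeping'' is wrong, and this is a genuine gap rather than a cosmetic one. When $s\le 2$ the blow-up $m=\max\{t-1,1\}$ is a constant independent of $n$, but $\min_{x\ge 1}\bigl((t-1)x + (r-1)n/x\bigr) = \Theta(\sqrt{n})$ whenever $r\ge 2$ and $t\ge 2$, so no choice of $x$ in $\cref{FindTrees}$ can produce a hitting set of size $\le t-1$. The paper instead uses two entirely different arguments here: for $s=1$ it bounds $\abs{A_1}\le t-1$ directly (any $t$ vertices of $A_1$ together with $U_1$ would give a $\JJ_{1,t}$-minor after partitioning $G-U$ into $t$ connected pieces) and takes $Y=A_1$; for $s=2$ it first forces $r=2$ via maximality and then invokes Menger's theorem to find a separator of size $\le t-1$ between $A_1$ and $A_2$ inside $G-U$. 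Your proposed parameters for $t=1$, namely $\ell=1$ and $x=(s-1)n/m$, also fail when $r<s$: they give $x = (s-1)\sqrt{n/(s-2)} > \sqrt{(s-2)n}=m$, so the tree from outcome (a) can be larger than $m$ and cannot serve as the new part $U_{r+1}$ of the enlarged model; the paper uses $x = \sqrt{(s-2)n}=m$ for $r\le s-1$ here, for which both $x\le m$ and $(r-1)n/x\le m$ hold. Finally, a smaller point: the base case should be $n \le r + m$ (taking the leftover $V(G)\setminus U$ as a single part) rather than $V(G)=U$, both to cover the range $n\le t-1$ in which $x\ge 1$ can fail and to mirror the fact that $m$ already absorbs one whole extra part.
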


\begin{proof}
Let $U \coloneqq U_1 \cup \dotsb \cup U_r$. 
We proceed by induction on $n$. If $n \leq r + m$, then the partition $(U_1, \dotsc, U_r, V(G) \setminus U)$ is the desired partition $H$ where $H = K_{r + 1}$ has treewidth $r \leq s$. Now assume that $n > r + m$. Note that if $n \leq t - 1$, then $n \leq m$ in all cases and so we may assume that $n > t - 1$. Let $A_i \coloneqq N_{G}(U_i) \setminus U$ for each $i$.

By the same argument used in the proof of \cref{JJstMainRooted}, we may assume that $A_i$ is non-empty for all $i$, and that $G - U$ is connected.

If $r \leq s - 1$ and there is some $U_{r + 1}$  of size at most $m$ such that $(U_1, \dotsc, U_{r + 1})$ is a $K_{r + 1}$-model in $G$, then \cref{JJstSqrtRooted} for $U_1, \dotsc, U_{r + 1}$ would imply it is also true for $U_1, \dotsc, U_r$ (with the same partition). In particular, if $r \leq s - 1$, then we may assume there is no $U_{r + 1}$ of size at most $m$ such that $(U_1, \dotsc, U_{r + 1})$ is a $K_{r + 1}$-model in $G$. Call this property the `maximality of $r$'.

We now show there exists a set $Y \subseteq V(G) \setminus U$ of size at most $m$ such that
\begin{equation}\label{eq:Y}
    \textnormal{no component of } G - U - Y \textnormal{ meets every } A_i. \tag{$\ddagger$}
\end{equation}

First suppose that $s = 1$ and so $U = U_1$. Suppose that $\abs{A_1} \geq t$. Let $v_1, \dotsc, v_t$ be distinct vertices in $A_1$. Since $G - U$ is connected, it is possible to partition $V(G) \setminus U$ into vertex-sets $Q_1, \dotsc, Q_t$ such that for all $i$, $v_i \in Q_i$ and $G[Q_i]$ is connected. Now contract each $Q_i$ into a single vertex $q_i$ and $U_1$ into a single vertex $u_1$ to get a graph $G'$ on vertex-set $\set{u_1, q_1, \dotsc, q_t}$. Since $G - U$ is connected, $G'[\set{q_1, \dotsc, q_t}]$ is connected and so $G' \in \JJ_{1, t}$, a contradiction. Hence $\abs{A_1} \leq t - 1 \leq m$. Then $Y = A_1$ satisfies \eqref{eq:Y}.

Next suppose that $s = 2$. If $r = 1$, then for any $x \in A_1$, the pair $(U_1, \set{x})$ is a $K_{2}$-model in $G$, which contradicts the maximality of $r$. Hence $r = 2$ and $U = U_1 \cup U_2$. Suppose $G - U$ contains $t$ pairwise vertex-disjoint paths $P_1, \dotsc, P_t$ from $A_1$ to $A_2$. Since $G - U$ is connected, there is a partition of $V(G) \setminus U$ into vertex-sets $Q_1, \dotsc, Q_t$ such that, for all $i$, $V(P_i) \subseteq Q_i$ and $G[Q_i]$ is connected. Now contract each $Q_i$ to a single vertex $q_i$ and each $U_i$ to a single vertex $u_i$ to get a graph $G'$ on vertex-set $\set{u_1, u_2, q_1, \dotsc, q_t}$. Since $G - U$ is connected, $G'[\set{q_1, \dotsc, q_t}]$ is connected and so $G' \in \JJ_{2, t}$, a contradiction. Thus, by Menger's theorem, there is a set $Y \subseteq V(G) \setminus U$ of size at most $t - 1 \leq m$ such that there is no path from $A_1$ to $A_2$ in $G - U - Y$. In particular, no component of $G - U - Y$ meets both $A_1$ and $A_2$ and so $Y$ satisfies \eqref{eq:Y}. Thus we may assume that $s \geq 3$.

Suppose that $r \leq s - 1$. Apply \cref{FindTree} to $G - U$ with $x = \sqrt{(s - 2) n} \geq 1$ and $k = r$. If (a) occurs, then there is a tree $T$ on at most $x \leq m$ vertices intersecting each $A_i$. Then $(U_1, \dotsc, U_r, T)$ is a $K_{r + 1}$-model in $G$ with all parts of size at most $m$, which contradicts the maximality of $r$. Hence, (b) occurs. That is, there is a vertex-set $Y$ of size at most $(r - 1)n/x \leq (s - 2)n/x = x \leq m$ such that no component of $G - U - Y$ meets every $A_i$. This $Y$ satisfies \eqref{eq:Y}.

Now assume that $r = s$. For $t = 1$ we are done: since $G - U$ is connected, contracting each of $U_1, \dotsc, U_s, G - U$ to a single vertex gives a $K_{s + 1}$-minor in $G$, which is a contradiction since $K_{s + 1} \in \JJ_{s, 1}$. Thus $t \geq 2$. Apply \cref{FindTrees} to $G - U$ with $\ell = t$, $k = r = s$ and $x = \sqrt{\frac{s - 1}{t - 1} n} > 1$. Suppose (a) occurs. Then there are pairwise disjoint trees $T_1, \dotsc, T_t$ in $G - U$ such that each $T_j$ meets each $A_i$. Since $G - U$ is connected, it is possible to partition $V(G) \setminus U$ into vertex-sets $Q_1, \dotsc, Q_t$ such that, for all $i$, $V(T_i) \subseteq Q_i$ and $G[Q_i]$ is connected. Now contract each $Q_i$ to a single vertex $q_i$ and each $U_i$ to a single vertex $u_i$ to get a graph $G'$ on vertex-set $\set{u_1, \dotsc, u_s, q_1, \dotsc, q_t}$. Since $G - U$ is connected, $G'[\set{q_1, \dotsc, q_t}]$ is connected and so $G' \in \JJ_{s, t}$, a contradiction. Hence, (b) occurs: there is a vertex-set $Y$ of size at most $(t - 1)x + (s - 1)n/x = m$ such that no component of $G - U - Y$ meets every $A_i$. This $Y$ satisfies \eqref{eq:Y}.

We have shown in all cases that there exists $Y \subseteq V(G) \setminus U$ satisfying \eqref{eq:Y}.  We may now finish exactly as in the proof of \cref{JJstMainRooted} (with width instead of $\WW$-width, so the argument is even simpler). 
\end{proof}

Since $K^{\ast}_{2, t}$ is planar and so $K^{\ast}_{2, t}$-minor-free graphs have bounded treewidth, one would expect a good bound (independent of $n$) on the blow-up factor.  \Citet{UTW} showed that every $K^{\ast}_{2, t}$-minor-free graph is isomorphic to a subgraph of $H \boxtimes K_{\OO(t^3)}$ where $\tw(H) \leq 2$. They state as an open problem whether this $\OO(t^3)$ bound can be improved to $\OO(t)$.  \Cref{JJstSqrt} for $s = 2$ gives an affirmative answer to this question.

\begin{thm}\label{K2tMinorFree}
For every integer $t \geq 2$, every $K^{\ast}_{2, t}$-minor-free graph $G$ is isomorphic to a subgraph of $H \boxtimes K_{t - 1}$, where $\tw(H) \leq 2$.
\end{thm}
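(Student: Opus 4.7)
The plan is to derive \cref{K2tMinorFree} as a direct instance of \cref{JJstSqrt} with $s = 2$. First I would observe that $K^{\ast}_{2,t}$ is a subgraph of every graph in the class $\JJ_{2,t}$: by definition, each $G \in \JJ_{2,t}$ has a vertex-partition $A \cup B$ with $|A| = 2$, $|B| = t$, $A$ a clique, and every vertex of $A$ adjacent to every vertex of $B$, which already contains $K^{\ast}_{2,t}$ on $A \cup B$ regardless of the additional edges inside $B$. Consequently, any graph that contains some $J \in \JJ_{2,t}$ as a minor also contains $K^{\ast}_{2,t}$ as a minor, so every $K^{\ast}_{2,t}$-minor-free graph is $\JJ_{2,t}$-minor-free.

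Next I would apply \cref{JJstSqrt} with parameters $s = 2$ and the given $t \geq 2$. Since $t \geq 2$ we have $\max\{t-1, 1\} = t - 1$, so the definition of $m$ in the first case of \cref{JJstSqrt} gives $m = t - 1$, which is already an integer, hence $\lfloor m \rfloor = t - 1$. The conclusion is that $G$ is isomorphic to a subgraph of $H \boxtimes K_{t-1}$ for some graph $H$ with $\tw(H) \leq 2$, which is exactly \cref{K2tMinorFree}.

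There is no genuine obstacle here: all of the real work has been done in the proof of \cref{JJstSqrtRooted}, where the $s = 2$ branch already uses Menger's theorem to find a hitting set of size at most $t - 1$ between $A_1$ and $A_2$, independent of $n$. The only mildly delicate point worth double-checking is the edge case $t = 2$ (where $m = 1$ and the blow-up factor is $K_1$), but \cref{JJstSqrtRooted} treats this uniformly since the Menger bound $t - 1$ and the formula $\max\{t-1, 1\}$ agree for all $t \geq 2$. Hence no separate argument is needed and the theorem follows as a one-line corollary.
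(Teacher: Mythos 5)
Your proposal is correct and matches the paper's own derivation exactly: the paper also obtains \cref{K2tMinorFree} by instantiating \cref{JJstSqrt} at $s = 2$ (using that $K^{\ast}_{2,t}$-minor-free graphs are $\JJ_{2,t}$-minor-free, so the first branch of the definition of $m$ yields $m = t-1$). Your remark about the $t = 2$ edge case and Menger giving a cutset of size $t-1$ inside \cref{JJstSqrtRooted} is an accurate reading of where the independence of $n$ really comes from.
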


Note that \cref{K2tMinorFree} implies $K^{\ast}_{2, t}$-minor-free graphs have treewidth $\OO(t)$, which was first proved by \citet[(4.4)]{LeafSeymour15}.

\section{\large Concluding Remarks}

We conclude the paper by first discussing some possible ways in which \cref{KtMinorFreeSqrt} might be strengthened. Similar questions can be asked for $K_{s,t}$-minor-free graphs. Consider the following meta-theorem: 

\smallskip

\quad 
\begin{minipage}{\textwidth-25mm}
Every $K_t$-minor-free graph $G$ is isomorphic to a subgraph of $H \boxtimes K_{m(G)}$\\ 
for some function $m$ and some graph $H$ of treewidth at most $f(t)$.
\end{minipage} 
\quad $(\star)$

\smallskip

Note that \cref{KtMinorFreeSqrt} says that $(\star)$ holds for $m(G) = 2 \sqrt{(t - 3)n}$ where $n \coloneqq \abs{V(G)}$ and $f(t) = t - 2$ while \cref{KtMinorFreetw} says it holds for $m(G) = \tw(G) + 1$ and $f(t) = t - 2$.

\textbf{Q1.} Is it possible to improve $f(t)$ in \cref{KtMinorFreeSqrt} (possibly sacrificing some dependence on $t$ in $m(G)$)? 
In particular, can $(\star)$ be proved with $m(G) = \OO_t(n^{1/2})$ and $f(t) = c$ for some constant $c$ independent of $t$?  It follows from a result of \citet{LMST08} that, even for planar graphs, $c \geq 2$. On the other hand, $(\star)$ holds with $H$ a star ($c = 1$) and $m(G) = \OO_t(n^{2/3})$, and for any $\epsilon > 0$ there exists $c$ such that $(\star)$ holds with $f(t) \leq c$ and $m(G) = \OO_t(n^{1/2 + \epsilon})$; see \citep{Wood22}. The real interest is when $m(G) = \OO_t(n^{1/2})$.

As noted in \cref{Intro}, there is no corresponding improvement to \cref{KtMinorFreetw} since $f(t) = t - 2$ is best possible when $m(G)$ is a function of $\tw(G)$.

\textbf{Q2.} We highlight the $t = 5$ case of Q1: is every $K_5$-minor-free graph $G$ isomorphic to a subgraph of $H \boxtimes K_{\OO(\sqrt{n})}$ for some graph $H$ of treewidth at most 2? Having treewidth at most 2 is equivalent to being $K_4$-minor-free, so this problem is particularly appealing. It is open even when $G$ is planar.

\textbf{Q3.} Optimising the dependence on $t$ in \cref{KtMinorFreeSqrt} is an interesting question. Note that \citet{KawaReed10} proved that $K_t$-minor-free graphs have balanced separators of order $\OO(t \sqrt{n})$, which is best possible. Does $(\star)$ hold with $f(t) \cdot m(G) = \OO(t \sqrt{n})$?

Finally we mention a connection to row treewidth. \Citet{BDJMW22} defined the \defn{row treewidth} of a graph $G$ to be the minimum treewidth of a graph $H$ such that $G$ is isomorphic to a subgraph of $H \boxtimes P$ for some path $P$. For example, \cref{PGPST}(a) says that planar graphs have row treewidth at most 8, which was improved to 6 by \citet*{UWY22}. It is easily seen that $\ltw(G) \leq \rtw(G) + 1$ for every graph $G$. The next result, which  provides a partial converse,  follows from \cref{KtMinorFreeLayered} since  $\tw(H \boxtimes K_m) \leq (\tw(H) + 1)m - 1$.

\begin{cor}\label{KtMinorFreeRow}
For every $K_t$-minor-free graph $G$,
\begin{equation*}
    \rtw(G) \leq (t - 1) \ltw(G) - 1.
\end{equation*}
\end{cor}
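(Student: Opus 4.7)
The plan is to treat this corollary as a direct formal consequence of \cref{KtMinorFreeLayered} together with the elementary treewidth bound for blow-ups recalled just before the statement. Given a $K_t$-minor-free graph $G$, \cref{KtMinorFreeLayered} furnishes a graph $H$ with $\tw(H) \leq t-2$, a path $P$, and an embedding of $G$ as a subgraph of $H \boxtimes P \boxtimes K_m$ where $m \coloneqq \ltw(G)$. Using associativity and commutativity of the strong product, I would rewrite this as $G \subseteq (H \boxtimes K_m) \boxtimes P$, so that $H' \coloneqq H \boxtimes K_m$ witnesses a row-decomposition of $G$ of width $\tw(H')$. By definition of $\rtw$, this immediately gives $\rtw(G) \leq \tw(H \boxtimes K_m)$.

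It then remains to apply the standard inequality $\tw(H \boxtimes K_m) \leq (\tw(H)+1)m - 1$, which comes from inflating each bag $W_x$ of an optimal tree-decomposition of $H$ to $W_x \times V(K_m)$ of size at most $(\tw(H)+1)m$. Substituting the bounds $\tw(H) \leq t-2$ and $m = \ltw(G)$ yields
\[
\rtw(G) \leq (\tw(H)+1)m - 1 \leq (t-1)\ltw(G) - 1,
\]
as required. There is no genuine obstacle here: both ingredients are already in hand, so the proof is purely a matter of chaining them in the right order.
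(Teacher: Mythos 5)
Your proof is correct and is precisely the argument the paper intends: apply \cref{KtMinorFreeLayered} to get $G \subseteq H \boxtimes P \boxtimes K_{\ltw(G)}$ with $\tw(H) \le t-2$, regroup the product as $(H \boxtimes K_m) \boxtimes P$, and bound $\tw(H \boxtimes K_m)$ by $(\tw(H)+1)m - 1$. The paper compresses all of this into a single sentence, but the underlying chain of deductions is identical.
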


\Cref{KtMinorFreeRow} is in marked contrast to a result of \citet{BDJMW22} who constructed graphs with layered treewidth 1 and arbitrarily large row-treewidth. Thus the $K_t$-minor-free (or some other sparsity) assumption in \cref{KtMinorFreeRow} is necessary.

\textbf{Q4.} For what other graph classes $\GG$ (beyond those defined by an excluded minor) is row treewidth bounded by a function of layered treewidth for graphs in $\GG$?

\subsubsection*{Acknowledgement}

Thanks to Gwena\"{e}l Joret for pointing out that \cref{TreeHit} leads to improved dependence on $\ell$ in some of our results. Thanks to Robert Hickingbotham who observed that our results imply \cref{pcentred}. Thanks to Pat Morin for stimulating conversations. 

{
\fontsize{11pt}{12pt}
\selectfont
\bibliographystyle{DavidNatbibStyle}
\bibliography{DavidBibliography}
}
\appendix
\section{Simple Treewidth}
\label{SimpleTreewidth}


A tree-decomposition $(T,(W_x \colon x \in V(T)))$ of a graph $G$ is \defn{$k$-simple}, for some $k\in\NN$,  if it has  width  at most $k$, and for every set $S$ of $k$ vertices in $G$, we have $\abs{\set{x\in V(T) \colon S\subseteq W_x}}\leq 2$. The \defn{simple treewidth}, $\stw(G)$, of a graph $G$ is the minimum $k\in\NN$ such that $G$ has a $k$-simple tree-decomposition. Simple treewidth appears in several places in the literature under various guises \citep{BDJM,KU12,KV12,MJP06,Wulf16,HMSTW}. The following facts are well known: A graph has simple treewidth 1 if and only if every component is a path. A graph has simple treewidth at most 2 if and only if it is outerplanar. A graph has simple treewidth at most 3 if and only if it has treewidth at most 3 and is planar~\citep{KV12}. The edge-maximal graphs with simple treewidth 3 are ubiquitous objects, called  \defn{planar 3-trees} in structural graph theory and graph drawing~\citep{AP-SJADM96,KV12}, called \defn{stacked polytopes} in polytope theory~\citep{Chen16}, and called \defn{Apollonian networks} in enumerative and random graph theory~\citep{FT14}. It is well-known and easily proved that $\tw(G) \leq \stw(G)\leq \tw(G) + 1$ for every graph $G$ (see \citep{KU12,Wulf16}). 

Several known product structure theorems can be expressed in terms of simple treewidth. For example, the following simple treewidth version of \cref{PGPST} is known.
\begin{thm}\label{PGPSTstw}
Every planar graph is isomorphic to a subgraph of\textnormal{:}
\begin{enumerate}[\textnormal{(\alph{*})}]
    \item $H \boxtimes P$ for some planar graph $H$ of simple treewidth  $8$ and for some path $P$ \textup{(\citep{DJMMUW20})}.
    \item $H \boxtimes P$ for some planar graph $H$ of simple treewidth  $6$ and for some path $P$ \textup{(\citep{UWY22})}.
    \item $H \boxtimes P \boxtimes K_3$ for some planar graph $H$ of simple treewidth $3$ and for some path $P$ \textup{(\citep{DJMMUW20})}.
\end{enumerate}
\end{thm}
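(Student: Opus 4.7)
The plan is to extract the additional properties (planarity of $H$ and the claimed simple treewidth) from the existing constructions behind the cited product structure theorems, rather than re-proving them from scratch; this is in the spirit of an appendix exposition on simple treewidth.

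For part (c), I would apply \cref{KtMinorFreeLayered} with $t = 5$ (valid since every planar graph is $K_5$-minor-free) together with \cref{planarltw}, obtaining $G \subseteq H \boxtimes P \boxtimes K_3$ for some graph $H$ with $\tw(H) \leq 3$. The additional point to check is that the $H$ built by \cref{JJstMainRooted}, when started from the Eppstein-style planar tree-decomposition used in the proof of \cref{planarltw}, is itself planar. This should follow because each inductive step of \cref{JJstMainRooted} either performs a clique-sum (which preserves planarity by Wagner's theorem) or contracts a connected piece lying inside a bag of bounded size, which can be arranged to respect a fixed planar embedding. Once $H$ is planar with $\tw(H) \leq 3$, the equivalence recalled in the appendix ($\stw(H) \leq 3$ iff $H$ is planar with $\tw(H) \leq 3$) immediately upgrades $\tw(H) \leq 3$ to $\stw(H) \leq 3$.

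For parts (a) and (b), I would follow the original constructions of \citet{DJMMUW20} and \citet{UWY22} respectively. In both, $H$ is constructed explicitly from a planar triangulation $G$ via a BFS-layering and a partition into vertical geodesic paths, and comes with a canonical tree-decomposition whose bags are indexed by faces of an auxiliary triangulation and have constant size (width $7$ in (a), width $5$ in (b)). Verifying $k$-simplicity for $k = 8$ in (a) and $k = 6$ in (b) then reduces to a local inspection: any $k$-vertex subset inside a bag is necessarily the whole bag (since $k$ exceeds the width), and two distinct bags containing the same full vertex set correspond to two faces sharing a dual edge, of which there are at most two. Planarity of $H$ is immediate from the constructions themselves.

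The main obstacle is the planarity argument in (c). \Cref{JJstMainRooted} is stated for $\JJ_{s, t}$-minor-free graphs in the full generality of an abstract tree-decomposition, so extracting a planar $H$ from it requires one to pick the tree-decomposition of $G$ fed to the lemma so that its bags and separators respect a fixed planar embedding of $G$, and to track through the recursion that each clique-sum can be realised as a gluing of planar pieces along a common face. Parts (a) and (b) are by contrast essentially bookkeeping on top of explicit, already-planar constructions from the literature.
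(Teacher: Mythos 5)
The theorem you are asked to prove is not proved in this paper: \cref{PGPSTstw} is stated as a known result, with each part attributed by citation to \citet{DJMMUW20} or \citet{UWY22}. So there is no internal proof to compare against; what matters is whether your proposed derivation is sound. For parts (a) and (b), deferring to the explicit constructions in the cited papers and checking $k$-simplicity locally is reasonable and in the intended spirit.

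Part (c) is where your proposal has a genuine gap. You propose to run \cref{JJstMainRooted} (via \cref{KtMinorFreeLayered} with $t = 5$) and then argue that the resulting partition graph $H$ is planar. Two things go wrong. First, the parts of the partition produced by \cref{JJstMainRooted} are not connected subsets of $G$ (for instance, the hitting set $Y$ is a union of bags and is typically disconnected), so $H$ is a quotient of $G$ by a possibly-disconnected partition, not a minor of $G$; planarity of $G$ therefore gives you nothing for free. Second, your fallback argument --- that each recursive step is a clique-sum of planar pieces and ``clique-sums preserve planarity by Wagner's theorem'' --- is false as stated: clique-sums along triangles preserve $K_5$-minor-freeness, not planarity (e.g.\ a $3$-sum of two planar graphs whose gluing triangle is not a face of either can fail to be planar). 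The paper's own appendix is careful about exactly this point: the upgrade from $\tw(H)\le s$ to $\stw(H)\le s$ in \cref{STWMain,STWMainRooted} is obtained only for $K^{\ast}_{s,t}$-minor-free graphs, and the authors explicitly note that the one place they cannot do this is the $K_t$-minor-free case, where they rely on $\JJ_{t-2,2}=\set{K_t}$. Running your approach with the $K^{\ast}_{3,3}$-minor-free route (via \cref{KstLtw} with $s=t=3$) does give a planar $H$ with $\stw(H)\le 3$, but the blow-up factor is $(t-1)\ltw(G)=6$, not $3$. So the machinery in this paper gives $H\boxtimes P\boxtimes K_6$, not $H\boxtimes P\boxtimes K_3$, with a planar $H$; the sharper $K_3$ factor in part (c) really does require the more bespoke planar argument of \citet{DJMMUW20}, and cannot be recovered from \cref{JJstMainRooted} by the route you sketch.
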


Similarly, this appendix shows that $\tw(H)$ can be replaced by $\stw(H)$ in \cref{KtMinorFreeSqrt}(a), \cref{KstMinorFreeSqrt}, \cref{KstMinorFreetw},
\cref{Surface}, \cref{KstMinorFreeLayered}, \cref{Apex}, and \cref{K2tMinorFree}. In particular, in \cref{K2tMinorFree}, $H$ is outerplanar and, in \cref{Surface}, $H$ is planar with treewidth at most 3.
These results all follow by improving the `treewidth at most $s$' conclusions of \cref{JJstMain,JJstSqrt} to `simple treewidth at most $s$'. This improvement comes at a slight cost: the theorems now apply to $K^{\ast}_{s,t}$-minor-free graphs instead of $\JJ_{s,t}$-minor-free graphs. The only place where we do not obtain a result in terms of simple treewidth is for $K_t$-minor-free graphs where we  use $\JJ_{t - 2, 2} = \set{K_t}$.

\begin{thm}\label{STWMain}
    Let $s, t \geq 2$ be integers, $G$ be a $K^{\ast}_{s,t}$-minor-free graph, and $(T, \WW)$ be a tree-decomposition of $G$. Then $G$ has a partition of $\WW$-width at most $t - 1$ and simple treewidth at most $s$.
\end{thm}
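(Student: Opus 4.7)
The plan is to prove a rooted analogue of \cref{JJstMainRooted} for $K^{\ast}_{s,t}$-minor-free graphs whose conclusion is ``simple treewidth at most $s$'' in place of ``treewidth at most $s$'', from which \cref{STWMain} follows by taking $r = 1$ and $U_1$ a single vertex. The proof is by induction on $\abs{V(G)}$ with the same case analysis as in \cref{JJstMainRooted}, using the key algebraic fact that simple treewidth at most $s$ is preserved by clique-sums along cliques of size at most $s-1$, and (with care about the number of ``apices'' on the summing clique) along cliques of size $s$.

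With this in hand, the three simpler cases---the empty-$A_i$ case (attaching $U_1$ to the $(r-1)$-clique $\set{U_2, \dotsc, U_r}$), the disconnected-$G-U$ case (clique-summing along $\set{U_1, \dotsc, U_r}$), and the disjoint-subtrees case when $r \leq s-1$ (clique-summing along an $(r+1)$-clique)---transfer with only routine verifications. These verifications can be made uniform by strengthening the inductive statement with an appropriate ``single apex at the root'' condition. The main obstacle is the case $r = s$: in the original proof, the partitions $H_j$ are combined via a clique-sum along the $(s+1)$-clique $\set{U_1, \dotsc, U_s, Y}$, which would force the $s$-subsets of this clique to lie in up to $q+1$ bags of the combined decomposition (one per component of $G - U - Y$ plus the shared central bag), violating the simple condition.

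To overcome this obstacle, I would exploit the fact that $K^{\ast}_{s,t}$-minor-freeness (unlike $\JJ_{s,t}$-minor-freeness) does not require the $t$-part of the forbidden minor to be connected. There is therefore no need to arrange that $G - U$ is connected, and the $Z_j$-and-$P_w$ construction of the original proof can be dropped. Instead, the plan is to recurse on each component of $G - U - Y$ directly with the same rooting $(U_1, \dotsc, U_s)$, combine the resulting component partitions via clique-sums along the $s$-clique $\set{U_1, \dotsc, U_s}$ (with apex control), and then attach $Y$ as a new part---or, if needed, split $Y$ into at most $t-1$ parts indexed by the bags of $\WW$ that cover it---whose neighbourhood in the partition graph lies inside $\set{U_1, \dotsc, U_s}$. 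The hard part will be arranging this final attachment so that all three constraints hold simultaneously: $\WW$-width at most $t-1$, partition-graph neighbourhood contained in an $s$-clique, and preservation of the root apex condition needed for simple treewidth at most $s$.
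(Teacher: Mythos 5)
Your overall framework---a rooted analogue of \cref{JJstMainRooted} proved by induction, with an extra ``apex control'' hypothesis added to the inductive statement---matches the paper's \cref{STWMainRooted}, which strengthens the induction with the requirement that when $r = s$ only one bag of the constructed decomposition contains all of $U_1, \dotsc, U_s$. You also correctly observe that $K^{\ast}_{s,t}$-minor-freeness (unlike $\JJ_{s,t}$) lets one weaken the ``$G - U$ connected'' hypothesis. However, both the way you handle the disconnected case and, more seriously, your replacement for the $r = s$ case have gaps. For the disconnected case, clique-summing two pieces along $\set{U_1, \dotsc, U_r}$ with $r = s$ does not preserve the apex invariant: each piece contributes a bag containing $\set{U_1, \dotsc, U_s}$, so the merged decomposition has two such bags where the strengthened inductive statement allows only one. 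The paper sidesteps this by instead splitting off a component $Q_1$ of $G-U$ missing some $A_1$, recursing on $G[(U \setminus U_1) \cup V(Q_1)]$ rooted at $(U_2, \dotsc, U_r)$ and on $G - V(Q_1)$ rooted at $(U_1, \dotsc, U_r)$, and clique-summing along the $(r-1)$-clique $\set{U_2, \dotsc, U_r}$; afterwards one may assume every component of $G-U$ meets every $A_i$, which is weaker than connectivity but is all the rest of the argument uses.

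The central gap is in the $r = s$ case. You propose to recurse on each component of $G - U - Y$ with the unchanged rooting $(U_1, \dotsc, U_s)$, combine the results, and then attach $Y$ as a new part whose ``neighbourhood in the partition graph lies inside $\set{U_1, \dotsc, U_s}$.'' That constraint is not a hard engineering problem; it is false. Vertices of $Y$ have edges into $V(G)\setminus(U \cup Y)$, so in any valid partition the part $Y$ must be adjacent to every part of each $H_j$ that contains a neighbour of $Y$, and with no representative of $Y$ in the rooting of the subproblem there is no mechanism forcing those parts to sit near the root of $H_j$'s tree-decomposition; adding the required edges from $Y$ can then destroy the simple tree-decomposition. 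The $Z_j$-and-$P_w$ construction is precisely the device that controls this, and the paper keeps it (it works under the weaker invariant above): $V(G-U-Y)$ is grouped into at most $s$ pieces $G_1, \dotsc, G_s$ (one per missed $A_j$), each $Y_j$ is joined through paths $P_w$ and $U_j$ into a single connected $Z_j$, and each contracted $G'_j$ is rooted at $(\set{z_j}, U_i : i \neq j)$---still an $r$-clique, now carrying a representative of $Y$ while omitting $U_j$. The central bag is $\set{U_1, \dotsc, U_s, Y}$ and each child's root bag omits $U_j$ but contains $Y$ (via $z_j$), so every $s$-subset of $\set{U_1, \dotsc, U_s, Y}$ lies in at most two bags and the apex invariant carries through. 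Dropping the $Z_j$ device is exactly the step that breaks your plan.
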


This result follows immediately from the next lemma, which is an analogue of \cref{JJstMainRooted} for simple treewidth. The main difference is we can no longer apply induction when $G - U$ is disconnected (pasting on the same clique can increase simple treewidth) and so we cannot assume $G - U$ is connected. The proof frequently uses the fact that for any clique $C$ in a graph $G$, any tree-decomposition of $G$ has a bag that contains $C$.

\begin{lem}\label{STWMainRooted}
    Let $s, t \geq 2$ be integers, $G$ be a $K^{\ast}_{s,t}$-minor-free graph, and $(T, \WW)$ be a tree-decomposition of $G$. Suppose that $(U_1, \dotsc, U_r)$ is a $K_r$-model of $\WW$-width at most $t - 1$ where $r \leq s$. Then $G$ has a partition $H$ of $\WW$-width at most $t - 1$ rooted at $(U_1, \dotsc, U_r)$ where $H$ has an $s$-simple tree-decomposition $(R, \BB)$. Furthermore, if $r = s$, then only one bag of $\BB$ contains all of $U_1, \dotsc, U_s$.
\end{lem}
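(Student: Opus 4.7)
The proof is by induction on $\abs{V(G)}$, following the case analysis of \cref{JJstMainRooted} while simultaneously constructing an $s$-simple tree-decomposition $(R, \BB)$ of the partition $H$ and maintaining, when $r = s$, the uniqueness of the bag of $\BB$ containing $\set{U_1, \dotsc, U_s}$. For the base case $V(G) = U$ we take $H = K_r$ with the single-node tree-decomposition whose bag is $\set{U_1, \dotsc, U_r}$. When some $A_i$ is empty, apply induction to $G - U_i$ with the model of size $r - 1 < s$ and attach a new leaf bag $\set{U_1, \dotsc, U_r}$ to any bag of the inductive decomposition containing the clique $\set{U_j : j \neq i}$; since $U_i$ appears only in this new leaf, both $s$-simplicity and (when $r = s$) the uniqueness clause are immediate.

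The main departure from \cref{JJstMainRooted} is that, for simple treewidth, we cannot freely split on connected components of $G - U$ and clique-sum the pieces back along $\set{U_1, \dotsc, U_r}$: when $r = s$ this would leave two bags each containing the $s$-clique $\set{U_1, \dotsc, U_s}$, violating both uniqueness and $s$-simplicity. We therefore do not assume $G - U$ is connected and run the hitting-set argument directly on the possibly disconnected $G - U$. With $\FF$ defined as before, the analysis yields a set $Y \subseteq V(G) \setminus U$ of $\WW$-width at most $t - 1$ such that no component of $G - U - Y$ meets every $A_i$: when $r = s$ we invoke \cref{TreeHit} using $K^{\ast}_{s, t}$-minor-freeness directly ($t$ pairwise-disjoint members of $\FF$ combined with the $K_s$-model $(U_1, \dotsc, U_s)$ give a $K^{\ast}_{s, t}$-model, since no edges among the $V(F_j)$'s are required), and when $r \leq s - 1$ we use the Helly / disjoint-subtree analysis of \cref{JJstMainRooted} suitably adapted to possibly disconnected $G - U$.

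Given $Y$ we process each component $G_j$ of $G - U - Y$ exactly as in \cref{JJstMainRooted}: choose $i'$ with $A_{i'} \cap V(G_j) = \emptyset$, build $Z_j$ connecting $Y_j = N_G(V(G_j)) \cap Y$ to $A_{i'}$, contract $Z_j$ to a vertex $z_j$ to form $G'_j$, apply induction with the size-$s$ model $(\set{z_j}, U_i : i \neq i')$, and add $U_{i'}$ back as a new leaf bag $\set{z_j, U_1, \dotsc, U_s}$ of size $s + 1$. By Helly in a width-$s$ decomposition, this new leaf is the unique bag of $H_j$ containing the $(s+1)$-clique $\set{z_j, U_1, \dotsc, U_s}$. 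Combining the $H_j$'s identifies each $U_i$ across $j$ and fuses the $z_j$'s into a single part $Y$, merging the unique $(s+1)$-leaves into a single bag $\set{Y, U_1, \dotsc, U_s}$ of the combined tree-decomposition; this is the unique bag of $\BB$ containing $\set{U_1, \dotsc, U_s}$.

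The main obstacle is verifying $s$-simplicity of the combined decomposition: an $s$-subset of $\set{Y, U_1, \dotsc, U_s}$ that contains $Y$ can in principle appear in bags of several $H_j$'s. The argument requires a strengthened inductive invariant on the exact structure of the unique bag produced at each inductive step, together with careful bookkeeping on the choice of the indices $i'_j$ across components, to guarantee that no size-$s$ subset of $V(H)$ lies in more than two bags of $\BB$.
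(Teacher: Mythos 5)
Your proposal identifies the right obstacles but does not resolve them, and the paper's fixes are concretely different from what you sketch.

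First, the disconnected-$G-U$ issue. You say to ``run the hitting-set argument directly on the possibly disconnected $G - U$'', but this leaves a hole: if some component $Q$ of $G-U$ fails to meet $A_1$ (say), then $Q$ contributes nothing to $\FF$, the hitting set $Y$ will be disjoint from $Q$, and the set $Y_j$ of $Y$-neighbours of the component $G_j = Q$ will be empty. The later argument requires $Y_j \neq \emptyset$ in order to build the connecting subgraph $Z_j$ and to obtain a sensible $G'_j$. The paper avoids this by a separate preliminary step, before the hitting-set argument: if some component $Q_1$ of $G-U$ misses some $A_i$ (say $A_1$), it inducts on $G_1 := G[(U\setminus U_1)\cup V(Q_1)]$ with root $(U_2,\dotsc,U_r)$ (an $(r-1)$-model, so $s$-simplicity is unthreatened) and on $G_2 := G - V(Q_1)$ with root $(U_1,\dotsc,U_r)$, then joins the two trees by an edge between a bag of $\BB_1$ containing $\set{U_2,\dotsc,U_r}$ and the bag of $\BB_2$ containing $\set{U_1,\dotsc,U_r}$. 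After this reduction one may assume \emph{every} component of $G-U$ meets every $A_i$, and it is exactly this assumption that makes $Y_j\neq\emptyset$ whenever $G_j\neq\emptyset$.

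Second, the $s$-simplicity of the glued decomposition, which you explicitly leave open. You propose to process each connected component $G_j$ of $G-U-Y$ separately with its own index $i'_j$; as you note, if several components choose the same $i'$, then the $s$-set $\set{U_1,\dotsc,U_s,Y}\setminus\set{U_{i'}}$ lies in the new central bag $B_x = \set{U_1,\dotsc,U_r,Y}$ and in one bag of each such $H_j$, and $s$-simplicity fails. The paper's fix is not a ``strengthened inductive invariant'' on bag structure but a simple regrouping: after choosing a minimal $Y$, it partitions the components of $G-U-Y$ into at most $r$ blocks $G_1,\dotsc,G_r$, where $G_j$ is a (possibly empty) union of components all of which miss $A_j$. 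Each non-empty block is then processed once with $i'=j$, so the indices are pairwise distinct, $B_x$ has at most $r$ neighbours in $R$, and each $s$-subset $\set{U_1,\dotsc,U_s,Y}\setminus\set{U_j}$ appears only in $B_x$ and in $B_{x_j}$. The ``furthermore'' clause of the lemma statement is precisely the inductive invariant needed to control the bags inside each $H_j$; you should use it rather than hypothesize an unspecified stronger one. As written, your proposal has genuine gaps at both of these points.
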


\begin{proof}
Let $U \coloneqq U_1 \cup \dotsb \cup U_r$. 
We proceed by induction on $\abs{V(G)}$. If $V(G) = U$, then the $(U_1, \dotsc, U_r)$ is the desired partition $H$ where $H = K_r$, $R$ is a single vertex with bag $\set{U_1, \dotsc, U_r}$.
Now assume that $V(G) \setminus U \neq \emptyset$. Let $A_i \coloneqq N_{G}(U_i) \setminus U$ for each $i$.

First suppose that some $A_i$ is empty, say $A_1 = \emptyset$. By induction, $G - U_1$ has a partition $H_1$ of $\WW$-width at most $t - 1$ rooted at $(U_2, \dotsc, U_r)$ and $H_1$ has an $s$-simple tree-decomposition $(R_1, \BB_1)$. Add a new part $U_1$ adjacent to each of $U_2, \dotsc, U_r$ to get the partition $H$. Since $\set{U_2,U_3, \dots, U_r}$ is a clique in $H$, some bag $B_x \in \BB_1$ contains all of $U_2, \dotsc, U_r$. Add a leaf $y$ adjacent to $x$ and let $B_y \coloneqq \set{U_1, \dotsc, U_r}$. This gives the desired $s$-simple tree-decomposition $(R, \BB)$ of $H$. Thus we may assume that $A_i$ is non-empty for all $i$.

Next suppose that some component of $G - U$ does not meet every $A_i$. Without loss of generality some component $Q_1$ of $G-U$ misses $A_1$. Apply induction to $G_1 \coloneqq G[U \setminus U_1 \cup V(Q_1)]$ rooted at $(U_2, \dotsc, U_r)$ to obtain a suitable partition $H_1$ and $s$-simple tree-decomposition $(R_1, \BB_1)$. Apply induction to $G_2 \coloneqq G - V(Q_1)$ rooted at $(U_1, \dotsc, U_r)$ to obtain a suitable partition $H_2$ and $s$-simple tree-decomposition $(R_2, \BB_2)$. We obtain the partition $H$ for $G$ from the disjoint union of $H_1$ and $H_2$ where the corresponding $U_i$ ($2 \leq i \leq r$) are identified. Some bag $B_x \in \BB_1$ contains all of $U_2, \dotsc, U_r$ and some bag $B_y \in \BB_2$ contains all of $U_1, \dotsc, U_r$. Let $R$ be the tree obtained from the disjoint union of $R_1$ and $R_2$ with an edge added between $x$ and $y$. Then $(R, \BB_1 \cup \BB_2)$ is an $s$-simple tree-decomposition for $H$ (note that $V(G_1) \cap V(G_2) = U \setminus U_1$). Further, if $r = s$, then only one bag of $\BB_2$ contains all of $U_1, \dotsc, U_r$, and so only one bag of $\BB$ does. Now assume that every component of $G - U$ meets every $A_i$.

We now show there exists a set $Y \subseteq V(G) \setminus U$ of $\WW$-width at most $t - 1$ such that 
\begin{equation}\label{eq:YSTW}
\textnormal{ no component of } G - U - Y  \textnormal{ meets every } A_i. \tag{$\star$}
\end{equation}
Let $\FF$ be the collection of all connected subgraphs $F$ of $G - U$ such that $V(F) \cap A_i \neq \emptyset$ for all $i$. For each $F \in \FF$, let $T_F \coloneqq T[\set{x \in V(T) \colon W_x \cap V(F) \neq \emptyset}]$. Since $F$ is connected, each $T_F$ is a (connected) subtree of $T$. 

First consider the case $r \leq s - 1$.

First suppose there exists $F_1, F_2 \in \FF$ such that $T_{F_1}$ and $T_{F_2}$ are disjoint. Let $xy$ be any edge of $T$ on the shortest path between $T_{F_1}$ and $T_{F_2}$. Then $W_x \cap W_y$ separates $V(F_1)$ and $V(F_2)$. Let $S$ be a minimal subset of $W_x \cap W_y$ that separates $V(F_1)$ and $V(F_2)$.
By construction, $S$ has $\WW$-width 1, $S \cap V(F_1) = \emptyset$, and $S \cap V(F_2) = \emptyset$. Then there is a partition $S \cup V_1 \cup V_2$ of $V(G) \setminus U$ such that $V(F_1) \subseteq V_1$, $V(F_2) \subseteq V_2$ and there is no edge between $V_1$ and $V_2$. We now show that there is a component $Q_1$ of $G[S \cup V_1]$ that contains $S \cup V(F_1)$ and a component $Q_2$ of $G[S \cup V_2]$ that contains $S \cup V(F_2)$. Consider some $s \in S$. Since $S$ is minimal, there is a path from $s$ to $V(F_1)$ internally disjoint from $S \cup V(F_2)$. Since there is no edge between $V_1$ and $V_2$, this path must lie entirely inside $S \cup V_1$. Since $F_1$ is connected, the component of $G[S \cup V_1]$ containing $s$ contains all of $S \cup V(F_1)$. Similarly for $G[S \cup V_2]$. For $j \in \set{1, 2}$, let $G_j$ be the graph obtained from $G$ by contracting all of $Q_j$ into a single vertex $v_j$ and deleting the rest of $V_j$. Each $G_j$ is a minor of $G$ and thus is $K^{\ast}_{s, t}$-minor-free. Furthermore, since $V(F_j) \subseteq Q_j$, $(U_1, \dotsc, U_r, \set{v_j})$ is a $K_{r + 1}$-model in $G_j$. Apply induction to $G_j$ rooted at $(U_1, \dotsc, U_r, \set{v_j})$ to obtain a suitable partition $H_j$ and $s$-simple tree-decomposition $(R_j, \BB_j)$. Let $H$ be obtained from the disjoint union of $H_1$ and $H_2$ where the corresponding $U_i$ are identified and the vertices $v_1$ and $v_2$ from $H_1$ and $H_2$ are identified and replaced by $S$. There is a bag $B_x \in \BB_1$ and a bag $B_y \in \BB_2$ that both contain all of $U_1, \dotsc, U_r, S$. Let $R$ be the tree obtained from the disjoint union of $R_1$ and $R_2$ with an edge added between $x$ and $y$, and let $\BB \coloneqq \BB_1 \cup \BB_2$. If  $r=s-1$ then the bags $B_x$ and $B_y$ are unique and so only two bags in $\BB$ contain all of $U_1, \dotsc, U_r, S$. Thus $(R, \BB)$ is a $s$-simple tree-decomposition of $H$.

Now assume that $T_{F_1}$ and $T_{F_2}$ intersect for all $F_1, F_2 \in \FF$. By the Helly property, there is a node $x\in V(T)$ such that $x\in V(T_{F})$ for all $F\in\FF$. Let $Y \coloneqq W_x$. Then $Y$ has $\WW$-width 1 and intersects every $F \in \FF$. Thus $G - U - Y$ contains no graph of $\FF$ and so every component of $G - U - Y$ avoids some $A_i$. This $Y$ satisfies \eqref{eq:YSTW}.

Now consider the case $r = s$.

Suppose that $\FF$ contains $t$ vertex-disjoint graphs $F_1, \dotsc, F_t$. Contracting each $U_i$ and each $F_j$ to a vertex gives a $K^{\ast}_{s,t}$-minor in $G$. Hence, there are no $t$ vertex-disjoint graphs in $\FF$. 
For any $F_1, F_2 \in \FF$, if $T_{F_1}$ and $T_{F_2}$ are disjoint, then $F_1$ and $F_2$ are disjoint. So $\set{T_F \colon F \in \FF}$ contains no $t$ pairwise disjoint subtrees. Thus, by \cref{TreeHit}, there is a set $S \subseteq V(T)$ of size at most $t - 1$ that meets every $T_F$. Let $Y \coloneqq \bigcup_{x \in S} W_x$. Then $Y$ has $\WW$-width at most $t - 1$ and intersects every $F \in \FF$. This $Y$ satisfies \eqref{eq:YSTW}.

We have shown in all cases that there exists $Y \subseteq V(G) \setminus U$ satisfying \eqref{eq:YSTW}. Take a minimal such $Y$. Since $Y$ satisfies \eqref{eq:YSTW} there are induced subgraphs $G_1, \dotsc, G_r$ of $G - U - Y$ such that:
\begin{itemize}[noitemsep]
    \item each $G_j$ is a union of components of $G - U - Y$,
    \item $G_j$ does not meet $A_j$ for all $j$,
    \item every vertex of $G - U - Y$ is in exactly one $G_j$.
\end{itemize}
Let $Y_j$ be the set of vertices $w \in Y$ that have neighbours in $G_j$. We first show that if $G_j$ is non-empty, then so is $Y_j$. If not, then there is some $j$ for which $G_j$ is non-empty and there are no edges between $Y$ and $V(G_j)$. But then $G_j$ is a union of components in $G - U$. We showed above that every component of $G - U$ meets every $A_i$, so $A_j$ meets $G_j$, which is a contradiction.

We now only consider those $j$ with $G_j$ (and so $Y_j$) non-empty. We claim that for each $w \in Y_j$ there is a path $P_w$ from $w$ to $A_j$ that avoids $U \cup V(G_j)$. By the minimality of $Y$, some component $Q$ of $G - U - (Y \setminus \set{w})$ meets every $A_i$. Since $Y$ satisfies \eqref{eq:YSTW}, $w$ is a cut-vertex of $Q$. Now $Q$ meets $A_j$ and $w$ is adjacent to some vertex of $G_j$ so there is a path from $A_j$ to $V(G_j)$ in $Q$. There is no such path in $Q - w$, so there is some path $P_w$ from $A_j$ to $w$ in $Q$ that avoids $V(G_j)$. Also $P_{w}$ is in $Q$, so $P_{w}$ avoids $U$. Hence, $P_{w}$ has the required properties. Let $Z_j$ be the subgraph induced by the union of $U_{j}$ and all $P_{w}$ (where $w \in Y_j$). By construction, $Z_j$ is connected and disjoint from $V(G_j) \cup (U \setminus U_{j})$.

Take the subgraph of $G$ induced by $V(G_j) \cup Z_j \cup U$ and contract $Z_j$ into a new vertex $z_j$. Call the graph obtained $G'_j$, which has vertex-set $V(G_j) \cup (U \setminus U_{j}) \cup \set{z_j}$. Now $(\set{z_j},U_i \colon i \neq j)$ is a $K_r$-model in $G'_j$. Apply induction to $G'_j$ rooted at this $K_r$-model to obtain a suitable partition $H_j$ and $s$-simple tree-decomposition $(R_j, \BB_j)$. Let $H$ be obtained from the disjoint union of the $H_j$ where corresponding $U_i$ are identified, and the $z_j$ are identified and replaced by $Y$. This gives a partition of $G$ exactly as in the proof of \cref{JJstMainRooted}.

For each $j$, there is a bag $B_{x_j} \in \BB_j$ that contains all of $U_1, \dotsc, U_{j - 1}, U_{j + 1}, \dotsc, U_r, Y$. Let $R$ be the tree obtained from the disjoint union of the $R_j$ by adding a vertex $x$ adjacent to all the $x_j$. Let $B_x \coloneqq \set{U_1, \dotsc, U_r, Y}$ and define $\BB \coloneqq \set{B_x} \cup \bigcup_j \BB_j$. Since the only common neighbours of vertices in different $G_j$ are in $U \cup Y$, this is a tree-decomposition of $H$. If $r < s$, then, since each $(R_j, \BB_j)$ is $s$-simple, so is $(R, \BB)$. Finally, suppose that $r = s$. Consider $U_1, \dotsc, U_{j - 1}, U_{j + 1}, \dotsc, U_s, Y$: $B_{x_j}$ and $B_x$ are the only two bags of $\BB$ that contain all these sets. Finally, $B_x$ is the only bag containing all of $U_1, U_2, \dotsc, U_s$. In particular, $\BB$ is $s$-simple and satisfies the required properties.
\end{proof}

The next result is an analogue of \cref{JJstLtw} for simple treewidth, and is proved in the same way as \cref{JJstLtw}, using \cref{STWMain} instead of \cref{JJstMain}.

\begin{thm}
\label{KstLtw}
For all integers $s, t \geq 2$, every $K^{\ast}_{s, t}$-minor-free graph $G$ is isomorphic to a subgraph of $H \boxtimes P \boxtimes K_m$, where $P$ is a path, $\stw(H) \leq s$, and $m \coloneqq (t - 1) \ltw(G)$.
\end{thm}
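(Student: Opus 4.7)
The plan is to mirror the proof of \cref{JJstLtw} step by step, simply replacing the invocation of \cref{JJstMain} by \cref{STWMain}. The key observation is that \cref{ProductLayering} is completely indifferent to the internal structure of the partition graph $H$: it only needs that each part of $H$ intersects each layer of $\LL$ in at most $m$ vertices. Whatever treewidth-like bound is inherited from the construction of $H$ is preserved in the final product, whether it is $\tw(H) \leq s$ or $\stw(H) \leq s$.

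First, I would fix a layering $\LL$ and tree-decomposition $(T,\WW)$ of $G$ that jointly witness $\ltw(G)$, i.e.\ $\abs{L \cap W} \leq \ltw(G)$ for every layer $L \in \LL$ and every bag $W \in \WW$. Since $G$ is $K^{\ast}_{s,t}$-minor-free, applying \cref{STWMain} to $(T,\WW)$ yields a partition $H$ of $G$ with $\WW$-width at most $t-1$ and $\stw(H) \leq s$.

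Next, I would bound $\abs{X \cap L}$ for each part $X \in V(H)$ and each layer $L \in \LL$. By the $\WW$-width bound, there exist at most $t-1$ bags $W_1, \dotsc, W_{t-1} \in \WW$ with $X \subseteq W_1 \cup \dotsb \cup W_{t-1}$, and each $\abs{W_i \cap L} \leq \ltw(G)$, so $\abs{X \cap L} \leq (t-1)\ltw(G) = m$. Finally, \cref{ProductLayering} then supplies a path $P$ such that $G$ embeds as a subgraph of $H \boxtimes P \boxtimes K_m$, and the conclusion $\stw(H) \leq s$ is carried over unchanged from \cref{STWMain}.

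There is essentially no obstacle here beyond noting that the substitution is legal: one might worry whether the tree-decomposition underlying the simple-treewidth bound on $H$ interacts with the partition structure in some nontrivial way, but \cref{ProductLayering} is a statement about partitions and layerings only, so the $s$-simple tree-decomposition of $H$ produced by \cref{STWMain} is simply carried along. Thus the proof of \cref{KstLtw} is a routine adaptation of the proof of \cref{JJstLtw}, with \cref{STWMain} in place of \cref{JJstMain}.
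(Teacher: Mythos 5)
Your proof is correct and is exactly the argument the paper has in mind: the paper states that \cref{KstLtw} is proved the same way as \cref{JJstLtw} but with \cref{STWMain} substituted for \cref{JJstMain}, which is precisely what you do. The observation that \cref{ProductLayering} is agnostic to the internal structure of $H$, so the $\stw(H) \leq s$ bound passes through unchanged, is the key point and you have identified it.
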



Taking $s = 3$ in \cref{KstLtw} show that for all $t$ there is an $m$ such that every $K_{3,t}^{\ast}$-minor-free graph is isomorphic to a subgraph of $H \boxtimes P \boxtimes K_m$ where $P$ is a path and $H$ is planar with treewidth at most $3$. This has an application to $p$-centred colouring. \Citet[Thm.~6]{DFMS21} showed that if $H$ is planar with treewidth at most 3, then $\chi_p(H) = \OO(p^2 \log{p})$. Using this and $\chi_p(H \boxtimes P \boxtimes K_m) \leq m(p + 1)\chi_p(H)$ shows that every $K_{3, t}^{\ast}$-minor-free $G$ has $\chi_p(G) = \OO(p^3 \log p)$. This improves \cref{pcentred} which gives $\chi_p(G) = \OO(p^4)$.

Applying the same approach as in the proof of \cref{STWMainRooted} establishes the following analogue of \cref{JJstSqrt} for simple treewidth. We omit the details.

\begin{thm}
Let $s, t, n$ be positive integers and define
\begin{equation*}
    m \coloneqq \begin{cases}
    \max\set{t - 1, 1} & \textnormal{if } s = 1 \textnormal{ or } 2, \\
    \sqrt{(s - 2)n} & \textnormal{if } s \geq 3 \textnormal{ and } t = 1, \\
    2 \sqrt{(s - 1)(t - 1)n} & \textnormal{otherwise}.
    \end{cases}
\end{equation*}
Then every $K^{\ast}_{s, t}$-minor-free graph $G$ on $n$ vertices is isomorphic to a subgraph of $H \boxtimes K_{\floor{m}}$ for some graph $H$ of simple treewidth at most $s$.
\end{thm}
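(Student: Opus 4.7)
The plan is to combine the inductive strategy of \cref{JJstSqrtRooted} with the simple tree-decomposition bookkeeping developed in \cref{STWMainRooted}. Concretely, I would formulate and prove a rooted analogue: if $G$ is a $K^{\ast}_{s,t}$-minor-free graph on $n$ vertices and $(U_1, \dotsc, U_r)$ is a $K_r$-model with $r \leq s$ and $\abs{U_i} \leq m$ for all $i$, then $G$ has a partition $H$ of width at most $m$ rooted at $(U_1, \dotsc, U_r)$, together with an $s$-simple tree-decomposition $(R, \BB)$ of $H$; furthermore, if $r = s$, then only one bag of $\BB$ contains all of $U_1, \dotsc, U_s$. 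This is the right inductive statement because when we glue partitions along a clique (which is essentially the only way simple treewidth can degrade), having a \emph{unique} top bag lets us attach things without introducing a second bag that sees a size-$s$ clique.

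The induction proceeds on $\abs{V(G)}$. The base case $V(G) = U$ is trivial. The two reductions from the proof of \cref{STWMainRooted} carry over verbatim: (i) if some $A_i$ is empty, recurse on $G - U_i$ and attach a leaf bag $\set{U_1, \dotsc, U_r}$ to any bag containing the clique $\set{U_j : j \neq i}$; (ii) if some component of $G - U$ misses some $A_i$, peel it off, apply induction to the two pieces (one rooted at $(U_j : j \neq i)$ and the other at $(U_1, \dotsc, U_r)$), and glue their tree-decompositions by adding an edge between bags that contain the shared clique. In both reductions the uniqueness condition for $r = s$ is preserved, since only the piece rooted at all of $U_1, \dotsc, U_s$ can contribute such a bag.

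So we may assume every $A_i$ is non-empty and every component of $G - U$ meets every $A_i$. Now I would produce a set $Y \subseteq V(G) \setminus U$ of width at most $m$ such that no component of $G - U - Y$ meets every $A_i$, exactly as in \cref{JJstSqrtRooted}: the small cases $s \in \set{1, 2}$ are handled by ad hoc arguments using Menger and direct contraction (a $K^{\ast}_{s,t}$-minor would appear otherwise); the case $s \geq 3$, $t = 1$, $r = s$ is ruled out directly since contracting each $U_i$ and all of $G - U$ would give $K_{s+1} = K^{\ast}_{s,1}$; for $r \leq s - 1$ apply \cref{FindTree} to $G - U$ with $x = \sqrt{(s-2)n}$, appealing to the maximality of $r$ to rule out outcome (a); for $r = s$ and $t \geq 2$ apply \cref{FindTrees} with $\ell = t$ and $x = \sqrt{(s-1)n/(t-1)}$, where outcome (a) would produce $t$ disjoint connected subgraphs each hitting every $A_i$ and hence, after contracting, a $K^{\ast}_{s,t}$-minor. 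Given a minimal such $Y$, decompose $G - U - Y$ into induced subgraphs $G_j$ (each missing some $A_j$), construct the auxiliary connected subgraphs $Z_j$ from $U_j$ together with paths $P_w$ for $w \in Y_j$ by minimality of $Y$, contract $Z_j$ to a vertex $z_j$ to form $G'_j$ with $K_r$-model $(\set{z_j}, U_i : i \neq j)$, apply induction to each $G'_j$, and combine partitions by identifying the $U_i$'s and replacing the $z_j$'s by $Y$.

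To combine the tree-decompositions $(R_j, \BB_j)$ I would introduce a new bag $B_x \coloneqq \set{U_1, \dotsc, U_r, Y}$, and attach $R_j$ at the (unique, by the induction hypothesis) bag $B_{x_j} \in \BB_j$ containing $\set{z_j} \cup \set{U_i : i \neq j}$, which after substitution becomes $\set{Y} \cup \set{U_i : i \neq j}$. I then need to verify two things. First, that $(R, \BB)$ is $s$-simple: any size-$s$ set lying in bags of different $R_j$ subtrees must be a subset of $B_x$, and the uniqueness clause from induction (applied inside each $R_j$) limits duplication to exactly two bags in the worst case. Second, that when $r = s$ the uniqueness hypothesis is preserved: the set $\set{U_1, \dotsc, U_s}$ is contained only in $B_x$, because no individual $H_j$ contains all $s$ of the $U_i$'s as parts (it is rooted at $(\set{z_j}, U_i : i \neq j)$). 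This last bookkeeping step is the main obstacle: one must be careful that replacing all $z_j$'s by the single part $Y$ and identifying $U_i$'s across $H_j$'s does not create a second bag containing the full size-$s$ clique, which is precisely why the induction hypothesis was strengthened with the uniqueness clause.
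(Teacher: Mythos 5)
The proposal is correct and takes essentially the same approach as the paper: the paper itself gives no details, merely stating that the theorem follows by ``applying the same approach as in the proof of \cref{STWMainRooted}'', and your proposal is precisely the natural synthesis---the hitting-set machinery and case analysis of \cref{JJstSqrtRooted} combined with the simple-tree-decomposition bookkeeping (peel off components missing some $A_i$, carry a uniqueness clause for the top bag when $r=s$, attach the child decompositions at the designated bags via a new bag $\set{U_1,\dotsc,U_r,Y}$) of \cref{STWMainRooted}.
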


\end{document}